\title{On the relative version of Mori dream spaces}
\author{Rikito Ohta}
\address{Department of Mathematics,
Graduate School of Science,
Osaka University,
Machikaneyama 1-1,
Toyonaka,
Osaka,
560-0043,
Japan.
}
\email{usamarotokame@gmail.com}
\date{\today}
\newcommand{\Nef}{\operatorname{Nef}}
\newcommand{\Eff}{\operatorname{Eff}}
\newcommand{\N}{\operatorname{N}}
\newcommand{\R}{\operatorname{R}}
\newcommand{\Exc}{\operatorname{Exc}}
\newcommand{\NE}{\operatorname{NE}}
\newcommand{\relint}{\operatorname{relint}}
\newcommand{\B}{\operatorname{\bold B}}
\newcommand{\Z}{\operatorname{\bold Z}}
\newcommand{\ord}{\operatorname{ord}}
\newcommand{\Ample}{\operatorname{Ample}}
\newcommand{\excep}{\operatorname{excep}}
\begin{document}
\maketitle

\begin{abstract}
This paper is devoted to a study of the relative version of a Mori dream space (MDS for short), which was first introduced by Andreatta and Wi\'{s}newski and will be called Mori dream morphism (MDM) in this paper.

For MDMs we prove that the relative MMP for an arbitrary divisor $D$ runs and terminates in either a relative good minimal model or a relative Mori fiber space, and that an algebraic fiber space $X \to U$ satisfying $\Pic(X/U)_{\bQ} \simeq \N ^1(X/U)_{\bQ}$ is an MDM if and only if a Cox sheaf is finitely generated over $U$. We also show that if the composition of two algebraic fiber spaces $f$ and $g$ is an MDM, then so are $f$ and $g$.
We also give a sufficient condition for the property of being an MDM to be a preserved under base change.
\end{abstract}

\tableofcontents

\section{Introduction}
Hu and Keel introduced in \cite{MR1786494} the notion of Mori dream space (often abbreviated as MDS).
Mori dream spaces are projective $\bQ$-factorial varieties which satisfy certain good conditions for line bundles on them.
In the same paper, they proved that an MMP runs for any divisor on an MDS and terminates either in a good minimal model or a Mori fibre space.
They also proved
that the $\bQ$-factorial variety $X$ such that $\Pic(X)_{\bQ} \simeq \N ^1 (X)_{\bQ}$ is an MDS if and only if its Cox ring is finitely generated, as an application of the theory of VGIT (Variation of Geometric invariant Theory Quotients).
There are many interesting examples of MDSs.
For example, toric varieties, Fano varieties in characteristic $0$, and K$3$ surfaces whose automorphism groups are finite are MDSs (see \cite{MR2601039} and \cite{MR2660680}).

%

On the other hand, the relative version of an MDS over a normal affine variety $U$ is introduced in \cite{MR3158045}
(it is called a \emph{relative MDS} in \cite{MR3158045}).
It is defined as an algebraic fibre space $\pi \colon X \to U$ satisfying some conditions for line bundles on $X$ relative to $\pi$ (for details, see \pref{df:MDM}),
and is nothing but an MDS if $U$ is a point.
We will call such a morphism  a \emph{Mori dream morphism} (MDM for short) in this paper.
In \cite{MR3158045}, they prove that a $4$-dimensional local symplectic contraction is an MDM and
study explicitly the structure of its relative movable cone for a concrete example.
%
In this paper, we extend the definition of MDM to a slightly more general setting 
(which is given in Section 3) and study it systematically.
First we show that the natural generalization of the results for MDSs in \cite[1.11.PROPOSITION]{MR1786494} hold for MDMs.
In particular, we prove the following theorem.
\begin{theorem}[$=$ \pref{th:MMP}]\label{th:int_thm_MMP}
Let $\pi \colon X \to U$ be an MDM and $D$ be a divisor on $X$.
Then there exists an MMP for $D$ over $U$ and
it terminates either in a Mori fibre space or 
a good minimal model (i.e., a model on which the strict transform of the divisor $D$ is semiample over $U$).
\end{theorem}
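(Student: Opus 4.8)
The plan is to transplant the proof of Hu and Keel's \cite[1.11.PROPOSITION]{MR1786494} into the relative setting, the essential input being that the MDM hypothesis forces the relative Cox sheaf to be finitely generated over $U$. From finite generation one should extract the combinatorial skeleton needed to run the program: there are only finitely many small $\bQ$-factorial modifications $X_{1},\dots,X_{r}$ of $X$ over $U$, their relative nef cones $\Nef(X_{i}/U)$ tile the movable cone $\overline{\operatorname{Mov}}(X/U)$, and the whole relative effective cone $\overline{\Eff}(X/U)$ admits a decomposition into finitely many rational polyhedral chambers, each the closure of a cone of the form $g^{*}\Nef(Y/U)+\langle\,\text{$g$-exceptional divisors}\,\rangle$ for a rational contraction $g\colon X\dashrightarrow Y$ over $U$. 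I would establish this chamber decomposition first, since everything downstream is a matter of locating $[D]\in\N^{1}(X/U)$ within it.

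Granting this, I would run the program by reading off the model that the position of $[D]$ determines. Suppose first that $[D]\in\overline{\Eff}(X/U)$. Then $[D]$ lies in (the closure of) a chamber attached to some rational contraction $g\colon X\dashrightarrow Y$ over $U$, and the strict transform $g_{*}D$ is nef on $Y$, hence, by finite generation of its relative section ring, semiample over $U$. The substantive task is to factor $g$ as
\[
X=X^{0}\dashrightarrow X^{1}\dashrightarrow\cdots\dashrightarrow X^{s}=Y,
\]
in which each arrow is a divisorial contraction or a flip, and to verify that every arrow is $D$-negative: the divisorial steps peel off the $g$-exceptional (fixed) part recorded in the chamber, while the small steps correspond to crossing the interior walls between the tiles $\Nef(X_{i}/U)$ that a segment running toward the nef chamber meets. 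This exhibits the desired MMP for $D$ over $U$, it terminates because there are only finitely many chambers, and, as $g_{*}D$ is semiample over $U$, the endpoint is a \emph{good} minimal model.

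If instead $[D]\notin\overline{\Eff}(X/U)$, no minimal model can exist and the program must end in a fibration. Here I would fix an ample class $A$ over $U$ and follow the segment from an interior point of $\Ample(X/U)$ toward $[D]$. It crosses finitely many walls, producing $D$-flips and $D$-divisorial contractions exactly as above, until it meets the boundary $\partial\overline{\Eff}(X/U)$ along a supporting face whose associated contraction over $U$ is of fibre type; a relative Kleiman-type positivity argument then identifies the resulting model as a relative Mori fibre space, completing the dichotomy.

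The main obstacle is the factorization in the second paragraph: checking that the decomposition of the rational contraction $g$ into elementary maps consists precisely of steps of the $D$-MMP, that is, that each wall-crossing is a \emph{single} $D$-negative flip or divisorial contraction rather than a more degenerate modification, and that $\bQ$-factoriality is preserved throughout. One must also ensure that all the cones and the chamber decomposition behave well relatively when $U$ is not assumed affine, so that the local-over-$U$ statements (semiampleness, finite generation of the relevant section rings) glue correctly. Termination, by contrast, is immediate from the finiteness of the chamber decomposition and is not a source of difficulty.
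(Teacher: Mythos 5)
Your plan is circular within the logical structure of this paper (and of Hu--Keel). You take as ``the essential input'' that the MDM hypothesis forces the relative Cox sheaf to be finitely generated, and from that you want to extract the Mori chamber decomposition of $\Eff(X/U)$ and then read off the MMP. But in the paper the implications run in exactly the opposite direction: the chamber decomposition of the effective cone (\pref{cr:mori_chamber}) and the finite generation of section algebras (\pref{cr:section_ring_f_g}) are \emph{corollaries} of \pref{th:MMP}, and the finite generation of the Cox sheaf (\pref{pr:Cox_f_g}) is in turn deduced from those corollaries; the same ordering occurs in Hu--Keel, where the Cox-ring characterization (their 2.9) rests on their 1.11. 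You give no independent derivation of finite generation from the MDM axioms, so the ``essential input'' is precisely what is not yet available when proving this theorem. (Also note that the finiteness of SQMs and the tiling $\Mov(X/U)=\bigcup_i g_i^*(\Nef(X_i/U))$ need not be ``extracted'' from anything: they are condition (4) of \pref{df:MDM}.)

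What the paper actually does uses only the MDM axioms: induction on $\rho(X/U)$, where a $D$-negative elementary contraction exists because $\Nef(X/U)$ is rational polyhedral, flips of small contractions exist among the finitely many SQMs by the fan structure on the movable cone (\pref{pr:Flip_existence}, resting on \pref{th:mov_cont} and \pref{pr:small_fan}), and targets of divisorial contractions are again MDMs (\pref{pr:divisorial}) so induction applies. Your final claim that ``termination is immediate from the finiteness of the chamber decomposition'' is also too quick in your setup: finiteness of models alone does not exclude a cycle of flips returning to the same SQM. The paper rules this out by showing that for a suitable divisor $E$ over $X$ the quantity $\ord_E(D_i)$ is nondecreasing along the sequence and strictly increases at each flip (the argument of \cite[Lemma 3.38]{MR1658959}), so no composition of flips is an isomorphism; some monotone invariant of this kind (or a genuine segment/scaling argument, which you gesture at but do not set up for the effective case) is needed. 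The obstacle you flag yourself --- that each wall-crossing of your factorization is a single $D$-negative elementary step preserving $\bQ$-factoriality --- is exactly the content the paper supplies through \pref{pr:Flip_existence} and \pref{pr:divisorial}, so as written the proposal leaves both the input and the core steps unestablished.
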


Other statements of \cite[1.11.PROPOSITION]{MR1786494} including
the Mori chamber decomposition of the effective cone
are also generalized in this paper (see \pref{th:mov_cont},  
\pref{cr:section_ring_f_g}, and \pref{cr:mori_chamber}).



We also prove the following characterization of MDM via Cox sheaf as in the case of MDS proved in \cite[2.9. PROPOSITION]{MR1786494}.

\begin{theorem}\label{th:int_mdm_cox}
Let $\pi \colon X \to U $ be an algebraic fibre space between normal quasi-projective varieties.
Assume that $X$ is $\bQ$-factorial and $\Pic(X/U)_{\bQ} \simeq \N^1 (X/U)_{\bQ}$.
Then $X$ is an MDM if and only if its Cox sheaf is a finitely generated $\cO_U$-algebra.
\end{theorem}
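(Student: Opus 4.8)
The plan is to adapt to the relative setting the Variation of GIT (VGIT) argument used by Hu and Keel in the proof of \cite[2.9.PROPOSITION]{MR1786494}. Since finite generation of the Cox sheaf as an $\cO_U$-algebra is an affine-local condition on the base, I would first reduce to the case where $U = \operatorname{Spec} A$ is affine (covering a general quasi-projective $U$ by such opens and gluing the constructions at the end, as in the original relative framework of \cite{MR3158045}). Over $\operatorname{Spec} A$ the Cox sheaf becomes an $\N^1(X/U)$-graded $A$-algebra $R = \bigoplus_{D} \Gamma\bigl(\operatorname{Spec} A, \pi_* \cO_X(D)\bigr)$, and the assertion to prove locally is that $R$ is a finitely generated $A$-algebra if and only if $\pi$ is an MDM in the sense of \pref{df:MDM}.

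For the implication that an MDM has finitely generated Cox sheaf, I would use the finiteness statements already established in the excerpt. By \pref{cr:mori_chamber} the relative effective cone $\Eff(X/U)$ decomposes into \emph{finitely many} Mori chambers, and by \pref{cr:section_ring_f_g} the relative section ring attached to each chamber is a finitely generated $A$-algebra. I would then build a finite generating set for $R$ from these data: choose finitely many classes generating $\N^1(X/U)$ together with algebra generators of the section rings of the finitely many top-dimensional chambers, and check that the associated sections generate $R$. It is precisely the finiteness of the chamber decomposition that makes this assembly terminate.

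For the converse, assume $R$ is a finitely generated $A$-algebra. Then the N\'eron--Severi torus $T = \operatorname{Hom}(\N^1(X/U), \mathbb{G}_m)$ acts on $\operatorname{Spec} R$, and I would run relative VGIT for this action over $\operatorname{Spec} A$. Each class in $\N^1(X/U)_{\bQ}$ determines a $T$-linearization and hence a GIT quotient projective over $\operatorname{Spec} A$; the quotient attached to an ample class recovers $\pi \colon X \to U$ itself (this is where $\bQ$-factoriality and $\Pic(X/U)_{\bQ} \simeq \N^1(X/U)_{\bQ}$ enter), while the quotients attached to classes in the interior of the relative movable cone $\operatorname{Mov}(X/U)$ recover the small $\bQ$-factorial modifications of $X$ over $U$. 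The resulting wall-and-chamber structure is finite and rational polyhedral, its chambers are the Mori chambers, and on each quotient the relative nef cone is rational polyhedral and spanned by semiample classes (the pullbacks of the ample classes on the quotients). Reading off these data yields exactly the defining conditions of \pref{df:MDM}, so $\pi$ is an MDM.

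The hard part will be the relative VGIT step. Over a point this is the Dolgachev--Hu and Thaddeus machinery, but here the quotients must be formed over the affine base $\operatorname{Spec} A$, and I would have to verify that (semi)stability and the quotient construction behave well relative to $U$: that the GIT quotients are genuinely projective over $\operatorname{Spec} A$, that the quotient for an ample linearization is \emph{canonically} identified with $X$, and that the wall-and-chamber decomposition is compatible with the relative movable cone. A secondary technical point is checking that these affine-local constructions glue to recover the MDM structure over a general quasi-projective $U$. Once this relative GIT dictionary is in place, the matching of GIT chambers with Mori chambers is formal and proceeds exactly as in the absolute case where $U$ is a point.
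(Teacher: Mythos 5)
Your proposal matches the paper's own proof in both structure and substance: the ``only if'' direction via the finiteness of the Mori chamber decomposition together with finite generation of the section algebras on each chamber (the paper's \pref{pr:Cox_f_g}, resting on \pref{cr:mori_chamber} and the relative Zariski lemma), and the ``if'' direction via relative VGIT for the N\'eron--Severi torus acting on $\Spec_U$ of the Cox sheaf, where the quotient at an ample linearization recovers $X$, the quotients over the movable cone give the SQMs, and GIT chambers are matched with Mori chambers (the paper's \pref{th:geom_quot_mdm}, \pref{pr:MDM_is_the_quotient}, and \pref{cr:quotient_satisfies_}). Even the technical point you flag as the hard part --- setting up semistability, quotients, and the wall-and-chamber decomposition affine-locally over $U$ and gluing --- is exactly what the paper develops in Section 6.1 (\pref{df:stability}, \pref{lm:well_defind_for_ssl}, \pref{cr:rel_property_ssloci}).
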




As we generalized the notion of MDSs to a property of morphisms, it is natural to investigate its behavior under basic categorical operations.
In general the composition of two MDMs is not necessary an MDM.
A typical example is the blowing-up of $\bP^2$ in nine general points (see \pref{eg:blow-up_MDM}).
On the other hand we will show the following theorem, which says that 
if the composition of two algebraic fibre spaces is an MDM, then both of them are also MDMs.

\begin{theorem}\label{th:intro_composition}
Let $f \colon X \to Y$ be an algebraic fibre space
between normal $\bQ$-factorial quasi-projective varieties.
Suppose that
$\pi_1 \colon X \to U $ 
and
$\pi_2 \colon Y \to U$
are algebraic fibre spaces to a quasi-projective normal variety $U$
satisfying the following commutative diagram.
\begin{align*}
 \xymatrix@ul{
   & {U}  & X\ar[l]_{\pi_1} \ar[dl]^{f} \\
   & Y  \ar[u]^{\pi_2}
  }
\end{align*}
If $\pi_1$ is an MDM, then both $\pi_2$ and $f$ are MDMs.
\end{theorem}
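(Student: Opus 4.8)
The plan is to deduce both statements from the Cox-sheaf characterization \pref{th:int_mdm_cox}. Throughout I write $\mathcal{C}_X=\bigoplus_{[D]\in\Pic(X/U)}\cO_X(D)$ for the Cox sheaf on $X$ and $\mathcal{R}(X/U)=\pi_{1*}\mathcal{C}_X$ for the associated $\Pic(X/U)$-graded $\cO_U$-algebra, which is finitely generated because $\pi_1$ is an MDM. The first preliminary step is to record that, since $f$ is an algebraic fibre space, the pullback $f^{*}\colon\Pic(Y/U)\to\Pic(X/U)$ is injective and fits into a right-exact sequence $\Pic(Y/U)\xrightarrow{f^{*}}\Pic(X/U)\to\Pic(X/Y)\to 0$, compatibly with the analogous sequence for $\N^1$. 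Because a $\pi_1$-numerically trivial class can pull back from a $\pi_2$-numerically trivial one only if the latter already vanishes in $\Pic(X/U)_{\bQ}\simeq\N^1(X/U)_{\bQ}$, a short diagram chase yields $\Pic(Y/U)_{\bQ}\simeq\N^1(Y/U)_{\bQ}$ and $\Pic(X/Y)_{\bQ}\simeq\N^1(X/Y)_{\bQ}$, so that \pref{th:int_mdm_cox} becomes applicable to both $\pi_2$ and $f$.

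For $\pi_2$ I would identify its Cox sheaf inside that of $\pi_1$. By the projection formula together with $f_{*}\cO_X=\cO_Y$ one has $\pi_{2*}\cO_Y(E)=\pi_{1*}\cO_X(f^{*}E)$ for every $E$, so that $\mathcal{R}(Y/U)$ is isomorphic to the graded $\cO_U$-subalgebra $\bigoplus_{h\in H}\mathcal{R}(X/U)_{h}$ of $\mathcal{R}(X/U)$ supported on the subgroup $H:=f^{*}\Pic(Y/U)$. The workhorse is then a finite-generation statement for subgroup-graded subalgebras: if a $G$-graded algebra that is finitely generated over a Noetherian ring is given, with $G$ a finitely generated abelian group, then for every subgroup $H\le G$ the $H$-graded piece is again finitely generated. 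This is proved by writing algebra generators in fixed degrees $d_1,\dots,d_m$ and applying Gordan's lemma to the submonoid of tuples of non-negative integers $(e_1,\dots,e_m)$ with $\sum e_i d_i\in H$: its finitely many monoid generators yield finitely many algebra generators of the subalgebra. Hence $\mathcal{R}(Y/U)$ is a finitely generated $\cO_U$-algebra and $\pi_2$ is an MDM by \pref{th:int_mdm_cox}.

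For $f$ the same mechanism is available one level lower: after choosing a section of $\Pic(X/U)\to\Pic(X/Y)$ one identifies $\pi_{2*}\mathcal{R}(X/Y)$ with the ``$H$-degree-zero'' subalgebra $\bigoplus_{k}\mathcal{R}(X/U)_{k}$, which is finitely generated over $\cO_U$ by the same Gordan argument. The substance of the theorem, and the step I expect to be the main obstacle, is to upgrade this to finite generation of $\mathcal{R}(X/Y)$ as an $\cO_Y$-algebra: finite generation of $\pi_{2*}\mathcal{A}$ over $\cO_U$ does \emph{not} descend automatically to finite generation of $\mathcal{A}$ over $\cO_Y$ along the projective morphism $\pi_2$, since this is precisely where relative section rings may fail to be finitely generated. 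To overcome it I would not descend formally but instead exploit the full birational structure of the MDM $\pi_1$.

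Concretely, finite generation of $\mathcal{R}(X/Y)$ is local on $Y$ and, via the Gordan argument once more, reduces to finite generation of the relative section rings $\bigoplus_{m\ge 0}f_{*}\cO_X(mD)$ over $\cO_Y$ for finitely many divisors $D$. For each such $D$ the relative MMP over $U$ of \pref{th:int_thm_MMP} produces, after finitely many small $\bQ$-factorial modifications, a model on which the strict transform of $D$ is semiample over $U$, hence a fortiori semiample over $Y$; combined with the finite generation of these section rings over $\cO_U$ furnished by \pref{cr:section_ring_f_g} and the finiteness of the set of modifications and chambers that must be controlled, this delivers finite generation over $\cO_Y$. A final application of \pref{th:int_mdm_cox} then shows that $f$ is an MDM, completing the proof.
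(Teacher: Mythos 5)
Your treatment of $\pi_2$ is correct and is essentially the paper's own argument (\pref{pr:surj_MDM}): by the projection formula and $f_*\cO_X=\cO_Y$, the Cox sheaf of $Y/U$ is the $f^*\Pic(Y/U)$-graded part of the Cox sheaf of $X/U$, and a subgroup-graded subalgebra of a finitely generated graded algebra is finitely generated --- the paper gets this by taking invariants of a torus (diagonalizable group) action, your Gordan's lemma argument is the same mechanism in combinatorial clothing --- after which \pref{th:int_mdm_cox} applies (here your ``diagram chase'' for $\Pic(Y/U)_{\bQ}\simeq\N^1(Y/U)_{\bQ}$ is genuinely formal, using \pref{lm:known_inj}).

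The treatment of $f$, however, has genuine gaps; note that the paper deliberately does \emph{not} use Cox sheaves there, but proves \pref{pr:betweeen_MDS} by a direct cone argument. First, $\Pic(X/Y)_{\bQ}\simeq\N^1(X/Y)_{\bQ}$ is not a diagram chase: a divisor $D$ with $D\equiv_Y 0$ only gives a class in $\NE(f)^{\perp}\subset\N^1(X/U)_{\bQ}$, and you need the non-formal equality $\NE(f)^{\perp}=f^*\N^1(Y/U)_{\bQ}$, which rests on $\pi_1$-semiampleness of nef classes plus the rigidity lemma (\pref{rm:pullback_face}); the paper sidesteps this by verifying condition $(2')$ of \pref{pr:MDM2} for $f$ and letting $(2')\Rightarrow(2)$ do the work. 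Second, your route to finite generation of $\mathcal{R}(X/Y)$ over $\cO_Y$ breaks at two points. (a) Gordan's lemma can only cut a finitely generated multigraded algebra \emph{down} to subgroup- or submonoid-graded subalgebras; it cannot reduce finite generation of the multigraded Cox sheaf to finite generation of finitely many single-divisor section rings $\bigoplus_{m\ge 0}f_*\cO_X(mD)$, since multiplication maps between distinct degrees are far from surjective. The paper's actual reduction (\pref{pr:Cox_f_g}) goes through the Mori chamber decomposition together with the multigraded Zariski lemma \pref{lm:zariski}, and running that over $Y$ presupposes the chamber/fan structure of $\Mov(X/Y)$ and $\NE^1(X/Y)$ --- i.e.\ essentially the statement you are trying to prove. (b) Even for a single $D$, the $D$-MMP over $U$ from \pref{th:int_thm_MMP} exits the category of varieties over $Y$: its divisorial contractions and flips may contract curves that $f$ does not contract, so the terminal model admits no morphism to $Y$ at all, and ``semiample over $U$, hence a fortiori semiample over $Y$'' does not make sense on it. The repair is exactly the device of \pref{pr:betweeen_MDS}: replace $D$ by $D+mf^*A$ with $A$ a $\pi_2$-ample divisor and $m\gg 0$, so that by polyhedrality of $\NE_1(X/U)$ every negative extremal ray lies in $\NE(f)$ and every step stays over $Y$ --- but once you introduce this you are reproducing the paper's cone-theoretic proof, and the Cox detour for $f$ becomes superfluous.
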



Finally we study the base changes of MDMs.
In general a base change of an MDM is not necessarily an MDM.
We demonstrate this by a one parameter family of $K3$ surfaces
such that only the special fibre has infinitely many automorphisms (see \pref{eg:K3_fibre}).
However, under the strong conditions as in the \pref{th:int_base_change},
we can show that the base change of an MDM is also an MDM.

Let 
$f \colon X \to U $
 be an MDM and
$g \colon T \to U$ 
be a morphism between quasi-projective varieties.
Let $W \coloneqq X \times_{U}  T$ be the fibre product as in the following diagram.
\begin{align}\label{eq:int_flat_diagram}
\xymatrix{
 W \ar[r]^{p} \ar[d]_{q} & X \ar[d]^{f} \\
 T \ar[r]^{g} & U
}
\end{align}
In the diagram, $p$ and $q$ denote the natural projections.
\begin{theorem}\label{th:int_base_change}
Assume the following three conditions:
\begin{enumerate}
\item
$W$ is normal and $\bQ$-factorial.

\item
The natural map $p^* \colon \Pic(X/U) \to \Pic(W/T)$ is surjective.

\item
$\N ^1(W/T)_{\bQ} \simeq \Pic(W/T)_{\bQ}$ and the natural map $g^* f_* L \to q_* p^* L$ is surjective for any line bundle $L$ on $X$.
\end{enumerate}
Then $q$ is an MDM.
\end{theorem}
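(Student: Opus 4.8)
The plan is to show that the base change $q\colon W\to T$ satisfies the defining conditions of an MDM (as given in \pref{df:MDM}). Conditions (1) and the first half of condition (3) in the hypothesis already hand me the $\bQ$-factoriality of $W$ and the isomorphism $\N^1(W/T)_{\bQ}\simeq\Pic(W/T)_{\bQ}$, so the geometric part of the definition is free. What remains is the finite-generation condition, and here the natural strategy is to invoke the Cox-sheaf characterization of \pref{th:int_mdm_cox}: since $W$ is $\bQ$-factorial with $\N^1(W/T)_{\bQ}\simeq\Pic(W/T)_{\bQ}$, it suffices to prove that the Cox sheaf of $q\colon W\to T$ is a finitely generated $\cO_T$-algebra, given that the Cox sheaf of $f\colon X\to U$ is a finitely generated $\cO_U$-algebra.

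\medskip

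First I would fix a finite set of line bundles $L_1,\dots,L_r$ on $X$ whose classes generate $\Pic(X/U)_{\bQ}$ and with respect to which the Cox sheaf $\mathcal R_X=\bigoplus_{(m_1,\dots,m_r)} f_*(L_1^{m_1}\otimes\cdots\otimes L_r^{m_r})$ is defined. By hypothesis (2), the pullbacks $p^*L_1,\dots,p^*L_r$ generate $\Pic(W/T)_{\bQ}$, so I may use these same bundles to present the Cox sheaf $\mathcal R_W$ of $q$. The core of the argument is then to compare the graded pieces: for each multidegree $\mathbf m$ I want a surjection relating $q_*p^*(L^{\mathbf m})$ to $g^*f_*(L^{\mathbf m})$. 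This is exactly what hypothesis (3) supplies, namely the surjectivity of the natural base-change map $g^*f_*L\to q_*p^*L$ for every line bundle $L$ on $X$. Applying this to each $L^{\mathbf m}=L_1^{m_1}\otimes\cdots\otimes L_r^{m_r}$ gives a surjection of sheaves $g^*(f_*L^{\mathbf m})\twoheadrightarrow q_*p^*L^{\mathbf m}$ in each degree.

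\medskip

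The key step is to assemble these degreewise surjections into a single surjective homomorphism of graded $\cO_T$-algebras $g^*\mathcal R_X\twoheadrightarrow\mathcal R_W$. For this I must check that the maps $g^*f_*L^{\mathbf m}\to q_*p^*L^{\mathbf m}$ are compatible with the multiplication maps on both sides, i.e.\ that they fit into a commutative diagram with the tensor-product/cup-product structure; this is a routine naturality verification for the base-change morphism, using that $p^*(L^{\mathbf m}\otimes L^{\mathbf n})\simeq p^*L^{\mathbf m}\otimes p^*L^{\mathbf n}$ and the projection formula. Once the algebra map $g^*\mathcal R_X\to\mathcal R_W$ is surjective, finite generation of $\mathcal R_W$ over $\cO_T$ follows formally: $\mathcal R_X$ is a finitely generated $\cO_U$-algebra since $f$ is an MDM (by \pref{th:int_mdm_cox}), so $g^*\mathcal R_X$ is a finitely generated $\cO_T$-algebra (pullback of a finitely generated algebra along $g$), and any quotient of a finitely generated algebra is finitely generated. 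Applying \pref{th:int_mdm_cox} in the reverse direction to $W$, which is legitimate by hypotheses (1) and (3), concludes that $q$ is an MDM.

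\medskip

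The main obstacle I anticipate is not the finite-generation bookkeeping but the global structural compatibility of the Cox sheaf under pullback. The subtlety is that a Cox sheaf is defined only up to a choice of shifting/normalization and carries a grading by a finitely generated group that may have torsion or may require passing to a sublattice of $\Pic$ on which the grading is genuinely $\bZ$-graded; I must ensure that the base-change maps respect the chosen grading group and that the identification $\Pic(X/U)_{\bQ}\to\Pic(W/T)_{\bQ}$ induced by $p^*$ matches the grading used on both Cox sheaves. If $p^*$ fails to be an isomorphism integrally (hypothesis (2) only asserts surjectivity), I should be careful that the grading group of $\mathcal R_W$ is a quotient of that of $\mathcal R_X$, so that $g^*\mathcal R_X$ still surjects onto $\mathcal R_W$ after regrouping graded pieces. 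Handling this grading-group map correctly, rather than the sheaf-level surjectivity itself, is where the real care is required.
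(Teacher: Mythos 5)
Your overall strategy matches the paper's: reduce to finite generation of the Cox sheaf of $q$ via the characterization \pref{th:int_mdm_cox}, obtained from the surjection $g^*\bigl(\bigoplus_{m\in\bZ^r} f_*\cL^m\bigr)\twoheadrightarrow \bigoplus_{m\in\bZ^r} q_*p^*\cL^m$ supplied by hypothesis (3) together with finite generation of the Cox sheaf of $f$ (\pref{pr:Cox_f_g}). But there is a genuine gap exactly at the point you flag in your last paragraph, and your proposed fix does not work. Hypothesis (2) gives only surjectivity of $p^*\colon\Pic(X/U)_{\bQ}\to\Pic(W/T)_{\bQ}$, so the classes $[p^*L_1],\dots,[p^*L_r]$ generate but need not form a basis; when the relative Picard rank drops, the $\bZ^r$-graded algebra $\bigoplus_{m\in\bZ^r} q_*p^*\cL^m$ that your surjection hits is strictly larger than a Cox sheaf of $W/T$, which by definition must be graded by a basis, say $\cL'=(p^*L_1,\dots,p^*L_{r'})$ after reordering. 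There is no surjection from this big ring onto $\mathcal{R}_W$ obtained by ``regrouping graded pieces'': two multidegrees $\mathbf m\neq\mathbf n$ with the same class in $\Pic(W/T)_{\bQ}$ give bundles $p^*\cL^{\mathbf m}$ and $p^*\cL^{\mathbf n}$ that agree only up to a twist by a line bundle pulled back from $T$ (and a priori only after passing to multiples, since the linear relations among the $[p^*L_i]$ are rational), so collapsing the grading is neither canonical nor even defined over $\bZ$. The correct relationship is the opposite one: the Cox sheaf $\bigoplus_{m'\in\bZ^{r'}}q_*\cL'^{m'}$ is the \emph{subalgebra} of the big ring consisting of the pieces with $m_{r'+1}=\cdots=m_r=0$, that is, the invariants of the torus $\cT=\Hom(\bZ^{r-r'},\bC^*)$ acting through the excess grading directions. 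Since a subalgebra of a finitely generated algebra need not be finitely generated, this passage is not formal; the paper closes it by reductivity of the torus (Nagata's theorem, as in \pref{rm:finiteness}): invariants of a finitely generated $\cO_T$-algebra under a reductive group are finitely generated. This torus-invariants step is the one missing idea in your proposal.

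Two smaller points. First, your argument does work verbatim in the special case where $p^*$ is an isomorphism on $\Pic(\cdot)_{\bQ}$, for then the big ring is itself a Cox sheaf of $W/T$; this is what happens under the flatness hypothesis of \pref{cr:int_base_change}, but the theorem as stated allows the rank to drop. Second, to apply \pref{th:int_mdm_cox} (equivalently \pref{cr:quotient_satisfies_}) to $q$ you must first know that $q$ is an algebraic fibre space; the paper checks this at the outset (connectedness of the fibres of $q$ from $f_*\cO_X=\cO_U$, then Stein factorization together with normality of $T$), and your proposal omits this verification.
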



Under the conditions (1) and (2) of \pref{th:int_base_change},
the condition $(3)$ holds
if $g$ is a flat proper morphism.
Hence we have the following corollary.

\begin{corollary}\label{cr:int_base_change}
Assume the condition $(1)$ and $(2)$ in \pref{th:int_base_change} and the following $(3')$.
\begin{enumerate}
\item[(3')] $g$ is flat and proper.
\end{enumerate}
Then $q$ is an MDM.
\end{corollary}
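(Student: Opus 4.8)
The plan is to derive Corollary \ref{cr:int_base_change} from Theorem \ref{th:int_base_change} by showing that condition (3') implies condition (3). The corollary already assumes (1) and (2), so the only work is verifying the two parts of (3): the isomorphism $\N^1(W/T)_{\bQ} \simeq \Pic(W/T)_{\bQ}$ and the surjectivity of the natural map $g^* f_* L \to q_* p^* L$ for every line bundle $L$ on $X$. Of these, the isomorphism of relative Néron--Severi and Picard groups should follow quickly: condition (2) gives surjectivity of $p^* \colon \Pic(X/U) \to \Pic(W/T)$, and since $f$ is an MDM we have $\Pic(X/U)_{\bQ} \simeq \N^1(X/U)_{\bQ}$ by the definition of MDM (see \pref{df:MDM}); combining this with (2) and the compatibility of numerical and Picard equivalence under pullback forces $\Pic(W/T)_{\bQ} \simeq \N^1(W/T)_{\bQ}$. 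So the genuine content of the reduction is the surjectivity statement.

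For the surjectivity of $g^* f_* L \to q_* p^* L$, the key is to invoke flat base change for cohomology and pushforward of sheaves. In the fibre-product diagram \eqref{eq:int_flat_diagram}, since $g$ is flat, flat base change yields a canonical isomorphism $g^* f_* L \xrightarrow{\sim} q_* p^* L$ for any quasi-coherent sheaf $L$ on $X$ (the flatness of $g$ ensures that forming the pushforward along $f$ commutes with the base change $g$). Properness of $g$ (and hence of $q$, properness being stable under base change) guarantees that the relevant pushforwards are coherent, so the objects in question are well-behaved. In fact flat base change gives an isomorphism, which is a fortiori surjective, so condition (3) holds.

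The main obstacle, and the point requiring the most care, is ensuring that the flat base change theorem applies in exactly the form needed. First I would confirm that $f_* L$ is the sheaf whose pullback appears on the left: the standard flat base change isomorphism is stated as $g^* R^i f_* \mathcal{F} \simeq R^i q_* p^* \mathcal{F}$ when $g$ is flat (and $f$ is suitably nice, e.g. quasi-compact and quasi-separated, which holds here since all varieties are quasi-projective and $f$ is proper). Taking $i = 0$ and $\mathcal{F} = L$ gives precisely $g^* f_* L \simeq q_* p^* L$. I would state the flatness hypothesis on $g$ as the hypothesis that licenses this isomorphism, and note that properness is what makes the setup fit the theorem's standing assumptions and keeps the sheaves coherent. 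The subtlety to flag is that flat base change in degree zero requires the flatness of $g$ but not properness of $f$ per se for the isomorphism itself; properness enters to guarantee coherence and compatibility with the MDM framework. Having established the isomorphism, condition (3) follows immediately, and then Theorem \ref{th:int_base_change} gives that $q$ is an MDM, completing the proof.
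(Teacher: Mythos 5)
Your overall strategy---deduce the corollary from \pref{th:int_base_change} by showing that $(3')$ implies $(3)$---is exactly the paper's (first) proof, and your treatment of the second half of condition $(3)$ is correct and matches the paper: degree-zero flat base change gives $g^*f_*L \xrightarrow{\sim} q_*p^*L$, which is a fortiori surjective, and properness plays no essential role in that isomorphism. The problem is the first half of $(3)$, which you dismiss as following ``quickly'' from $(2)$, the MDM property of $f$, and ``the compatibility of numerical and Picard equivalence under pullback.'' That compatibility only runs in the easy direction: $L \equiv_U 0$ implies $p^*L \equiv_T 0$ (push a $q$-contracted curve forward and use the projection formula), which makes the induced map $\overline{p^*}\colon \N^1(X/U)_{\bQ} \to \N^1(W/T)_{\bQ}$ well defined. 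What the argument actually requires is the converse: if $p^*L \equiv_T 0$ then $L \equiv_U 0$, i.e.\ injectivity of $\overline{p^*}$. Indeed, to see that the natural surjection $\varphi \colon \Pic(W/T)_{\bQ} \to \N^1(W/T)_{\bQ}$ is injective, one takes $M$ with $\varphi(M)=0$, writes $M = p^*[L]$ using $(2)$, and must conclude $[L]=0$; the only way to exploit $\Pic(X/U)_{\bQ} \simeq \N^1(X/U)_{\bQ}$ is to first know $L \equiv_U 0$. A commutative square with surjective horizontal arrows and an isomorphism on one side does not by itself transfer injectivity to the other side, so your formal combination of $(2)$ and the MDM hypothesis proves nothing here.

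This converse implication is a genuine geometric statement, and it is precisely where $(3')$ enters a second time---your sketch never uses flatness or properness of $g$ for this half at all. The paper's argument: given a curve $C \subset X$ with $f(C) = u$, choose $t \in g^{-1}(u)$; then $C \times \{t\}$ is a curve in $W$ contracted by $q$, and $L.C = p^*L.(C \times \{t\}) = 0$. The existence of such a $t$ needs $g^{-1}(u) \neq \emptyset$ for every $u$, i.e.\ surjectivity of $g$, which is exactly what flatness plus properness supply (the image of $g$ is open and closed in the irreducible $U$). If $g$ were allowed to miss the very points of $U$ over whose fibres $L$ has nonzero degree on contracted curves, the implication would fail, so this step cannot be waved away as formal compatibility. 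To repair your proof, insert this curve-lifting argument (establishing injectivity of $\overline{p^*}$, hence of $\varphi$) before invoking \pref{th:int_base_change}.
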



\subsection{Contents of the paper}
In Section 2, we prove some properties about divisors on algebraic varieties and about rational maps which we use in later parts of the paper.

We give two equivalent definitions of MDM in Section 3 (see \pref{df:MDM} and \pref{pr:MDM2}).

In Section 4, we investigate the relative movable cones of MDMs.
For any MDM $\pi \colon X \to U$, we prove that there exists a fan $\cM_{X/U}$ in $\N ^1 (X/U)_{\bQ}$ whose support is the relative movable cone of $\pi$ such that the cones in $\cM_{X/U}$ are in one-to-one correspondence with rational contractions of $X$ over $U$ (see \pref{th:mov_cont}).
This is the natural generalization of \cite[1.11.PROPOSITION (3)]{MR1786494}.

We prove \pref{th:int_thm_MMP} in Section 5.
The idea of our proof is mostly the same as the proof of \cite[1.11.PROPOSITION (1)]{MR1786494} for MDSs.
However, we write down the detailed proof in the relative setting for the convenience of the reader. We also refer to \cite{Casagrande} for the proof.

In Section 6, we prove \pref{th:int_mdm_cox}.
The only if part, which is proved in \pref{pr:Cox_f_g}, is an easy consequence of the Mori chamber decomposition of the relative effective cone (see \pref{cr:mori_chamber}) and the finite generation of the section algebras (see \pref{cr:section_ring_f_g}).
To prove the if part, we study a version of VGIT for affine morphisms.
In \pref{th:geom_quot_mdm} we prove that some GIT quotients of affine morphisms are MDMs under some conditions.
This is the generalization of \cite[2.3.THEOREM]{MR1786494}.
Moreover, we show in \pref{pr:MDM_is_the_quotient} that $\pi \colon X \to U$ is a GIT quotient of the relative spectrum of a Cox sheaf of $X$ over $U$
if the Cox sheaf is a finitely generated $\cO_U$-algebra.
Finally we obtain the if part of \pref{th:int_mdm_cox}
by combing \pref{th:geom_quot_mdm} and \pref{cr:quotient_satisfies_}.

In section 7, we give various examples of MDMs and prove \pref{th:intro_composition}, \pref{th:int_base_change}, and \pref{cr:int_base_change}.
\pref{th:intro_composition} follows from \pref{pr:betweeen_MDS} and \pref{pr:surj_MDM}, which is the generalization of \cite[Theorem 1.1]{MR3466868}.
The proof of \pref{th:int_base_change} is given as an application of \pref{th:int_mdm_cox}.
We see that the finite generation of the Cox sheaf of $f$ implies the same for $q$
under the conditions $(2)$ and $(3)$.
\pref{cr:int_base_change} immediately follows from \pref{th:int_base_change}.
We also give an alternative proof for this corollary where we directly investigate the geometry of $W$
and confirm that $q$ satisfies the conditions of \pref{df:MDM}.
In the proof, the flatness of $g$ is used to show that
the small $\bQ$-factorial modifications of $X$ can be lifted to those of $W$.




\subsection*{Acknowledgements}
The author would like to thank his advisor Shinnosuke Okawa for his guidance and many useful suggestions.

\section{Preliminaries}

\subsection{Notation and convention}
Unless otherwise stated, in this paper we assume that varieties are normal, and
$\pi \colon X \to U $ is a projective morphism between quasi-projective varieties over the base field $\bC$.

\begin{definition}\label{df:foundation}
We use the following definitions and notation in this paper.

\begin{enumerate}

\item 
$\Pic(X/U)$ (respectively, $\Pic(X/U) _{\bQ}$) denotes relative Picard group $\Pic(X)/ \pi^* (\Pic(U))$ (resp., $\Pic(X/U) \otimes \bQ$).

\item
$\N ^1 (X/U)_{\bQ}$ denotes $\Pic(X/U)_{\bQ}/ \equiv $, 
where $D \equiv D'$ if $D.C = D'.C$ for any complete curve contracted by $\pi$.

\item
For a $\bQ$-divisor $D$ on $X$, we define the \emph{stable base locus} $\B (D/U)$ of $D$ over $U$ by
\begin{align*}
\B (D/U) = \bigcap_{0 \le D' \sim_{\pi, \bQ }D} \Supp(D'),
\end{align*}
where we write $D' \sim_{\pi, \bQ }D$ if divisors $D$ and $D'$ on $X$ have the same class in $\Pic(X/U)_{\bQ}$.

\item
The \emph{augmented base locus} $\B_+ (D/U)$ of $D$ over $U$ is defined by
\begin{align*}
\B_+ (D/U) = \B (D - \epsilon A /U),
\end{align*}
where $A$ is an arbitrary $\pi$-ample divisor and $ \epsilon$ is a sufficiently small positive rational number.


\item
For a Cartier divisor $D$,
the rational map associated to the following natural map
\begin{align*}
 \alpha_D \colon \pi^*  \pi_* \cO (D) \to \cO (D)
\end{align*}
is denoted by $\Phi_{D /U} \colon X \dashrightarrow \Proj_U (\operatorname{Sym} (\pi_* \cO_X(D)) )$.

\item
We say that a \bQ-divisor $D$ is \emph{$\pi$-semiample} (or semiample over $U$) if there exists a morphism 
$X \to Y$ over $U$ such that $D$ is a pull-back of a $\bQ$-divisor on $Y$ which is ample over $U$.
This is equivalent to the condition that $\alpha_{mD}$ is surjective for some positive integer $m$.

\item
We say that a \bQ-divisor $D$ is \emph{$\pi$-movable} (or movable over $U$) if 
\begin{align*}
\codim (\Supp (\coker (\alpha_{mD}))) \ge 2
\end{align*}
holds for some positive integer $m$.
Since $X$ is assumed to be normal, this is equivalent to that there exists an open subset $V \subset X$ such that 
$
\codim(X  \backslash V) \ge 2
$
and
$D|_{V}$ is 
$\pi|_{V}$-semiample.

\end{enumerate}

\end{definition}

\begin{remark}
In \cite[Section 2]{MR924674}, $\pi$-movable refers to the condition $\codim (\Supp (\coker (\alpha_{D}))) \ge 2$.
Our definition is slightly different but more convenient for our purposes.
\end{remark}

The following \pref{pr:rsemiample} may be known to the experts. We could not find an appropriate reference, so we include it here.

\begin{proposition}\label{pr:rsemiample}
Let $\pi \colon X \to U$ be as in \pref{df:foundation}.
Then a divisor $D$ on $X$ 
is $\pi$-semiample if and only if there exists an ample divisor $A$ on $U$ and a positive integer $m$ such that
$mD + \pi ^ * A$ is semiample over $\bC$.
Similarly,
$D$ is $\pi$-movable if and only if there exists an ample divisor $A$ on $U$ and a positive integer $m$ such that
$mD + \pi ^ * A$ is movable over $\bC$.
\end{proposition}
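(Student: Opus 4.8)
The plan is to reduce both equivalences to a single factorization of the absolute evaluation map of $k(mD+\pi^*A)$. Write $E = mD + \pi^*A$ with $A$ a Cartier ample divisor on $U$. For each integer $k>0$ with $kmD$ and $kA$ Cartier, the projection formula gives $\pi_*\cO(kE)\cong\pi_*\cO(kmD)\otimes\cO_U(kA)$, and since $H^0(X,-)=H^0(U,\pi_*(-))$ the absolute evaluation (global generation) map of $kE$ factors as
\[
H^0(X,\cO(kE))\otimes_{\bC}\cO_X \xrightarrow{\ \beta\ } \pi^*\bigl(\pi_*\cO(kmD)\otimes\cO_U(kA)\bigr) \xrightarrow{\ \gamma\ } \cO(kE),
\]
where $\beta$ is the pullback along $\pi$ of the evaluation map of $\pi_*\cO(kmD)\otimes\cO_U(kA)$ on $U$, and $\gamma=\alpha_{kmD}\otimes\mathrm{id}_{\cO(k\pi^*A)}$ under the identification $\pi^*\pi_*\cO(kE)\cong\pi^*\pi_*\cO(kmD)\otimes\cO(k\pi^*A)$. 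The point is that $\gamma$ only sees the relative data, since $\coker\gamma\cong\coker(\alpha_{kmD})\otimes\cO(k\pi^*A)$ has the same support as $\coker(\alpha_{kmD})$, while $\beta$ can be forced to be surjective: as $\pi_*\cO(kmD)$ is coherent on the quasi-projective $U$ and $A$ is ample, replacing $A$ by a sufficiently large multiple (still ample) makes $\pi_*\cO(kmD)\otimes\cO_U(kA)$ globally generated, and $\pi^*$ preserves surjections.

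For the semiample statement I would track surjectivity. In the forward direction, $\pi$-semiampleness of $D$ gives $\alpha_{mD}$ surjective (\pref{df:foundation}(6)), hence $\gamma$ surjective for $k=1$; enlarging $A$ so that $\beta$ is surjective makes the composite $\gamma\circ\beta$ surjective, i.e.\ $mD+\pi^*A$ is base-point-free and so semiample over $\bC$. Conversely, if $mD+\pi^*A$ is semiample over $\bC$, then $\gamma\circ\beta$ is surjective for some $k$; since the image of the composite is contained in that of $\gamma$, the map $\gamma$, and therefore $\alpha_{kmD}$, is surjective, so $D$ is $\pi$-semiample.

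For the movable statement the same factorization is used, but now I compare supports of cokernels rather than surjectivity. In the forward direction, $\pi$-movability gives $m$ with $\codim\Supp\coker(\alpha_{mD})\ge 2$; after enlarging $A$ so that $\beta$ is surjective, $\coker(\gamma\circ\beta)=\coker\gamma$ has the same support as $\coker(\alpha_{mD})$, so the base locus of $|mD+\pi^*A|$ has codimension $\ge 2$ and $mD+\pi^*A$ is movable over $\bC$. Conversely, movability of $mD+\pi^*A$ over $\bC$ yields $k$ with $\codim\Supp\coker(\gamma\circ\beta)\ge 2$; since $\coker\gamma$ is a quotient of $\coker(\gamma\circ\beta)$, its support, equal to $\Supp\coker(\alpha_{kmD})$, also has codimension $\ge 2$, whence $D$ is $\pi$-movable.

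The main work is in setting up the factorization rigorously: verifying the projection-formula compatibility $\alpha_{kE}=\alpha_{kmD}\otimes\mathrm{id}$, and confirming the Serre-type global generation of $\pi_*\cO(kmD)\otimes\cO_U(kA)$ on the merely quasi-projective base $U$ so that $\beta$ is surjective. Once these two points are in place, both equivalences follow formally by the surjectivity/support bookkeeping above, handled uniformly in the two directions.
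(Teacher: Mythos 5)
Your proof is correct, and it takes a genuinely different route from the paper's. The paper proves in detail only the forward implication of the semiample case, and does so geometrically: starting from the first characterization in \pref{df:foundation} (6), it takes the morphism $f \colon X \to Y$ over $U$ with $D = f^*A$ and $A$ relatively ample, chooses an ample $H$ on $U$ so that $A + \pi'^*H$ is ample on $Y$, and pulls back a very ample multiple; the converse is declared obvious, and the movable case is said to follow from the semiample one. You instead work throughout with the second characterization (the evaluation maps $\alpha$), factoring the absolute evaluation map of $k(mD+\pi^*A)$ as $\gamma \circ \beta$, where $\gamma = \alpha_{kmD}\otimes \mathrm{id}$ via the projection formula and $\beta$ is the pullback of an evaluation map on $U$, made surjective by Serre-type global generation on the quasi-projective base. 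This buys uniformity: all four implications (two directions, two cases) follow from the same diagram by tracking either surjectivity or supports of cokernels. In particular you give an actual argument for the movable case, which the paper leaves implicit --- and whose reduction to the semiample statement is not completely formal, since the open subset $V \subset X$ with $\codim(X \backslash V)\ge 2$ on which $D$ is relatively semiample is no longer projective over $U$, so the proposition cannot be invoked verbatim there; your cokernel-support bookkeeping is exactly what fills this in. Conversely, the paper's geometric argument is shorter and more transparent for the one direction it treats in full, and your observation that $\operatorname{im}(\gamma\circ\beta)\subseteq \operatorname{im}(\gamma)$ is precisely the content behind the paper's assertion that the converse is ``obvious.''
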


\begin{proof}
We only show the semiample case since
the movable case follows from it.
If we assume that $mD + \pi ^ * A$ is generated by global sections for some $m>0$,
it is obvious that $D$ is $\pi$-semiample.
Hence we prove the other implication.
Consider the following commutative diagram
\begin{align*}
 \xymatrix@ul{
   & {U}  & X\ar[l]_{\pi} \ar[dl]^{f} \\
   & Y  \ar[u]^{\pi'}
  }
\end{align*}
and a $\pi'$-ample divisor $A$ on $Y$ such that 
$f^* (A) = D$. Since $U$ is quasi-projective, there exists an ample divisor $H$ on $U$
such that $A + {\pi '} ^{*} H$ is ample.
Hence, for sufficiently large $m$, the divisor $mA + m {\pi'} ^* H$ is very ample and is in particular generated by global sections.
Therefore
\begin{align*}
mD + \pi ^* (mH) = f^*(mA + m {\pi'} ^* H)
\end{align*}
is also generated by global sections.
\end{proof}

\begin{corollary}\label{cr:base_movable}
Under the same assumptions as in \pref{pr:rsemiample},
$D$ is $\pi$-movable if and only if $\codim(\B (D/U)) \ge 2$.
\end{corollary}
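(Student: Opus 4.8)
The plan is to deduce the statement from \pref{pr:rsemiample}, which lets me trade the relative notion of movability for an absolute one after twisting by the pullback of an ample divisor on $U$. Concretely, \pref{pr:rsemiample} says that $D$ is $\pi$-movable if and only if there exist an ample divisor $A$ on $U$ and an integer $m>0$ such that $mD+\pi^*A$ is movable in the absolute sense. So it suffices to establish two facts and combine them: first, the absolute statement that a divisor $E$ on $X$ is movable if and only if $\codim \B(E)\ge 2$, where $\B(E)=\B(E/\operatorname{Spec}\bC)$ denotes the absolute stable base locus; and second, a comparison between $\B(mD+\pi^*A)$ and $\B(D/U)$ showing that the condition $\codim\ge 2$ transfers between them for a suitable choice of $A$ and $m$.

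The absolute statement is elementary. For any divisor $E$, the support of $\coker(\alpha_{mE})$ is set-theoretically the base locus $\operatorname{Bs}|mE|$ of the complete linear system, and $\B(E)=\bigcap_m \operatorname{Bs}|mE|$ stabilizes to $\operatorname{Bs}|m_0E|$ for some sufficiently divisible $m_0$. Hence $E$ is movable precisely when some $\operatorname{Bs}|mE|$ has codimension $\ge 2$, which occurs if and only if $\codim \B(E)\ge 2$: one direction uses $\B(E)\subseteq\operatorname{Bs}|mE|$ for every $m$, the other uses the stabilization. For the comparison I will first record the easy inclusion $\B(D/U)\subseteq \B(mD+\pi^*A)$, valid for every ample $A$ and every $m$: any effective divisor $\bQ$-linearly equivalent to $mD+\pi^*A$ has class a positive multiple of $[D]$ in $\Pic(X/U)_{\bQ}$, hence is one of the divisors cutting out $\B(D/U)$, which is invariant under positive scaling. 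Together with the absolute statement this already yields the implication ``$D$ is $\pi$-movable $\Rightarrow \codim\B(D/U)\ge 2$.''

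The substance is the reverse inclusion, which is where I expect the main difficulty. Assuming $\codim\B(D/U)\ge 2$, by Noetherianity I can choose finitely many effective divisors $D'_1,\dots,D'_r$ with $D'_i\sim_{\pi,\bQ} D$ and $\B(D/U)=\bigcap_i \Supp D'_i$; after clearing denominators I may assume a common integer $m$ with $mD'_i\sim mD+\pi^*B_i$ for divisors $B_i$ on $U$. The key step is then to globalize: choosing $A$ ample on $U$ large enough that each $A-B_i$ is globally generated (possible since $U$ is quasi-projective and there are finitely many $B_i$), the divisors $mD'_i+\pi^*A_i$, with $A_i$ ranging over $|A-B_i|$, all lie in $|mD+\pi^*A|$. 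Intersecting their supports over this family, for each fixed $i$, removes the $\pi^*A_i$ contribution because $\operatorname{Bs}|A-B_i|=\emptyset$, leaving $\operatorname{Bs}|mD+\pi^*A|\subseteq \bigcap_i \Supp D'_i=\B(D/U)$. Therefore $\codim\B(mD+\pi^*A)\ge \codim\B(D/U)\ge 2$, so $mD+\pi^*A$ is movable by the absolute statement, and $D$ is $\pi$-movable by \pref{pr:rsemiample}.

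The point to watch, and the crux of the argument, is that twisting by $\pi^*A$ threatens to introduce spurious divisorial components $\pi^{-1}\Supp A_i$ into the base locus of $mD+\pi^*A$; the base-point-freeness of the systems $|A-B_i|$ is exactly what guarantees these components are cut out when one intersects over the whole family, and this is precisely the role played by the quasi-projectivity of $U$.
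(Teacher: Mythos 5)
Your proposal is correct, but for the substantive direction it takes a genuinely different route from the paper. The paper proves that $\codim \B(D/U) \ge 2$ implies $\pi$-movability without invoking \pref{pr:rsemiample} at all: it works entirely in the relative setting, showing directly that $\Supp(\coker(\alpha_{mD})) \subset \B(D/U)$ for sufficiently large $m$. Given $p \notin \B(D/U)$, one picks an effective $D' \sim_{\pi,\bQ} D$ avoiding $p$; after scaling, $D'$ is integral and differs from $mD$ only by the pullback of a line bundle from $U$, and since $\coker(\alpha)$ is insensitive to such twists, the section cutting out $D'$ witnesses $p \notin \Supp(\coker(\alpha_{mD}))$. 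Movability is then immediate from the definition in \pref{df:foundation}, with no passage to the absolute case. Your argument instead pushes everything down to the absolute setting via \pref{pr:rsemiample}, and the price is exactly the difficulty you flag: the twist $\pi^*A$ threatens to introduce the components $\pi^{-1}(\Supp A_i)$ into the base locus, which you cancel by choosing $A$ so that each $A - B_i$ is globally generated and intersecting over the whole family $|A - B_i|$, using the identity $\bigcap_{\alpha}(S \cup T_\alpha) = S \cup \bigcap_{\alpha} T_\alpha$. This is correct — global generation of $kH - B_i$ for $k \gg 0$ does hold on quasi-projective $U$ by the characterization of ampleness on Noetherian schemes, and your easy direction and absolute statement are fine — but it is longer. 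What the paper's route buys is economy and self-containedness (the relative base locus is handled intrinsically, via the invariance of $\coker(\alpha)$ under $\pi^*$-twists); what yours buys is that both implications become formal consequences of \pref{pr:rsemiample} together with the familiar absolute statement, at the cost of the extra base-point-free bookkeeping on $U$.
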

\begin{proof}
If $D$ is $\pi$-movable, then it follows that $\codim(\B (D/U)) \ge 2$  by \pref{pr:rsemiample}.

For the other implication, it is sufficient to show that 
$\Supp (\coker (\alpha_{mD}))) \subset \B(D/U) $ holds for a sufficiently large number $m$.
Take a point $p$ on $X$ such that $p \notin \B(D/U) $.
Then there exists an effective divisor $D'$ such that $D' \sim_{U, \bQ} D$ and $p \notin D'$.
By taking a positive integer $m$ and multiplying $D'$ appropriately, we may assume that $D'$ is an integral divisor and satisfies
$\Supp (\coker (\alpha_{mD}))) = \Supp (\coker (\alpha_{D'})))$.
However, $p \notin D'$ means that there exists a section $s \in H^0(X,\cO_X(D))$
which does not vanish at $p$.
This implies $p \notin \Supp (\coker (\alpha_{D'})))$.
\end{proof}

\begin{lemma}\label{lm:mov+fix}
Let $D$ be a $\pi$-effective divisor on $X$.
Then we obtain an effective divisor $F$ and  a $\pi$-movable divisor $M$
such that $D = M +F$ and $\pi_* \cO_X (D) \simeq \pi_*\cO_X (M)$.
\end{lemma}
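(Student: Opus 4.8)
The plan is to produce $F$ and $M$ as the relative fixed and movable parts of the relative linear system $\pi_* \cO_X(D)$, in direct analogy with the classical decomposition of a linear system into its fixed divisor and its movable part. Since $D$ is $\pi$-effective the sheaf $\pi_* \cO_X(D)$ is nonzero, so the evaluation map $\alpha_D \colon \pi^* \pi_* \cO_X(D) \to \cO_X(D)$ has nonzero image; twisting this image by $\cO_X(-D)$ yields a nonzero coherent ideal sheaf $\mathfrak{b} \subseteq \cO_X$. The point I would emphasize is that, because $(\pi^* \pi_* \cO_X(D))_x = (\pi_* \cO_X(D))_{\pi(x)} \otimes_{\cO_{U,\pi(x)}} \cO_{X,x}$, the stalk $\mathfrak{b}_x$ is generated by the germs at $x$ of all sections of $\cO_X(D)$ defined over $\pi^{-1}(V)$ for neighborhoods $V$ of $\pi(x)$. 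Thus $\mathfrak{b}$ genuinely records the common vanishing of every section that is local over $U$. I then let $F \coloneqq \sum_E \ord_E(\mathfrak{b})\, E$ be the divisorial part of $\mathfrak{b}$ (a finite effective divisor, as $V(\mathfrak{b})$ has only finitely many codimension-one components), and set $M \coloneqq D - F$, so that $D = M + F$ with $F$ effective.

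Next I would establish the isomorphism $\pi_* \cO_X(M) \simeq \pi_* \cO_X(D)$. Since $F \ge 0$ we have $D - F \le D$, giving a natural inclusion $\cO_X(M) = \cO_X(D-F) \hookrightarrow \cO_X(D)$ of reflexive subsheaves and hence $\pi_* \cO_X(M) \hookrightarrow \pi_* \cO_X(D)$. For surjectivity, take a local section $s$ of $\pi_* \cO_X(D)$ over an open $V \subseteq U$; for any prime divisor $E$ meeting $\pi^{-1}(V)$ the generic point $\eta_E$ lies in $\pi^{-1}(V)$, so the germ of $s$ at $\eta_E$ is among the generators of $\mathfrak{b}$ there and therefore $\ord_E(\operatorname{div}(s) + D) \ge \ord_E(\mathfrak{b})$. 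Along the remaining prime divisors the coefficient of $F$ is irrelevant to membership over $\pi^{-1}(V)$, so $\operatorname{div}(s) + D \ge F$ where it matters and $s$ already lies in $\cO_X(D-F)(\pi^{-1}(V)) = \cO_X(M)(\pi^{-1}(V))$. This shows the inclusion is surjective, hence an isomorphism.

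Finally I would verify that $M$ is $\pi$-movable. Under the identification $\pi_* \cO_X(M) = \pi_* \cO_X(D)$ the map $\alpha_D$ factors as $\alpha_M$ followed by the inclusion $\cO_X(M) \hookrightarrow \cO_X(D)$, so the image ideal of $\alpha_M$ (twisted by $\cO_X(-M)$) is exactly $\mathfrak{b} \cdot \cO_X(F) =: \mathfrak{b}'$, i.e. $\mathfrak{b}$ with its divisorial part removed. By the definition of $F$ the ideal $\mathfrak{b}'$ has no divisorial part, so $\cO_X / \mathfrak{b}'$ is supported in codimension $\ge 2$; consequently $\coker(\alpha_M) \simeq \cO_X(M) \otimes (\cO_X / \mathfrak{b}')$ has $\Supp$ of codimension $\ge 2$, which is the definition of $\pi$-movable (indeed with $m = 1$).

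I expect the main obstacle to be conceptual rather than computational: one must ensure that $F$ is a single well-defined divisor on $X$, even though passing to smaller opens of $U$ supplies more sections and could a priori shrink the fixed locus. This is precisely what the description of $\mathfrak{b}$ via germs resolves, so the care is in justifying that stalk-level description of the relative base ideal. A secondary technicality is the reflexive-sheaf bookkeeping for the identifications $\cO_X(D-F) \hookrightarrow \cO_X(D)$ and $\coker(\alpha_M) \simeq \cO_X(M) \otimes (\cO_X/\mathfrak{b}')$ on the normal (possibly singular) variety $X$; these hold because all the $\cO_X(\,\cdot\,)$ are reflexive and the relevant inclusions are inclusions of subsheaves of the constant rational-function sheaf, so everything reduces to an order-of-vanishing comparison in codimension one.
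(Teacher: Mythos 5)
Your proof is correct and follows essentially the same route as the paper: both take the image ideal of $\alpha_D \otimes \cO_X(-D)$ (the paper's base scheme $Z$, your $\mathfrak{b}$), define $F$ as its divisorial part and $M = D - F$, and then deduce $\pi_*\cO_X(D) \simeq \pi_*\cO_X(M)$ and the $\pi$-movability of $M$ from the fact that the residual ideal has no divisorial component. The only difference is presentational: the paper verifies these claims over an affine cover of $U$ using global sections, whereas you work with stalks and orders of vanishing at generic points of prime divisors.
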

\begin{proof}
Let $Z$ be a closed subscheme of $X$ corresponding to the ideal defined by the image of the map $\alpha_D ' \coloneqq \alpha_D \otimes \cO_X(-D)$.
Then $Z = \Supp (\coker (\alpha_{D}))$ holds as closed subsets of $X$.
For any prime divisor $F_i \subset Z$ , we denote $k_i \coloneqq \ord_{F_i}(Z)$.
Let $F \coloneqq \Sigma_{i} k_i F_i$ and $M \coloneqq D-F$.
We will check locally over $U$ that $M$ and $F$ satisfy the desired condition.
Consider an affine covering $U = \bigcup_{j=1}^{r} U_j$ of $U$.
We denote $\pi^{-1} (U_j)$ by $V_j$.
Let us consider the map
\begin{align*}
\alpha' _D |_{V_j} \colon H^0 (V_j, \cO_{V_j}(D)) \otimes \cO_{V_j}(-D) \to \cO_{V_j}.
\end{align*}
Then the closed subscheme defined by the image of
$\alpha_D '$
is
$Z \cap V_j$
and
$\ord_{F_i \cap V_j} (Z \cap V_j) = k_i$ holds
if
$F_i \cap V_j$ is not empty.
This implies that 
any section in $H^0 (V_j, \cO_{V_j}(D))$
vanishes along the subscheme $ \Sigma_{i=1}^{n_j} k_i  (F_i \cap V_j)$,
%
%
where we denote all the components of $F$ which have nonempty intersections with $V_j$ by $F_1, \dots, F_{n_j}$.

Then there is a natural isomorphism
\begin{align*}
H^0(V_j, \cO_{V_j}(D-F)) 
= H^0(V_j, \cO_{V_j}(D-\Sigma_{i=1}^{n_j} k_i F_i))
\simeq H^0 (V_j, \cO(D)),
\end{align*}
where the last isomorphism follows from
that the natural map
\begin{align*}
H^0 (V_j, \cO_{V_j}(D)) 
\to 
H^0 (\bigcup_{i=1}^{n_j} k_i F_i, \cO_{\bigcup_{i=1}^{n_j} k_iF_i}, (D))
\end{align*}
is the zero map.
Hence we obtain
$\pi_* \cO_X (D) \simeq \pi_*\cO_X (M)$.
Moreover, for each $i=1, \dots n_j$
we obtain a section
\begin{align*}
 s_i \in H^0(V_j, \cO_{V_j}(D-F)) 
\end{align*}
which does not vanish along $F_i$.
This implies that closed subscheme defined by the image of $\alpha' _{(D-F)}|_{V_j}$ has codimension at least two
since
it does not contain any $F_i$ by the above argument.
Hence $M$ is $\pi$-movable.
\end{proof}

From now on we assume that $\pi$ is an algebraic fibre space
(i.e., projective morphism over $U$ satisfying $\cO_U = f_* \cO_X$)
 and $X$ is $\bQ$-factorial.
Let us consider an algebraic fibre space
$
f \colon X \to Y
$
over $U$.
The following \pref{lm:known_inj} is well-known to the experts. We could not find an appropriate reference, so we include it here.
\begin{lemma}\label{lm:known_inj}
The morphism $f$ induces a natural injection
\begin{align*}
f^* \colon \Pic(Y/U) \hookrightarrow \Pic(X/U).
\end{align*}
\end{lemma}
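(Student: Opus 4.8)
The plan is to first verify that ordinary pullback descends to a well-defined homomorphism on relative Picard groups, and then to prove injectivity directly by means of the projection formula, the essential input being the defining property $f_* \cO_X = \cO_Y$ of an algebraic fibre space.

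For the well-definedness, recall that $f$ is a morphism over $U$, so if we write $\pi' \colon Y \to U$ for the structure morphism we have $\pi = \pi' \circ f$, and hence $\pi^* = f^* \circ \pi'^*$ as maps $\Pic(U) \to \Pic(X)$. Consequently the ordinary pullback $f^* \colon \Pic(Y) \to \Pic(X)$ carries $\pi'^* \Pic(U)$ into $\pi^* \Pic(U)$, so it induces a homomorphism $f^* \colon \Pic(Y/U) \to \Pic(X/U)$. This is the natural map in the statement.

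To prove injectivity, I would take a line bundle $L$ on $Y$ whose class lies in the kernel, which by definition of $\Pic(X/U)$ means $f^* L \simeq \pi^* M$ for some line bundle $M$ on $U$. Since $\pi^* M = f^* \pi'^* M$, putting $L' \coloneqq L \otimes (\pi'^* M)^{-1}$ gives $f^* L' \simeq \cO_X$. It then suffices to show $L' \simeq \cO_Y$, for this yields $L \simeq \pi'^* M$ and hence the vanishing of $[L]$ in $\Pic(Y/U)$. Here I would apply the projection formula: as $L'$ is locally free of rank one, $f_*(f^* L') \simeq L' \otimes f_* \cO_X$, and because $f$ is an algebraic fibre space the right-hand side equals $L' \otimes \cO_Y = L'$. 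On the other hand $f^* L' \simeq \cO_X$ forces $f_*(f^* L') \simeq f_* \cO_X = \cO_Y$, so comparing the two computations gives $L' \simeq \cO_Y$, as required.

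The entire argument is formal once the projection formula is invoked, so there is no serious obstacle; the one point that must be handled with care is that the computation $f_*(f^* L') \simeq L'$ relies precisely on $f_* \cO_X = \cO_Y$, i.e.\ on the hypothesis that $f$ is an algebraic fibre space. Note that $\bQ$-factoriality of $X$, though part of the standing assumptions, plays no role in this particular statement.
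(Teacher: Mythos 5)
Your proof is correct and follows essentially the same route as the paper's: both arguments rest on the projection formula together with $f_*\cO_X = \cO_Y$, the paper writing $f_*(f^*L)\simeq L$ directly while you first twist by $\pi'^*M$ to reduce to $f^*L'\simeq \cO_X$. The extra care you take with well-definedness and the twist is a fuller write-up of the same idea, not a different argument.
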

\begin{proof}
Let $L$ be a line bundle on $Y$.
Note that $f_* (f^* L) \simeq L$ since $f$ is an algebraic fibre space.
This implies if $[f^* L] = 0 $ in $\Pic(X/U)$, then $L$ comes from a line bundle on $U$.
\end{proof}

\subsection{Rational maps}
Let $\pi \colon X \to U $ be as in \pref{df:foundation}.
Assume that $X$ is $\bQ$-factorial.
Let us consider a dominant rational map
$
f \colon X \dashrightarrow Y
$
over $U$.

\begin{lemma}\label{lm:rat_pull_back}
The morphism $f$ induces a natural map 
$f^* \colon \Pic(Y/U) \to \Pic (X/U)$.
Moreover,
if a divisor $D$ on $X$ satisfies $D \equiv_{U} 0$, then $f^*D \equiv_U 0$.
The induced map
$f^* \colon \N^1(Y/U)_{\bQ} \to \N^1(X/U)_{\bQ}$
is injective if $f$ is birational.
\end{lemma}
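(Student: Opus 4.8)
The plan is to build $f^{*}$ out of the restriction of $f$ to its domain of definition and then to exploit the fact that divisor classes on a normal variety are unchanged when one deletes a closed subset of codimension at least two.

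First I would let $V \subseteq X$ be the largest open subset on which $f$ restricts to a morphism $f|_{V} \colon V \to Y$. Since $X$ is normal and $Y$ is proper over $U$, the complement $X \setminus V$ has codimension $\ge 2$. For a line bundle $L$ on $Y$ the pull-back $(f|_{V})^{*} L$ is a line bundle on $V$, and its reflexive extension $j_{*}((f|_{V})^{*}L)$ along $j \colon V \hookrightarrow X$ is a rank-one reflexive sheaf $\cO_{X}(G)$ for a Weil divisor $G$, determined up to linear equivalence because $\codim(X \setminus V) \ge 2$ identifies the divisor class groups of $V$ and $X$. As $X$ is $\bQ$-factorial, $G$ is $\bQ$-Cartier, so it defines a class $f^{*}L \in \Pic(X/U)_{\bQ}$; this assignment is additive, independent of the chosen representative, and carries $\pi_{Y}^{*}\Pic(U)$ into $\pi^{*}\Pic(U)$ (a pull-back from $U$ restricted to $V$ stays one), so it descends to the asserted homomorphism $f^{*} \colon \Pic(Y/U) \to \Pic(X/U)_{\bQ}$. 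Equivalently one may realise $f^{*}$ as $p_{*} \circ q^{*}$ on the normalised graph $W$ of $f$, with $p \colon W \to X$ birational and $q \colon W \to Y$ the induced morphism.

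The second and third assertions both rest on a single computation for curves lying in $V$. If $C$ is a complete curve with $C \subseteq V$, then $f|_{V}$ is a morphism near $C$ and, for any divisor $D$ on $Y$, the projection formula gives $f^{*}D \cdot C = (f|_{V})^{*}D \cdot C = D \cdot (f|_{V})_{*}C$. For the second assertion, take $D$ on $Y$ with $D \equiv_{U} 0$ and $C$ contracted by $\pi$; then $(f|_{V})(C)$ is contracted by $\pi_{Y}$, so $D \cdot (f|_{V})_{*}C = 0$. For the injectivity in the birational case, suppose $f^{*}D \equiv_{U} 0$ and let $C_{Y} \subseteq Y$ be $\pi_{Y}$-contracted and contained in the (codimension $\ge 2$) locus where $f^{-1}$ is a morphism; its strict transform $C \subseteq V$ satisfies $(f|_{V})_{*}C = C_{Y}$, whence $D \cdot C_{Y} = f^{*}D \cdot C = 0$. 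In each case I would finish by passing from curves in $V$ to all $\pi$- (respectively $\pi_{Y}$-) contracted curve classes: since the deleted locus has codimension $\ge 2$, a moving argument for one-cycles represents every such class by curves meeting $V$ in the required way, yielding $f^{*}D \equiv_{U} 0$ (so that $f^{*}$ descends to $\N^{1}(Y/U)_{\bQ} \to \N^{1}(X/U)_{\bQ}$) and $D \equiv_{U} 0$ (so that this map is injective).

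The step I expect to be the main obstacle is exactly this moving argument. It is indispensable because the cycle-level push-forward $p_{*}$ does \emph{not} preserve relative numerical triviality for arbitrary divisor classes, so everything hinges on realising enough contracted curve classes by curves avoiding the codimension $\ge 2$ indeterminacy and exceptional loci; in the relative and possibly singular setting this is delicate. I would handle it by restricting to a general fibre of $\pi$, where relative numerical triviality becomes ordinary numerical triviality and a moving lemma for one-cycles on the (resolved) fibre is available, and then propagate the vanishing of the intersection numbers to the remaining fibres using the constancy of intersection numbers in the family.
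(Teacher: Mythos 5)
Your construction of $f^*$ is fine and agrees with the paper's: the paper defines $f^*[D] \coloneqq [\mu_*(f'^*D)]$ on a resolution $\mu \colon \tilde X \to X$, $f' \colon \tilde X \to Y$ of $f$, which coincides with your reflexive extension from the domain of definition $V$. The problem is exactly the step you flagged as "the main obstacle": the moving argument is not merely delicate, it is false, and so is the proposed fallback. Relative numerical triviality over $U$ demands intersection number zero against \emph{all} $\pi$-contracted curves, and such curves can be rigid and contained in the indeterminacy locus $X \setminus V$, in which case their classes are not represented by any $1$-cycle supported in $V$. The simplest relevant example already defeats the approach: let $\pi \colon X \to U$ be a small (flopping) contraction of a threefold and $f \colon X \dashrightarrow Y$ the flop over $U$. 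Then every complete $\pi$-contracted curve is a flopping curve and hence lies in $X \setminus V$, so there are \emph{no} complete contracted curves inside $V$ and your projection-formula computation applies to nothing; moreover the general fibre of $\pi$ is a point, so restricting to a general fibre and "propagating by constancy of intersection numbers in the family" yields nothing either — the curves that matter exist only over special points of $U$ and their classes are not limits of curve classes in nearby fibres. The same defect infects your injectivity argument: the $\pi_Y$-contracted curves that must be tested include those meeting (or contained in) the bad loci, where the strict transform manipulation is unavailable.

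What is needed, and what the paper uses instead, is the negativity lemma. Set $E \coloneqq \mu^*(f^*D) - f'^*D$ on the resolution; it is $\mu$-exceptional by construction. If $D \equiv_U 0$, then for every $\mu$-contracted curve $C$ one has $E \cdot C = \mu^*(f^*D)\cdot C - f'^*D \cdot C = 0 - D \cdot f'_*C = 0$, since $f'(C)$ is contracted over $U$. The negativity lemma (applied to $\pm E$) then forces $E = 0$, i.e.\ $\mu^*(f^*D) = f'^*D$ as divisors on $\tilde X$. This identity is what lets one intersect with the lift of an \emph{arbitrary} $\pi$-contracted curve on $X$ — including rigid curves inside the indeterminacy locus — and conclude $f^*D \equiv_U 0$ via the projection formula. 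The injectivity in the birational case is proved the same way, applying the negativity lemma to $f'$ rather than $\mu$. Without this ingredient (or an equivalent substitute for it), your argument does not establish either of the last two assertions.
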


\begin{proof}
Consider the following diagram.
\begin{align*}
 \xymatrix@dr{
   & {\tilde{X}} \ar[d]_{\mu} \ar[r]^{f'} & Y \ar[d]^{\pi '} \\
   & X  \ar[r]_{\pi} \ar@{-->} [ur]^{f} & U &
  }
\end{align*}
In the diagram, $\mu$ is a resolution of $f$ and $\tilde{X}$ is nonsingular.
We can easily check that the morphism
\begin{align*}
f^* \colon \Pic(Y/U) \to \Pic (X/U) ; \ \ [D] \mapsto [\mu _* (f'^ *( D))]
\end{align*}
does not depend on the choice of resolutions.
Let us assume $D \equiv_{U} 0$.
We denote the divisor $\mu^*(f^* (D))  - f'^*(D)$ by $E$.
Then $E$ is $\mu$-exceptional,
and $E \equiv_{U} \mu^*(f^* (D)) $ 
since ${f' }^* (D) \equiv_{U} 0$.
Take any curve $C$ on $\tilde{X}$ such that $\mu(C) = \{ pt \}$.
We have
\begin{align*}
C.E = C.\mu^*(f^* (D)) - C.f'^*(D) =  \mu_*(C). f^* (D) = 0.
\end{align*}
Then, by applying the negativity lemma (see, for example, \cite[Lemma 3.39]{MR1658959}) to $\mu$, we obtain $E=0$.
Hence $ \mu^*(f^* (D)) \equiv_{U} 0 $, and the projection formula implies 
$ f^* (D) \equiv_{U} 0 $.

Let us prove the second assertion.
Assume that $f$ is birational and $f^*(D) \equiv_U 0$.
Take a curve $C$ on $\tilde{X}$ such that $f' (C)=\{ pt \}$.
Then we have
\begin{align*}
C.E = C.\mu^*(f^* (D)) - C.f'^*(D) = \mu_*(C).f^*(D) = 0,
\end{align*}
where the last equality follows from that $f^*(D) \equiv_U 0$ and that $\mu(C)$ is contracted by $\pi$.
By applying the negativity lemma to $f'$,
we obtain that $E=0$.
Then for any curve $C$ on $Y$ such that $\pi'(C)= \{ pt \}$,
we obtain 
\begin{align*}
d D.C = {f'} ^*(D).\tilde{C} = \mu^* (f^* (D)).\tilde{C} = 0,
\end{align*}
where $\tilde{C}$ is an irreducible curve on $\tilde{X}$ such that $f'(\tilde{C}) = C$ and $d$ is the degree of $\tilde{C}$ over $C$.
\end{proof}

We recall the definition of rational contractions (see also \cite{MR1786494}).

\begin{definition}\label{df:cont}
A rational map $f \colon X \dashrightarrow Y$ over $U$ is a \emph{rational contraction} if for some resolution (equivalently, for any resolution) $(p,q) \colon W \to X \times Y$ of $f$ such that $W$ is nonsingular, it holds that every $p$-exceptional effective divisor $E$ on $W$ satisfies 
\begin{align*}
q_* (\cO_W (E)) = \cO_Y.
\end{align*}

An effective divisor $F$ on $X$ is \emph{$f$-fixed} if any effective divisor $D$ on $W$ whose support is contained in the union of $p$-exceptional divisors and the strict transform of $F$ satisfies 
\begin{align*}
q_* (\cO_W (D)) = \cO_Y.
\end{align*}
\end{definition}

\begin{remark}
Consider the following diagram.
\begin{align*}
 \xymatrix@ul{
   & {U}  & X\ar[l]_{\pi_1} \ar@{-->}[dl]^{f} \\
   & Y  \ar[u]^{\pi_2}
  }
\end{align*}
If $f$ is a dominant rational map and $\pi_1$ is an algebraic fibre space,
we can check that $\pi_2$ is also an algebraic fibre space. This follows from \cite[Example 2.1.12]{MR2095471}, which asserts that a projective surjective morphism $g: W \to Z$ is an algebraic fibre space if and only if the function field $\bC(Z)$ is algebraically closed in $\bC(W)$.
\end{remark}

\begin{lemma}\label{lm:pull_back}
Let $\pi_i \colon X_i \to U$ $(i=1,2)$ be algebraic fibre spaces and
$f \colon X_1 \dashrightarrow X_2$ be a rational contraction over $U$.
Then, for a Cartier divisor $A$ on $X_2$ and an $f$-fixed divisor $F$ on $X_1$,
we obtain
\begin{align*}
{\pi_1} _* (f^* (A) + F) \simeq {\pi_2} _*  (A).
\end{align*}
\end{lemma}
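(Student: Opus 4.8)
The plan is to prove the stated isomorphism by realizing both sheaves as subsheaves of the constant sheaf $\underline{K}$ on $U$ attached to the common function field $K := \bC(X_1) = \bC(W) = \bC(X_2)$, and to check that they coincide on sections over every affine open $V \subseteq U$. Fix a resolution $(p,q)\colon W \to X_1 \times X_2$ of $f$ with $W$ nonsingular, so that $p\colon W \to X_1$ and $q\colon W \to X_2$ are proper birational and $\pi_W := \pi_1\circ p = \pi_2 \circ q$. By \pref{lm:rat_pull_back} we may represent $f^*A$ by the divisor $p_*(q^*A)$. Two elementary identities drive the argument: first, since $A$ is Cartier and $q_*\cO_W = \cO_{X_2}$ (as $q$ is proper birational and $X_2$ is normal), the projection formula gives $q_*\cO_W(q^*A) = \cO_{X_2}(A)$ and hence $\pi_{W*}\cO_W(q^*A) \simeq \pi_{2*}\cO_{X_2}(A)$; second, for a non-$p$-exceptional prime $\Gamma$ with $p(\Gamma) = P$ one has $\ord_\Gamma(q^*A) = \ord_P(f^*A)$ and $\ord_\Gamma(\tilde F) = \ord_P(F)$, where $\tilde F$ is the strict transform of $F$. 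It therefore suffices to prove $\pi_{1*}\cO_{X_1}(f^*A + F) \simeq \pi_{W*}\cO_W(q^*A)$.

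Fix an affine $V \subseteq U$ and abbreviate $X_1^0 = \pi_1^{-1}(V)$, $X_2^0 = \pi_2^{-1}(V)$, $W^0 = \pi_W^{-1}(V) = q^{-1}(X_2^0)$. Over $V$ the two sides become the following subsets of $K$:
\begin{align*}
(L) &= \{\phi \in K : \operatorname{div}_{X_1^0}(\phi) + f^*A + F \ge 0\}, \\
(R) &= \{\phi \in K : \operatorname{div}_{W^0}(\phi) + q^*A \ge 0\}.
\end{align*}
Using the order identities above and the bijection between prime divisors of $X_1$ and non-$p$-exceptional primes of $W$, membership in $(L)$ is equivalent to the inequalities $\ord_\Gamma(\phi) + \ord_\Gamma(q^*A) + \ord_\Gamma(\tilde F) \ge 0$ ranging over non-$p$-exceptional primes $\Gamma$ of $W^0$, while membership in $(R)$ amounts to $\ord_\Gamma(\phi) + \ord_\Gamma(q^*A) \ge 0$ over all primes. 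The inclusion $(R)\subseteq(L)$ is immediate since $\tilde F \ge 0$: restricting to non-$p$-exceptional $\Gamma$ and adding the nonnegative term $\ord_\Gamma(\tilde F)$ preserves the inequality.

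The substance of the proof is the reverse inclusion $(L)\subseteq(R)$, where the $f$-fixed hypothesis is essential. Given $\phi \in (L)$, consider the divisor $\operatorname{div}_{W^0}(\phi) + q^*A$ and write its negative part as $G$. The defining inequalities of $(L)$ show that along every non-$p$-exceptional prime the negative coefficient is bounded by $\ord_\Gamma(\tilde F)$; hence $G$ is effective and supported on the union of the $p$-exceptional divisors and $\Supp(\tilde F)$, which is exactly the locus allowed in \pref{df:cont}. Since $F$ is $f$-fixed, $q_*\cO_W(G) = \cO_{X_2}$, so by the projection formula $q_*\cO_W(q^*A + G) = \cO_{X_2}(A)$. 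By construction $\operatorname{div}_{W^0}(\phi) + q^*A + G \ge 0$, i.e. $\phi \in H^0(W^0, \cO_W(q^*A + G)) = H^0(X_2^0, \cO_{X_2}(A))$, which says $\operatorname{div}_{X_2^0}(\phi) + A \ge 0$. Finally the identity $q_*\cO_W(q^*A) = \cO_{X_2}(A)$ shows this last condition is equivalent to $\phi \in (R)$, completing the inclusion.

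I expect the main obstacle to lie in the bookkeeping of this last step: one must verify carefully that the negative part $G$ is supported only on the $p$-exceptional and strict-transform locus (so that the $f$-fixed condition applies), and that the projection-formula manipulations remain valid after restricting over the affine $V$. Once $(L)=(R)$ is established for every affine $V$, the two subsheaves of $\underline{K}$ agree on an affine cover and hence coincide, yielding the desired natural isomorphism $\pi_{1*}\cO_{X_1}(f^*A+F) \simeq \pi_{2*}\cO_{X_2}(A)$.
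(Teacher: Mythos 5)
Your argument is correct in substance and runs on the same three ingredients as the paper's proof---the resolution $(p,q)\colon W \to X_1\times X_2$, the projection formula for the Cartier divisor $A$, and the $f$-fixed condition applied to an effective divisor supported on the union of the $p$-exceptional divisors and the strict transform of $F$---but the mechanics differ. The paper avoids all valuation bookkeeping with a single divisor identity: since $f^*A = p_*(q^*A)$, the difference $p^*(f^*A) - q^*A$ is $p$-exceptional, so one may write $p^*(f^*A) + E_1 = q^*A + E_2$ with $E_1, E_2$ effective and $p$-exceptional; pushing forward along $\pi_1\circ p = \pi_2\circ q$, the projection formula absorbs $E_1$ on the $X_1$ side, and the $f$-fixed condition absorbs $E_2 + p^*F$ on the $X_2$ side. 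Your prime-by-prime comparison of sections is longer, but it buys one genuine advantage: you never pull back $f^*A + F$ to $W$, whereas the paper's identity needs $f^*A$ to be $\bQ$-Cartier (guaranteed here only by the standing $\bQ$-factoriality assumption on $X_1$) and silently handles the attendant $\bQ$-divisor rounding in the projection formula.

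There is, however, one flaw you must repair: in two places you assume $f$ is birational, which is not among the hypotheses. A rational contraction in this paper may be of fibre type (and the lemma is applied in that generality, e.g. inside \pref{pr:div_map} and \pref{th:mov_cont}), in which case the inclusion $\bC(X_2)\subset\bC(X_1)$ is strict, so your opening claim $\bC(X_1) = \bC(W) = \bC(X_2)$ is false, and $q\colon W \to X_2$ is not birational, so your stated justification of $q_*\cO_W = \cO_{X_2}$ collapses. Both points are repairable without changing the structure of the argument: $q_*\cO_W = \cO_{X_2}$ follows from \pref{df:cont} applied to the zero divisor (an effective, trivially $p$-exceptional divisor), and all you need of the function fields is the inclusion $\bC(X_2) \hookrightarrow \bC(W) = \bC(X_1) = K$ induced by $q$. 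The projection formula $q_*\cO_W(q^*A) = \cO_{X_2}(A)$ then shows that your set $(R)$, a priori a subset of $K$, actually lies in $q^*\bC(X_2)$ and coincides with $H^0(X_2^0, \cO_{X_2}(A))$, which is all that your two inclusions $(R)\subseteq(L)$ and $(L)\subseteq(R)$ require. With these corrections your proof goes through, including in the fibre-type case.
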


\begin{proof}
Consider the resolution 
$(p,q) : W \to X_1 \times X_2$
 of $f$ as in the \pref{df:cont}.
Then 
$p^*(f^*(A)) - q^* A$ is $p$-exceptional.
Hence there exist effective $p$-exceptional divisors $E_1$ and $E_2$ such that
\begin{align*}
p^*(f^*(A)) + E_1 =  q^* A + E_2.
\end{align*}
Then we obtain 
\begin{align*}
 {\pi_1} _* (f^* (A) + F ) \\
  \simeq  {\pi_1}_* p_* (p^*(f^*(A)) + p^* (F) + E_1 ) \\
  =  {\pi_2}_* q_* (q^* A + E_2 + p^* (F) )  \\
  \simeq {\pi_2} _*  (A),
\end{align*}
where the final equality follows from the projection formula and the assumption that $F$ is $f$-fixed.
\end{proof}

\begin{definition}
For a line bundle $L$ on $X$, we define the section algebra over $U$ by
\begin{align*}
\R_{\pi} (X, L) = \bigoplus _{m \in \bZ_{\ge 0}} \pi_{*} (L^m).
\end{align*}
\end{definition}
If $\R_{\pi} (X, L)$ is a finitely generated $\cO_U$-algebra, 
we obtain the rational map to the projective variety over $U$
\begin{align*}
\varphi_L \colon X \dashrightarrow \Proj_{U} (\R_{\pi} (X, L)).
\end{align*}

Note that $\Proj_{U} (\R_{\pi} (X, L))$ is determined by the class of $L$ in $\Pic(X/U)$ up to isomorphisms over $U$ (see \cite[Chapter 2, lemma 7.9)]{MR0463157}).

\begin{remark}\label{rm:proj}
The rational map $\varphi_L$ has the following properties.
\begin{enumerate}

\item
For any $m>0$
there exists a natural isomorphism 
\begin{align*}
\Proj_{U} (\R_{\pi} (X, L)) \simeq \Proj_{U} (\R_{\pi} (X, L^m)),
\end{align*}
 which commutes with the rational maps
$\varphi_L $ and $\varphi_{L^m}$.

\item
For a sufficiently divisible $m>0$, there exists the natural closed immersion
\begin{align*}
\Proj_{U} (\R_{\pi} (X, L^m)) \hookrightarrow \Proj_{U} (\Sym ({\pi} _*  (L^m)) ),
\end{align*}
which commutes with $\varphi_{L^m}$ and $\Phi_{L^m / U}$.
This follows from \cite[1.5. LEMMA]{MR1786494}, which asserts that if $A = \bigoplus_{l \in \bN} A_l$ is a \bN-graded module which is a finitely generated algebra over $A_0$, then for a sufficiently divisible 
$l >0$ the natural map 
$\Sym_k(A_l) \to A_{lk}$ is surjective for any $k>0$.

\end{enumerate}
\end{remark}

The following proposition is the relative version of \cite[1.6. LEMMA]{MR1786494}.
\begin{proposition}\label{pr:div_map}
If $\R_{\pi} (X, L)$ is finitely generated,
then
$\varphi_L \colon X \dashrightarrow \Proj_{U} (\R_{\pi} (X, L))$ is a rational contraction and $L \sim_{\bQ,U} \varphi_L ^* (A) + E$ holds for some relatively ample $\bQ$-divisor $A$ on $\Proj_{U} (\R_{\pi} (X, L))$ over $U$
and a $\varphi_L$-fixed divisor $E$.
Conversely, consider a rational contraction $f:X \dashrightarrow Y$ over $U$ and
a Cartier divisor $A$ on $Y$ which is relatively ample over $U$.
Then
\begin{enumerate}
\item for any $f$-fixed divisor $F$, the map $f$ is equal to $\varphi_{f^*(A) + F}$ up to isomorphisms over $U$, and
\item $f$ is regular if and only if $f^*(A)$ is $\pi$-semiample.
\end{enumerate}
\end{proposition}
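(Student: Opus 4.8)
The plan is to reduce everything to the study of the relative linear system defined by a sufficiently divisible power of $L$, carry out the geometric analysis on a fixed resolution, and then read off both assertions. Writing $Y \coloneqq \Proj_U(\R_{\pi}(X,L))$, by \pref{rm:proj} I may replace $L$ by $L^m$ for $m$ sufficiently divisible without changing $Y$ nor, up to isomorphism over $U$, the map $\varphi_L$; after this replacement $\R_{\pi}(X,L^m)$ is generated in degree one, so that $Y$ sits as a closed subscheme $Y \hookrightarrow \Proj_U(\Sym(\pi_* L^m))$ with $\cO_Y(1)$ relatively very ample, and $\varphi_{L^m}$ is the composite of $\Phi_{L^m/U}$ with this embedding. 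First I would fix a resolution $(p,q)\colon W \to X\times Y$ of $\varphi_L$ with $W$ smooth and write $p^* L^m = q^*\cO_Y(1) + F$, where $q^*\cO_Y(1)$ is the relatively base-point-free moving part and $F\ge 0$ is the relative fixed part. Setting $A$ to be the relatively ample $\bQ$-divisor class $\tfrac1m \cO_Y(1)$ and $E \coloneqq \tfrac1m p_* F$, the equality $p^* L^m = q^*\cO_Y(1)+F$ pushes forward to $L \sim_{\bQ,U} \varphi_L^*(A) + E$.

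The heart of the argument is to verify that $\varphi_L$ is a rational contraction and that $E$ is $\varphi_L$-fixed; both reduce, via \pref{df:cont}, to the single statement that for every effective divisor $E'$ on $W$ supported on the $p$-exceptional locus together with the strict transform of $\Supp E$, one has $q_*\cO_W(E') = \cO_Y$, where $\pi_Y\colon Y\to U$ is the structure map. I would prove this by showing that $E'$ contributes no new relative sections after twisting by the moving part: for $k \gg 0$, any relative section $\phi$ of $k\,q^*\cO_Y(1)+E'$ is also a relative section of $k\,p^* L^m + E'_{\mathrm{exc}}$, where $E'_{\mathrm{exc}}$ is the $p$-exceptional part of $E'$; since $p_*\cO_W(E'_{\mathrm{exc}}) = \cO_X$, the projection formula identifies $\phi$ with a relative section of $L^{mk}$ on $X$, and pulling this section back shows $\phi$ lies in the relative sections of $k\,p^* L^m = k\,q^*\cO_Y(1) + kF$; because $kF$ is exactly the relative fixed part of this system, $\phi$ is in fact a relative section of $k\,q^*\cO_Y(1)$. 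This yields $(\pi_Y)_*(\cO_Y(k)\otimes q_*\cO_W(E')) = (\pi_Y)_*\cO_Y(k)$ for all $k\gg 0$, and since $\cO_Y(1)$ is relatively ample and $\cO_Y\subseteq q_*\cO_W(E')$, this forces $q_*\cO_W(E')=\cO_Y$ (here I use $q_*\cO_W=\cO_Y$, as $Y$ is normal and $q$ is an algebraic fibre space). I expect this step, namely disentangling the contribution of $E'$ from the fixed part $F$ and upgrading equality of relative global sections to equality of sheaves via relative ampleness, to be the main obstacle; everything else is bookkeeping.

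For the converse I would argue formally. Given a rational contraction $f\colon X\dashrightarrow Y$ over $U$, a relatively ample Cartier divisor $A$ on $Y$, and an $f$-fixed divisor $F$, \pref{lm:pull_back} applied in each degree, noting that $mF$ is again $f$-fixed, yields $\pi_*\bigl(m(f^*A+F)\bigr)\cong (\pi_Y)_*(mA)$, hence an isomorphism of section algebras $\R_{\pi}(X,f^*A+F)\cong \R_{\pi_Y}(Y,A)$ over $U$; as $A$ is relatively ample this algebra is finitely generated with $\Proj_U\cong Y$ and $\varphi_A=\mathrm{id}_Y$, so $\varphi_{f^*A+F}$ is defined, and comparing the two maps on the resolution (both induced by the relatively ample $A$ on $Y$) shows it equals $f$ up to isomorphism over $U$; this is assertion (1). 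For assertion (2) I would take $F=0$, which is $f$-fixed precisely because $f$ is a rational contraction, so that $f=\varphi_{f^*A}$. If $f$ is regular then $f^*A$ is the pullback of a relatively ample divisor under a morphism over $U$, hence $\pi$-semiample by \pref{df:foundation}(6); conversely, $f^*A$ being $\pi$-semiample means $m f^*A$ is relatively globally generated for some $m>0$, i.e.\ $\Phi_{mf^*A/U}=\varphi_{f^*A}=f$ has no points of indeterminacy, so $f$ is regular.
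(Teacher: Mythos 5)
Your strategy for the first part is, in substance, the relative version of Hu--Keel's proof of \cite[1.6. LEMMA]{MR1786494} written out in full: you reduce to degree-one generation, split $p^*L^m$ into moving and fixed parts on a resolution, and verify the contraction/fixedness conditions by a section-comparison argument. The paper takes a shorter route to the same place: it reduces via \cite[1.5. LEMMA]{MR1786494}, performs the splitting on $X$ itself using \pref{lm:mov+fix}, and then cites the proof of \cite[1.6. LEMMA]{MR1786494} ``without essential changes''. Your treatment of the converse (identifying section algebras degree by degree via \pref{lm:pull_back}, and taking $F=0$ for (2), which is $f$-fixed exactly because $f$ is a rational contraction) agrees with the paper's.

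There is, however, one genuine gap, at the last step of your main argument: you deduce $q_*\cO_W(E')=\cO_Y$ from the equality of pushforwards to $U$ ``using $q_*\cO_W=\cO_Y$, as $Y$ is normal and $q$ is an algebraic fibre space''. But $q_*\cO_W=\cO_Y$ is precisely the case $E'=0$ of the statement you are proving (connectedness of the fibres of $\varphi_L$ is part of being a rational contraction), and it is not formal: properness, surjectivity and normality of the target do not give it --- a finite surjection of degree $>1$ onto a normal variety is a counterexample, and such behaviour is only excluded here because $Y$ is the $\Proj$ of the \emph{full} section algebra. So as written the argument is circular. The repair lies inside your own framework: for $k\gg 0$ one has the chain of natural inclusions (all compatible inside the sheaf of rational functions)
\begin{align*}
(\pi_Y)_*\cO_Y(k) \subseteq (\pi_Y)_* q_* \cO_W\bigl(k\,q^*\cO_Y(1)+E'\bigr) \subseteq \pi_* p_* \cO_W\bigl(k\,p^*L^m+E'_{\mathrm{exc}}\bigr) = \pi_* L^{mk} \simeq (\pi_Y)_*\cO_Y(k),
\end{align*}
where the middle equality is the projection formula together with $p_*\cO_W(E'_{\mathrm{exc}})=\cO_X$, and the final isomorphism holds for $k\gg 0$ because $\R_{\pi}(X,L^m)$ is finitely generated and generated in degree one (saturation in large degrees). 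Hence all inclusions are equalities, which yields $(\pi_Y)_*\bigl(\cO_Y(k)\otimes q_*\cO_W(E')\bigr)=(\pi_Y)_*\cO_Y(k)$ for $k\gg 0$ without ever invoking $q_*\cO_W=\cO_Y$; then relative Serre vanishing applied to $0\to\cO_Y\to q_*\cO_W(E')\to Q\to 0$ forces $Q=0$, as you intended. Separately, a presentational point: your assertion that ``$kF$ is exactly the relative fixed part'' of $|k\,p^*L^m|$ should explicitly invoke degree-one generation (every relative section of $L^{mk}$ is a polynomial in relative sections of $L^m$, each of which vanishes along $F$); this is the only place where the finite generation hypothesis enters, and it deserves to be visible.
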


\begin{proof}
For the first part,
by replacing $L$ with a multiple, we may assume that the natural map $\Sym ({\pi} _*  (L^m)) \to \R_{\pi} (X, L^m)$ is surjective and that $L= \cO_X (D)$ for an effective divisor $D$ on $X$ by \cite[1.5. LEMMA]{MR1786494}.
We can write $D = M + F$ for a $\pi$-movable divisor $M$ and an effective divisor $F$ which satisfy $\pi_* \cO_X (D) \simeq \pi_*\cO_X (M)$ by \pref{lm:mov+fix}.
Then the arguments in the proof of \cite[1.6.LEMMA]{MR1786494}, which is written for the case $U= \Spec(\bC)$, implies that $M =\varphi_L^* A$ holds for some $\pi$-ample divisor $A$ on $\Proj_{U} (\R_{\pi} (X, L))$ and that $F$ is $\varphi_L$-fixed without essential changes.

For the second part, it is obvious that $\varphi_{f^*(A)}$ coincides $f$. However, it also follows that 
$\varphi_{f^*(A)} = \varphi_{f^*(A)+F}$ by \pref{lm:pull_back}. This shows (1).
(2) follows from the construction of  $\varphi_{f^*(A)}$.
\end{proof}

\begin{definition}
Take $D_1, D_2 \in \Pic(X/U)_{\bQ}$ whose section algebras are finitely generated over $U$.
Then $D_1$ and $D_2$ are said to be \emph{Mori equivalent} if $\varphi_{D_1}$ coincides with $\varphi_{D_2}$ up to isomorphism.

Assume $\Pic(X/U)_{\bQ} \simeq \N^1 (X/U)_{\bQ}$.
A \emph{Mori chamber} is the closure of a Mori equivalent class with non-empty interior in $\Pic(X/U)_{\bQ}$.
\end{definition}


We can easily show the following properties of SQMs over $U$.
\begin{remark}\label{rm:SQM}
Let 
$g \colon X \dashrightarrow Y$
be an SQM over $U$.
Then
\begin{enumerate}
\item 
$g$ induces an isomorphism $g^* \colon \N^1(Y/U)_{\bQ} \to \N^1(X/U)_{\bQ}$.
\item
$g^* (\NE^1 (Y/U) ) = \NE^1 (X/U). $
\item
$g^* (\Mov (Y/U) ) = \Mov(X/U). $

\item 
Let $f \colon Y \dashrightarrow Z$ be a rational contraction.
For any Cartier divisor $D$ on $Z$ such that $f^* D$ is Cartier,
we have $(f \circ g) ^*(D) = g^*( f^* (D))$.
\end{enumerate}

\end{remark}


\section{Definition of Mori dream morphisms}
Let $\pi \colon X \to U $ be as in \pref{df:foundation}.

\begin{definition}\label{df:MDM}
The morphism $\pi$ is said to be a \emph{Mori dream morphism} if it satisfies the following conditions.

\begin{enumerate}
\item[(1)] $X$ is $\bQ$-factorial.
\item[(2)]
$\Pic(X/U)_{\bQ} \simeq \N ^{1}(X/U)_{\bQ} $.
\item[(3)]
 $\Nef(X/U) $ is a rational polyhedral cone generated by finitely many $\pi$-semiample divisors.
\item[(4)]
There exists a finite collection of SQMs
$ g_i\colon X \dashrightarrow X_i$ ($i=1 \dots r$)
over $U$
such that every $X_i$ satisfies $(1)$, $(2)$, $(3)$
and
\begin{align*}
\Mov(X/U) = \bigcup _{i=1} ^{r} g_i^{*} (\Nef(X_i /U)).
\end{align*}
\end{enumerate}

\end{definition}

We can replace some of the conditions in the definition of MDM as follows.

\begin{proposition}\label{pr:MDM2}
In \pref{df:MDM}, we can replace $(2)$ and $(3)$ with the following $(2')$ and $(3')$.
\begin{enumerate}
\item[$(2')$]
Every $\pi$-nef divisor is $\pi$-semiample.

\item[$(3')$]
$\Nef(X/U) $ is a rational polyhedral cone.
\end{enumerate}

\end{proposition}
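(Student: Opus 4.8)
The plan is to prove that the two formulations of \pref{df:MDM} define the same class of morphisms by establishing, under condition $(1)$ and the standing assumption that $\pi$ is an algebraic fibre space, the equivalence of the pair $(2)\wedge(3)$ with the pair $(2')\wedge(3')$. Once this local equivalence is in hand, the matching of condition $(4)$ in the two formulations is automatic: each $X_i$ appearing in $(4)$ is again a $\bQ$-factorial algebraic fibre space over $U$ (an SQM $g_i$ is an isomorphism in codimension one, so $X_i$ is normal, shares the function field of $X$, and satisfies $(\pi_i)_*\cO_{X_i}=\cO_U$), and applying the equivalence to each $X_i$ shows that requiring $(1),(2),(3)$ of the $X_i$ is the same as requiring $(1),(2'),(3')$.

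First I would treat the implication $(2)\wedge(3)\Rightarrow(2')\wedge(3')$. Here $(3')$ is immediate, since a cone generated by finitely many $\pi$-semiample divisors is in particular rational polyhedral. For $(2')$, let $D$ be a $\pi$-nef divisor, so $[D]\in\Nef(X/U)$; by $(3)$ we may write $[D]=\sum_i a_i[A_i]$ in $\N^1(X/U)_{\bQ}$ with $a_i\in\bQ_{\ge 0}$ and each $A_i$ $\pi$-semiample. By $(2)$ the natural surjection $\Pic(X/U)_{\bQ}\to\N^1(X/U)_{\bQ}$ is an isomorphism, so this numerical identity lifts to $D\sim_{\bQ,U}\sum_i a_i A_i$. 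Clearing denominators and using that a nonnegative integral combination of $\pi$-semiample divisors (which become relatively globally generated after a common multiple) is again relatively globally generated, we conclude that $D$ is $\pi$-semiample.

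For the converse $(2')\wedge(3')\Rightarrow(2)\wedge(3)$, the derivation of $(3)$ is easy: by $(3')$ the cone $\Nef(X/U)$ is rational polyhedral, hence generated by finitely many rational nef classes, and each such generator is $\pi$-semiample by $(2')$. The substantive point, which I expect to be the main obstacle, is deducing $(2)$ from $(2')$. Surjectivity of $\Pic(X/U)_{\bQ}\to\N^1(X/U)_{\bQ}$ holds by definition, so it suffices to prove injectivity: if $D\equiv_U 0$ then $D\sim_{\bQ,U}0$. The key observation is that a numerically $U$-trivial divisor is vacuously $\pi$-nef, hence $\pi$-semiample by $(2')$; so by the definition of $\pi$-semiampleness (\pref{df:foundation}) some multiple $mD$ pulls back a relatively ample $\bQ$-divisor along a morphism $\varphi\colon X\to Y$ over $U$, that is $mD\sim_{\bQ,U}\varphi^*H$ with $H$ relatively ample over $U$.

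To finish this last step I would argue as follows, and this is where the algebraic fibre space hypothesis $\pi_*\cO_X=\cO_U$ is essential. Since $mD\equiv_U 0$, every $\pi$-fibral curve $C$ satisfies $H\cdot\varphi_* C=mD\cdot C=0$, and relative ampleness of $H$ forces $\varphi_* C=0$; thus $\varphi$ contracts every fibral curve, so $\dim\varphi(X_u)=0$ for all $u$ and $Y\to U$ is quasi-finite, hence finite since it is projective. Replacing $\varphi$ by its Stein factorization $X\to Y'\to Y$ we may assume $\varphi_*\cO_X=\cO_{Y'}$ with $Y'\to U$ finite and $mD\sim_{\bQ,U}\varphi^*H'$ for a relatively ample $\bQ$-divisor $H'$ on $Y'$. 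Then $(Y'\to U)_*\cO_{Y'}=\pi_*\cO_X=\cO_U$ together with the normality of $U$ forces $Y'\cong U$, whence $\varphi^*H'\sim_{\bQ,U}0$ and therefore $D\sim_{\bQ,U}0$. This yields injectivity and completes $(2)$. The delicate part to write out carefully is precisely this chain showing that a $\pi$-semiample, numerically $U$-trivial divisor is $U$-linearly trivial.
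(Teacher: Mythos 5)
Your proof is correct and takes essentially the same route as the paper: both reduce everything to showing that $(2')$ implies $(2)$, take a divisor $D \equiv_U 0$, use $(2')$ to write it as the pullback of a relatively ample $\bQ$-divisor along a morphism over $U$, observe that this morphism contracts every $\pi$-fibral curve, and conclude that $D$ is pulled back from $U$, hence trivial in $\Pic(X/U)_{\bQ}$. The only difference is cosmetic: where the paper invokes the factorization of that morphism through $\pi$ directly (rigidity, using normality of $U$ and $\pi_*\cO_X=\cO_U$), you reprove this fact via Stein factorization together with the observation that a finite morphism $Y'\to U$ whose structure sheaf pushes forward to $\cO_U$ is an isomorphism.
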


\begin{proof}
It is obvious that $(2)$ and $(3)$ imply $(2')$ and $(3')$. 
Moreover, if $(2')$ and $(3')$ hold, so does $(3)$.
Hence it is sufficient to show that $(2')$ implies $(2)$.
We prove that the natural surjection $ \Pic(X/U)_{\bQ} \to \N ^{1}(X/U)_{\bQ} $ is injective.
Take $[D] \in \Pic(X/U)_{\bQ}$ such that $D \equiv_U 0$.
Since $D$ is $\pi$-semiample, we obtain a morphism $\Phi \colon X \to Y$ over $U$ such that $D = \Phi ^* (A) $ for some ample $\bQ$-divisor $A$ on $Y$ over $U$.
By the numerical property of $D$, any curve contracted by $\pi$ is also contracted by $\Phi$.
Hence, by the normality of $U$,
there exists a morphism $g \colon U \to Y$ which satisfies the following diagram.
\begin{align*}
 \xymatrix@ul{
   & {U} \ar[d]_{g} & X\ar[l]_{\pi} \ar[dl]^{\Phi} \\
   & Y 
  }
\end{align*}
Hence we obtain that $D = \Phi ^* (A) = \pi^*( g^*(A))$ holds, so that $[D]=0 \in \Pic(X/U)$.
\end{proof}


%
%
%


\begin{remark}\label{rm:pullback_face}
Let $\pi \colon X \to U $ be an MDM.
For an algebraic fibre space $f \colon X \to Y$ over $U$,
the cone $f^* \Nef(Y/U)$ is a face of $\Nef(X/U)$.
Indeed, we can check 
$f^* \Nef(Y/U) = \Nef(X/U) \cap \NE(f) ^ {\perp}$
as in the proof of \pref{pr:MDM2},
where $\NE(f) \subset \N _1(X/U)$ is the cone generated by the curves that are contracted by $f$.
\end{remark}

\section{Fan structure on $\Mov(X/U)$}

Throughout this section we let $\pi \colon  X \to U$ be an MDM
and
$Y$ be a normal quasi-projective variety which is projective over $U$.

\begin{proposition}\label{pr:factorize}
Let $f \colon X \dashrightarrow Y$ be a rational contraction over $U$.
Then there exists an SQM $g_i \colon X \dashrightarrow X_i$ as in \pref{df:MDM} such that
$h \coloneqq f \circ g_i ^{-1}$
is a morphism.
\begin{align*}
\xymatrix{
    X \ar@{-->}[r]^{g_i} \ar@{-->}[dr]_{f} & X_i \ar[d]^{h} \\
   & Y  
  }
\end{align*}
\end{proposition}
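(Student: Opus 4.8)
The plan is to reduce the statement to the chamber decomposition of $\Mov(X/U)$ provided by condition $(4)$ of \pref{df:MDM}. Since $Y$ is projective over $U$, I would fix a Cartier divisor $A$ on $Y$ that is relatively ample over $U$ and track the class $[f^*A] \in \N^1(X/U)_{\bQ}$. The first task is to check that this class lies in $\Mov(X/U)$. Because the indeterminacy locus of a rational map from a normal variety to a projective variety has codimension at least two, $f$ is a morphism on an open set $V \subset X$ with $\codim(X \setminus V) \ge 2$; on $V$ the class $f^*A$ is the honest pullback $(f|_V)^*A$ of the relatively ample $A$ along the morphism $f|_V \colon V \to Y$ over $U$, hence $\pi|_V$-semiample by \pref{df:foundation}. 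By the definition of $\pi$-movability there (equivalently by \pref{cr:base_movable}), this shows that $f^*A$ is $\pi$-movable, so $[f^*A] \in \Mov(X/U)$.

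By condition $(4)$ of \pref{df:MDM}, there is then an index $i$ and an SQM $g_i \colon X \dashrightarrow X_i$ with $[f^*A] \in g_i^*(\Nef(X_i/U))$. I set $h \coloneqq f \circ g_i^{-1} \colon X_i \dashrightarrow Y$. Using \pref{rm:SQM}(4) (applied to the SQM $g_i^{-1}$ and the rational contraction $f$) together with the fact that $g_i^*$ is an isomorphism on $\N^1$ by \pref{rm:SQM}(1), I compute $[h^*A] = (g_i^*)^{-1}[f^*A] \in \Nef(X_i/U)$; here I would pass to a multiple of $A$ if necessary so that the relevant pullbacks are Cartier, using the $\bQ$-factoriality of $X$ and $X_i$. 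Since $X_i$ satisfies conditions $(2)$ and $(3)$ of \pref{df:MDM}, it satisfies $(2')$ by \pref{pr:MDM2}, so every $\pi_i$-nef divisor on $X_i$ is $\pi_i$-semiample. In particular $h^*A$ is semiample over $U$.

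It remains to deduce that $h$ is a morphism, for which I would apply \pref{pr:div_map}(2): a rational contraction whose pullback of a relatively ample divisor is semiample is regular. The step I expect to be the main obstacle is verifying that $h$ is itself a rational contraction. I would prove this by taking a common resolution $(p,q) \colon W \to X_i \times Y$ of $h$ that simultaneously resolves $g_i$, and observing that, since $g_i$ is an isomorphism in codimension one, a prime divisor on $W$ is $p$-exceptional over $X_i$ if and only if it is exceptional over $X$ under the induced map $W \to X$. The rational contraction property of $f$ with respect to the corresponding resolution then transfers verbatim to give $q_*(\cO_W(E)) = \cO_Y$ for every $p$-exceptional effective divisor $E$, so $h$ is a rational contraction in the sense of \pref{df:cont}. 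Combining this with the semiampleness of $h^*A$, \pref{pr:div_map}(2) shows that $h$ is regular, which is exactly the desired factorization.
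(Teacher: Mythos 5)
Your proof is correct and takes essentially the same route as the paper: pick the SQM $g_i$ whose chamber $g_i^*(\Nef(X_i/U))$ contains $[f^*A]$ (possible because $f^*A$ is $\pi$-movable), and conclude regularity of $h = f \circ g_i^{-1}$ via \pref{pr:div_map}~(2). The details you supply---codimension-two indeterminacy giving movability of $f^*A$, transfer of the rational-contraction property across the SQM, and nef implying semiample on $X_i$---are exactly the ones the paper delegates to the last paragraph of the proof of \cite[1.11 PROPOSITION]{MR1786494}.
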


\begin{proof}
Take an SQM $g_i$ such that the pull-back of an ample divisor on $Y$ over $U$ by $f$ is contained in $g_i^{*} (\Nef(X_i /U))$.
Then \pref{pr:div_map} implies that $f \circ  g_i^{-1} $ is a regular map.
For details see the last paragraph of the proof of \cite[1.11 PROPOSITION]{MR1786494}, where \pref{pr:factorize} is proved for the case $U = \Spec \bC$.
It applies to the situation of \pref{pr:factorize} without essential changes.
\end{proof}
 
\begin{corollary}\label{cr:inj}
Under the assumptions in \pref{pr:factorize}, the map
\begin{align*}
f^* \colon \N^1(Y/U)_{\bQ} \to \N^1 (X/U)_{\bQ}
\end{align*}
is injective.
\end{corollary}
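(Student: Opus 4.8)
The plan is to deduce the injectivity of $f^* \colon \N^1(Y/U)_{\bQ} \to \N^1(X/U)_{\bQ}$ from \pref{pr:factorize} together with the properties of SQMs collected in \pref{rm:SQM}. By \pref{pr:factorize}, there exists an SQM $g_i \colon X \dashrightarrow X_i$ such that $h \coloneqq f \circ g_i^{-1} \colon X_i \to Y$ is an honest morphism over $U$, so $f = h \circ g_i$ as rational maps. The key observation is that pullback is functorial in the appropriate sense: on the level of $\N^1(\,\cdot\,/U)_{\bQ}$ we should have $f^* = g_i^* \circ h^*$.

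First I would establish this factorization of $f^*$ carefully. The composite $g_i^* \circ h^*$ makes sense because $g_i^*$ is defined on $\N^1(X_i/U)_{\bQ}$ by \pref{lm:rat_pull_back} (and is in fact an isomorphism by \pref{rm:SQM}(1)), while $h^*$ is ordinary pullback along the morphism $h$. That the equality $f^* = g_i^* \circ h^*$ holds at the level of $\N^1$ is exactly the content of the compatibility statement \pref{rm:SQM}(4): for a Cartier divisor $D$ on $Y$ with $h^*D$ Cartier, one has $(h \circ g_i)^*(D) = g_i^*(h^*(D))$, and $f = h \circ g_i$. One should note that $h$ being a morphism means $h^*$ is the usual pullback $\Pic(Y/U) \to \Pic(X_i/U)$, which descends to $\N^1$ by the first part of \pref{lm:rat_pull_back}.

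Once the factorization $f^* = g_i^* \circ h^*$ is in hand, injectivity follows by showing each factor is injective. The map $g_i^*$ is an isomorphism of $\N^1$-spaces by \pref{rm:SQM}(1), hence injective. For $h^*$, since $h \colon X_i \to Y$ is a morphism of normal quasi-projective varieties and—because $g_i$ is an SQM—the target $X_i$ is again an MDM, in particular an algebraic fibre space; thus $h$ is itself an algebraic fibre space over $U$ (the generic fibre of $f$ is connected as $f$ is a rational contraction, and this passes to $h$). Then \pref{lm:rat_pull_back} gives that $h^*$ on $\N^1$ is injective for the birational case, but more directly, for a genuine morphism $h$ between algebraic fibre spaces one invokes \pref{lm:known_inj}, which yields the injection $h^* \colon \Pic(Y/U) \hookrightarrow \Pic(X_i/U)$; combined with condition $(2)$ in \pref{df:MDM} identifying $\Pic(\,\cdot\,/U)_{\bQ}$ with $\N^1(\,\cdot\,/U)_{\bQ}$ for the MDM $X_i$, this gives injectivity of $h^*$ on $\N^1$.

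The main obstacle I anticipate is the verification that $h$ really is an algebraic fibre space, so that \pref{lm:known_inj} applies: one needs that $h$ is projective, surjective, and has connected fibres (equivalently $\bC(Y)$ algebraically closed in $\bC(X_i)$). Surjectivity and connectedness should follow from $f$ being a dominant rational contraction together with the remark after \pref{df:cont}, which transfers the algebraic-fibre-space property along dominant rational maps; projectivity of $h$ holds since $Y$ is projective over $U$ and $X_i$ is as well. Granting this, the argument closes, and it is worth remarking that this corollary is the natural relative analogue of the injectivity statement used in the absolute Mori dream space setting of \cite{MR1786494}.
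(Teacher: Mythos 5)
Your proof is correct and takes essentially the same route as the paper, which likewise uses \pref{pr:factorize} and \pref{rm:SQM} to reduce to the case where $f$ is a morphism and then deduces injectivity from \pref{lm:known_inj} combined with the identification $\Pic(X_i/U)_{\bQ} \simeq \N^1(X_i/U)_{\bQ}$ given by condition (2) of \pref{df:MDM}. The algebraic-fibre-space property of $h$ that you flag as the main obstacle is indeed the one point the paper glosses over; it follows most directly from \pref{df:cont} applied with $E=0$, which gives $q_*\cO_W = \cO_Y$ for a resolution dominating $X_i$, and hence $h_*\cO_{X_i} = \cO_Y$.
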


\begin{proof}
By \pref{pr:factorize} and \pref{rm:SQM}, we may assume that $f$ is a morphism over $U$.
Then $f^*(D) \equiv_U 0$ 
implies 
$f^*(D) \sim_{\bQ ,U} 0$, and hence 
$D\sim_{\bQ ,U} 0$  holds
by \pref{lm:known_inj}.
\end{proof}

The following corollary also follows from \pref{pr:factorize}.
\begin{corollary}
There is no SQM of $X$ over $U$ other than $g_1, \dots ,g_r$ which appear in \pref{df:MDM}.
\end{corollary}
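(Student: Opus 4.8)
The plan is to take an arbitrary SQM $g \colon X \dashrightarrow X'$ over $U$ and show that it agrees, up to an isomorphism over $U$, with one of the $g_i$ appearing in \pref{df:MDM}. The first observation is that $g$ is in particular a rational contraction in the sense of \pref{df:cont}: since $g$ is an isomorphism in codimension one, on any common resolution $(p,q)\colon W \to X \times X'$ every $p$-exceptional divisor is also $q$-exceptional, so $q_*(\cO_W(E)) = \cO_{X'}$ holds for every effective $p$-exceptional $E$. Hence \pref{pr:factorize} applies with $f = g$.

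Applying \pref{pr:factorize} produces some $g_i \colon X \dashrightarrow X_i$ among the SQMs of \pref{df:MDM} for which $h \coloneqq g \circ g_i^{-1} \colon X_i \to X'$ is a genuine \emph{morphism}. Because $g$ and $g_i$ are both SQMs, the birational map $h$ is an isomorphism in codimension one, and since $X_i$ and $X'$ are both projective over $U$, the morphism $h$ is projective and birational. Thus $h$ is a small projective birational morphism between the $\bQ$-factorial varieties $X_i$ (by condition $(1)$ of \pref{df:MDM}) and $X'$ (by the definition of an SQM). Once $h$ is shown to be an isomorphism, the identity $g = h \circ g_i$ exhibits $g$ as equal to $g_i$ up to an isomorphism over $U$, which is exactly the assertion.

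The key step is therefore to prove that a small projective birational morphism $h \colon X_i \to X'$ between $\bQ$-factorial varieties is an isomorphism. I would argue by contradiction: if $h$ were not an isomorphism it would contract some curve $C$. Choose an $h$-ample $\bQ$-divisor $A$ on $X_i$, so that $A \cdot C > 0$. Since $X'$ is $\bQ$-factorial, the pushforward $h_* A$ is $\bQ$-Cartier, and because $h$ is small the difference $A - h^*(h_* A)$ is a $\bQ$-Cartier divisor supported on the exceptional locus $\Exc(h)$, which has codimension at least two; having no prime components, it must vanish, so $A = h^*(h_* A)$. The projection formula then gives $A \cdot C = (h_* A) \cdot h_* C = 0$, contradicting $A \cdot C > 0$. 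Alternatively this is a direct application of the negativity lemma \cite[Lemma 3.39]{MR1658959} already used above. Hence $h$ contracts no curve, so it is finite and birational onto a normal variety, and therefore an isomorphism.

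I expect the only non-formal point to be precisely this last lemma; everything else is bookkeeping, namely checking that $g$ qualifies as a rational contraction and that $h$ is genuinely small with both source and target $\bQ$-factorial so that the pushforward argument is legitimate. All the substantive geometry is packaged inside \pref{pr:factorize}, and once $h$ is recognized as a small projective birational morphism of $\bQ$-factorial varieties the conclusion follows from the standard negativity argument above.
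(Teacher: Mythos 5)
Your proof is correct and follows exactly the route the paper intends: the paper offers no written proof beyond the remark that the corollary ``follows from \pref{pr:factorize}'', and your argument is precisely that deduction, with the details filled in (an SQM is a rational contraction, the factorization produces a small projective birational morphism $h \colon X_i \to X'$ between $\bQ$-factorial varieties, and such an $h$ is an isomorphism by the negativity/pushforward argument). No discrepancies with the paper's approach.
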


Let $\cM_{X/U}$ be the set of all faces of the cones $g_i ^* (\Nef(X_i / U ))$ ($i= 1 \dots r$ ).
The following theorem gives the generalization of \cite[1.11 PROPOSITION (3)]{MR1786494}.

\begin{theorem}\label{th:mov_cont}
The set
$\cM_{X/U}$ is a fan whose support is $\Mov(X/U)$.
Moreover, there is a natural bijection $\alpha$ between the set $\cM_{X/U}$ and the set of all rational contractions 
$f \colon X \dashrightarrow Y$
with $Y$ projective over $U$,
given by
\begin{align}\label{eq:map_alpha}
\alpha \colon f \mapsto f^*(\Nef(Y/U)).
\end{align}
\end{theorem}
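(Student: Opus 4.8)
The plan is to prove both assertions together, letting the bijection $\alpha$ carry most of the weight. I would first record the easy facts. The support statement is immediate: by \pref{df:MDM}(4) the union of the maximal cones $g_i^*(\Nef(X_i/U))$ is exactly $\Mov(X/U)$, and $\cM_{X/U}$ consists of their faces, so its support is $\Mov(X/U)$; each cone is rational polyhedral because $\Nef(X_i/U)$ is (condition (3)) and $g_i^*$ is a linear isomorphism. Next I would check that $\alpha$ lands in $\cM_{X/U}$: given a rational contraction $f\colon X \dashrightarrow Y$, \pref{pr:factorize} writes $f = h\circ g_i$ with $h\colon X_i \to Y$ a morphism, so $f^*(\Nef(Y/U)) = g_i^*\big(h^*(\Nef(Y/U))\big)$, and $h^*(\Nef(Y/U))$ is a face of $\Nef(X_i/U)$ by (the proof of) \pref{rm:pullback_face}, which only uses that $X_i$ satisfies \pref{df:MDM}(1)--(3). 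Hence $\alpha(f)$ is a face of $g_i^*(\Nef(X_i/U))$, i.e. an element of $\cM_{X/U}$. Throughout I regard rational contractions up to isomorphism over $U$.

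For surjectivity of $\alpha$, I would take $\sigma \in \cM_{X/U}$, write $\sigma = g_i^*(\tau)$ for a face $\tau$ of $\Nef(X_i/U)$, and choose a rational point $D$ in $\relint \tau$. Since $\Nef(X_i/U)$ is generated by $\pi$-semiample classes, $D$ is $\pi$-semiample, giving a morphism $\Phi_D\colon X_i \to Y$ over $U$ with $D = \Phi_D^*A$ for a relatively ample $A$; as $\NE(\Phi_D) = \NE(X_i/U)\cap D^\perp$, one gets $\Phi_D^*(\Nef(Y/U)) = \tau$, whence $\sigma = \alpha(\Phi_D \circ g_i)$. For injectivity, if $\alpha(f_1)=\alpha(f_2)$ I would pick a rational point $D$ in the common relative interior; by \pref{cr:inj}, together with the fact that the relative interior of $\Nef(Y_k/U)$ is the relatively ample cone, $D = f_k^*(A_k)$ with $A_k$ ample, so \pref{pr:div_map}(1) gives $f_k = \varphi_D$ up to isomorphism over $U$ for $k=1,2$; hence $f_1 \cong f_2$.

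The remaining and only substantial point is the fan axiom that the intersection of two cones in $\cM_{X/U}$ is a common face; it suffices to treat two maximal cones $\sigma = g_i^*(\Nef(X_i/U))$ and $\tau = g_j^*(\Nef(X_j/U))$ and then pass to faces by the standard lattice property of faces of a polyhedral cone. I would take a rational point $D \in \relint(\sigma \cap \tau)$. Pulling $D$ back through $g_i$ gives a $\pi$-semiample class on $X_i$ and hence a contraction realizing $\varphi_D$, so the minimal face of $\sigma$ containing $D$ equals $(\varphi_D)^*(\Nef(Y/U))$; the same computation through $g_j$ shows the minimal face of $\tau$ containing $D$ equals $(\varphi_D)^*(\Nef(Y/U))$ as well, using that both constructions produce $\varphi_D$ because $g_i,g_j$ are isomorphisms in codimension one and therefore preserve section algebras over $U$. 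Call this common cone $F$; it is a face of both $\sigma$ and $\tau$ with $D \in \relint F$. A short convex-geometry argument finishes: $F \subseteq \sigma\cap\tau$ since $F$ is a face of each, and conversely for any $x \in \sigma \cap \tau$ the relative-interior property lets me write $D$ as a convex combination of $x$ and a point of $\sigma$, so the defining property of the face $F$ forces $x \in F$; thus $\sigma \cap \tau = F$.

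I expect the main obstacle to be exactly this last step: organizing the identification of the two minimal faces through the bijection $\alpha$ and verifying the convex-geometry claim $\sigma\cap\tau = F$. The supporting input that requires care is that each $X_i$, although only assumed to satisfy conditions (1)--(3) of \pref{df:MDM}, carries enough contraction theory for the face/contraction correspondence; this is fine because its nef cone is generated by $\pi$-semiample divisors, so every nef class is semiample and defines a morphism over $U$. Once these are in place, closure under faces and the reduction from general cones to maximal cones are routine, and the bijection $\alpha$ together with the fan axioms yields \pref{th:mov_cont}.
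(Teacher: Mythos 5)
Your proposal is correct and takes essentially the same route as the paper: both arguments rest on the observation that a rational point $D$ of the relevant cone comes from a semiample class on some SQM $X_i$, so that $\varphi_D^*(\Nef(Y_D/U))$ is a common face containing $D$ in its relative interior (via \pref{rm:pullback_face} and \pref{pr:factorize}), with \pref{pr:div_map} governing the correspondence between cones and contractions. The only differences are organizational: the paper checks that $\sigma\cap\tau$ is a face via the additive face property ($D_1+D_2\in\sigma\cap\tau$ with $D_1,D_2$ nef forces $D_1,D_2\in\sigma\cap\tau$) instead of your minimal-face/segment argument, and it packages your injectivity-plus-surjectivity proof as the construction of an explicit inverse $\beta(\sigma)=\varphi_L$ for $L\in\relint\sigma$.
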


\begin{proof}
We first prove that $\cM_X$ is a fan.
For this it is sufficient to show that 
$ g_{i} ^ {*} (\Nef(X_i/U)) \cap g_{j} ^ {*} (\Nef(X_j/U))$
is a common face for any 
$i , j =1, \dots r$ since this implies that $\sigma \cap \tau$ is a common face of  $\sigma$ and $\tau$
for any $\sigma, \tau \in \cM_X$ by elementary arguments of convex geometry.
We may assume that $X_i = X$ and $ g_{i} = \id_X$ without loss of generality.

Take any $[D] \in \Nef(X/U) \cap g_{j} ^ {*} (\Nef(X_j/U)) $.
By replacing with a multiple, we may assume that $D$ is an integral Cartier divisor and
there exists a Cartier divisor $D_j$ in $\Nef(X_j/U)$ such that
$ [D] = g_{j} ^ {*} [D_j]$.
Then we obtain the commutative diagram
\begin{align*}
\xymatrix{
    X \ar@{-->}[r]^{g_j} \ar[dr]_{\varphi_{D}} & X_j \ar[d]^{\varphi_{{D}_{j}}} \\
   & Y_D
  }
\end{align*}
such that $[D]$ and $[g_{j} ^ {*} (D_j)]$ are the pullbacks of some $\pi$-ample divisor on $Y_D$. 
This implies that $\varphi_ D ^ * ( \Nef(Y_D/U))$ is a common face of 
$\Nef(X/U)$ and $g_{j} ^ {*} (\Nef(X_j/U)) $
containing $D$ in its relative interior by \pref{rm:pullback_face}.

Let $D_1, D_2 \in \Nef(X/U)$ be $\bQ$-divisors such that $D_1 + D_2 \in \Nef(X/U) \cap g_{j} ^ {*} (\Nef(X_j/U))$.
Applying the above argument to $D \coloneqq D_1 + D_2 $, we obtain $D_i \in \Nef(X/U)$ ($i=1,2$),
to conclude that
$\Nef(X/U) \cap g_{j} ^ {*} (\Nef(X_j/U))$ is a face of $\Nef(X/U)$.
Indeed,
$D \in \varphi_ {D} ^ * ( \Nef(Y_D/U))$ implies that
\begin{align*}
D_1, D_2 \in \varphi_ {D} ^ * ( \Nef(Y_D/U)) \subset \Nef(X/U) \cap g_{j} ^ {*} (\Nef(X_j/U)),
\end{align*}
since $\varphi_ {D} ^ * ( \Nef(Y_D/U))$ is a face of $\Nef(X/U)$.
The same argument implies
that $\Nef(X/U) \cap g_{j} ^ {*} (\Nef(X_j/U))$ is also a face of $\Nef(X_j/U)$.
%

For the second part, we define the inverse map $\beta$ of $\alpha$ as follows.
For any cone $\sigma \in \cM_{X/U}$, take a line bundle $L$ whose class is contained in the relative interior of $\sigma$. Then we set $\beta (\sigma) \coloneqq  \varphi_L$.
It follows that $\beta$ is well-defined, and it is the inverse map of $\alpha$ by \pref{pr:div_map}
and
by the fact that the relative interiors of two different faces of a convex cone do not intersect each other.
\end{proof}

\begin{proposition}\label{pr:fan_structure}
Let $f \colon X \dashrightarrow Y$ be a rational contraction with $Y$ projective over $U$ (as in the following diagram) and $\sigma \coloneqq f^*(\Nef(Y/U))$.
\begin{align*}
 \xymatrix@ul{
   & {U}  & X\ar[l]_{\pi_1} \ar@{-->}[dl]^{f} \\
   & Y  \ar[u]^{\pi_2}
  }
\end{align*}
Then
 \begin{enumerate}
 \item
 $\dim \sigma = \rho(Y/U)$.
 
 \item
 $f \circ (g_i)^{-1}$ is regular if and only if 
 $\sigma \subset g_i^* \Nef(X_i/U)$.
 
 \item
 $f$ is birational if and only if
$ \sigma \nsubseteq  \partial (\Eff(X/U))$.
 \end{enumerate}
\end{proposition}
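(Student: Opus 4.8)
The plan is to handle the three assertions separately, in each case reducing the statement to a property of the injective pullback $f^{*}\colon \N^{1}(Y/U)_{\bQ}\to\N^{1}(X/U)_{\bQ}$ (injective by \pref{cr:inj}) together with a relatively ample divisor $A$ on $Y$. For (1) I would argue that, since $Y$ is projective over $U$, the cone $\Nef(Y/U)$ is the closure of the non-empty open relative ample cone and is therefore full-dimensional in $\N^{1}(Y/U)_{\bQ}$, so $\dim\Nef(Y/U)=\rho(Y/U)$. An injective linear map preserves the dimension of a cone, whence $\dim\sigma=\dim f^{*}(\Nef(Y/U))=\rho(Y/U)$.

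For (2) I set $h\coloneqq f\circ g_{i}^{-1}\colon X_{i}\dashrightarrow Y$, which is again a rational contraction because $g_{i}^{-1}$ is an SQM. From $f=h\circ g_{i}$ and \pref{rm:SQM}(4) I get $f^{*}=g_{i}^{*}\circ h^{*}$, so $\sigma=g_{i}^{*}\big(h^{*}\Nef(Y/U)\big)$; since $g_{i}^{*}$ is an isomorphism (\pref{rm:SQM}(1)), the inclusion $\sigma\subseteq g_{i}^{*}\Nef(X_{i}/U)$ is equivalent to $h^{*}\Nef(Y/U)\subseteq\Nef(X_{i}/U)$. If $h$ is regular this inclusion holds because the pullback of a relatively nef class under a morphism over $U$ is relatively nef. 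Conversely, if it holds then $h^{*}A$ is relatively nef on $X_{i}$; as $X_{i}$ satisfies (2) and (3), hence (2') by \pref{pr:MDM2}, the class $h^{*}A$ is semiample over $U$, and \pref{pr:div_map}(2) then forces $h$ to be regular.

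For (3) I fix a relatively ample Cartier divisor $A$ on $Y$. By \pref{pr:div_map} one has $f=\varphi_{f^{*}A}$ and $Y\cong\Proj_{U}(\R_{\pi}(X,f^{*}A))$, and by \pref{lm:pull_back} (with zero fixed part) $\pi_{*}\cO_{X}(mf^{*}A)\cong(\pi_{2})_{*}\cO_{Y}(mA)$ for every $m$, so $\R_{\pi}(X,f^{*}A)\cong\R_{\pi_{2}}(Y,A)$ as graded $\cO_{U}$-algebras; in particular the relative dimension of $Y$ over $U$ equals the relative Iitaka dimension of $f^{*}A$. Since $\sigma\subseteq\Eff(X/U)$ and $f^{*}$ is injective with $\relint\sigma=f^{*}(\Ample(Y/U))$, I would show that $\sigma\nsubseteq\partial\Eff(X/U)$ is equivalent to the single interior class $f^{*}A$ being big over $U$, which in turn is equivalent to $\dim Y=\dim X$ by the displayed isomorphism. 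It then remains to identify $\dim Y=\dim X$ with $f$ being birational: birationality clearly implies $f^{*}A$ big (birational pullback of a relatively big divisor is relatively big), and conversely, when $f^{*}A$ is big the relative Iitaka fibration attached to $f^{*}A$ is birational onto its image $Y$, so $f$ is birational.

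The step I expect to be the main obstacle is the last equivalence in (3): identifying the interior of $\Eff(X/U)$ with the relative big cone and, above all, proving that the relative Iitaka fibration of a big divisor (equivalently, a generically finite rational contraction) is birational onto its image. The convex-geometry bookkeeping—that $\sigma\nsubseteq\partial\Eff(X/U)$ is already detected by the interior class $f^{*}A$, and that bigness of $f^{*}A$ does not depend on the chosen relatively ample $A$—is routine once one knows $\relint\sigma=f^{*}(\Ample(Y/U))$, but the birationality statement carries the real content and is where I would concentrate the argument, likely via the relative version of the standard fact that $\varphi_{\lvert mL\rvert/U}$ is birational onto its image for $m\gg0$ when $L$ is big over $U$.
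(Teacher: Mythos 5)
Your proposal is correct and takes essentially the same route as the paper: (1) from the injectivity of $f^*$ in \pref{cr:inj} together with full-dimensionality of $\Nef(Y/U)$, (2) by the nef-implies-semiample criterion and \pref{pr:div_map}(2) exactly as in the proof of \pref{pr:factorize}, and (3) by identifying $\sigma \nsubseteq \partial(\Eff(X/U))$ with $\relint\sigma$ lying in the relative big cone and bigness of $f^*A$ with birationality of $f$. The paper's proof is a two-line citation of these same ingredients; your write-up simply fills in the details (the section-ring isomorphism via \pref{lm:pull_back} and the birationality of the Iitaka map of a big divisor) that the paper leaves implicit.
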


\begin{proof}
$(1)$ and $(2)$ follow from \pref{cr:inj} and the proof of \pref{pr:factorize}.
For $(3)$, note that $f$ is birational if and only if the relative interior of $f^* (\sigma)$
is contained in the big cone of $X$ over $U$.
\end{proof}

To prove \pref{pr:small_fan} below, we need the following lemma.

\begin{lemma}\label{lm:augmented_excep}
Let $f \colon X \to Y$ be a birational morphism between projective varieties over $U$. 
For any $\pi_2$-ample $\bQ$-divisor $D$ on Y, we obtain
\begin{align*}
\B_{+} (f^* (D) / U) \subset \operatorname{Exc} (f)
\end{align*}
\end{lemma}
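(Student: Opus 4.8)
The plan is to write $f^{*}(D)$, after subtracting a small ample divisor, as the sum of a $\pi_1$-ample divisor and an effective $f$-exceptional divisor, and then to read off the base locus directly from this decomposition (here $\pi_1 \coloneqq \pi_2 \circ f$). Two elementary observations drive the argument. First, for any effective $\bQ$-divisor $N$ one has $\B(P + N/U) \subseteq \B(P/U) \cup \Supp(N)$, since adding $N$ to an effective representative of $P$ over $U$ enlarges its support only by $\Supp(N)$. Second, a $\pi_1$-ample $\bQ$-divisor $P$ satisfies $\B(P/U) = \emptyset$, because a sufficiently divisible multiple is $\pi_1$-very ample, hence generated by global sections over $U$. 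Finally, recall that $\B_{+}(f^{*}(D)/U) = \B(f^{*}(D) - \epsilon A/U)$ for a fixed $\pi_1$-ample divisor $A$ and all sufficiently small rational $\epsilon > 0$, so it suffices to control the latter stable base locus.

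The heart of the proof, and the step I expect to be the main obstacle, is the construction of an effective $f$-exceptional $\bQ$-divisor $E$ with $-E$ relatively ample over $Y$. Here the standing hypothesis that $X$ is $\bQ$-factorial is essential: it forces $\Exc(f)$ to be purely of codimension one, with prime components $E_1, \dots, E_k$ whose classes span $\N^{1}(X/Y)_{\bQ}$ (a standard consequence of $\bQ$-factoriality). Choosing an $f$-ample Cartier divisor $H$ and expressing its relative numerical class over $Y$ as $\sum_i b_i E_i$, the negativity lemma (applied as in \pref{lm:rat_pull_back}; see \cite[Lemma 3.39]{MR1658959}) forces every $b_i \le 0$. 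Hence $E \coloneqq -\sum_i b_i E_i$ is effective and $f$-exceptional, and $-E$ is numerically equivalent to $H$ over $Y$, so $-E$ is $f$-ample.

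With such an $E$ in hand the rest is formal. Since $-E$ is $f$-ample and $D$ is $\pi_2$-ample, the standard criterion that an $f$-ample divisor plus a large multiple of the pullback of a $\pi_2$-ample divisor is $\pi_1$-ample shows that $f^{*}(D) - \delta E$ is $\pi_1$-ample for all sufficiently small $\delta > 0$. The $\pi_1$-ample cone being open, $f^{*}(D) - \delta E - \epsilon A$ remains $\pi_1$-ample for $\epsilon$ small, and therefore has empty stable base locus over $U$ by the second observation. Writing
\begin{align*}
f^{*}(D) - \epsilon A = \left( f^{*}(D) - \delta E - \epsilon A \right) + \delta E
\end{align*}
and applying the first observation gives $\B(f^{*}(D) - \epsilon A/U) \subseteq \Supp(E) \subseteq \Exc(f)$. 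Taking $\epsilon$ small enough to compute $\B_{+}$ then yields $\B_{+}(f^{*}(D)/U) \subseteq \Exc(f)$, as desired.
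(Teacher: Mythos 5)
There is a genuine gap, and it sits exactly at the step you flagged as the main obstacle. The claim that $\bQ$-factoriality of $X$ forces $\Exc(f)$ to be purely of codimension one, with component classes spanning $\N^1(X/Y)_{\bQ}$, is false: that statement is a consequence of $\bQ$-factoriality of the \emph{target} $Y$ (cf.\ \cite[Corollary 2.63]{MR1658959}), not of the source. When $f$ is a small contraction --- which is perfectly compatible with $X$ being $\bQ$-factorial (then it is $Y$ that fails to be $\bQ$-factorial) --- one has $\codim \Exc(f) \ge 2$, so there are no $f$-exceptional divisors at all. In that case the only candidate for your divisor is $E = 0$, and the trivial divisor is $f$-ample only when $f$ is finite, hence an isomorphism since $f$ is birational and $Y$ is normal. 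So the effective exceptional $E$ with $-E$ $f$-ample simply does not exist, and the decomposition $f^*(D) - \epsilon A = (f^*(D) - \delta E - \epsilon A) + \delta E$ has nothing to stand on.

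This failure is not a corner case for this paper: the lemma is invoked in \pref{pr:small_fan} precisely for birational morphisms with $\codim \Exc(f) \ge 2$, and that proposition feeds into \pref{pr:Flip_existence}, the existence of flips. So your argument breaks down exactly where the lemma is needed. The paper's own proof avoids any global construction on the exceptional locus: for each $x \notin \Exc(f)$ it picks a $\pi_1$-ample $H$ with $x \notin H$, forms the ideal $\cI = f_*(\cO_X(-H)) \subset \cO_Y$, and uses that $\cO_Y(kD') \otimes \cI$ is globally generated for $k \gg 0$ (where $D' = D + \pi_2^* A$ is ample) to produce a section of $k(f^*D + \pi_1^* A) - H$ not vanishing at $x$; this shows $x \notin \B_+(f^*(D)/U)$ with no assumption on the codimension of $\Exc(f)$. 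Your bookkeeping in the final paragraph (the inclusions $\B(P+N/U) \subseteq \B(P/U) \cup \Supp(N)$ and emptiness of the stable base locus of a relatively ample divisor) is fine, and your argument does prove the lemma in the divisorial case where $Y$ is $\bQ$-factorial, but it cannot be repaired to cover small contractions.
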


\begin{proof}
We mimic the argument in the proof of \cite[Proposition 2.3]{MR3101817}.

Take $x \notin \Exc(f)$ and a $\pi_1$-ample divisor $H$ on $X$ such that $x \notin H$
(we can take such $H$ because $X$ is quasi-projective).
Since $f$ is an isomorphism around $x$, 
we obtain 
$f(x) \nsubseteq \Z( \cI)$,
 where we set
 $\cI = f_* (\cO_X (-H)) \subset \cO_Y$.
Take an ample divisor $A$ on $U$ such that
$D' \coloneqq D + {\pi_2}^* A$ is ample on $Y$.
Then for $k \gg 0$,
$\cO_Y(kD') \otimes \cI$ is generated by global sections.
Hence there exists
\begin{align*}
\tilde{s} \in H ^ 0 (Y, \cO(kD') \otimes \cI)
\end{align*}
such that $\tilde{s} (f(x)) \neq 0$.
Let
$
{s} \in \operatorname{H} ^ 0 (X, k(f^*D + \pi_1 ^* A) -H)
$
be the section corresponding to $\tilde{s}$ under the isomorphism
\begin{align*}
H ^ 0 (X, k(f^*D + \pi_1 ^* A) -H) \simeq H ^ 0 (Y, \cO(kD') \otimes \cI),
\end{align*}
which is induced by the projection formula.

Then 
$s(x) \neq 0$ holds for $k \gg 0$, so that we obtain $x \notin \B_+(f^* D /U)$.
\end{proof}

\begin{proposition}\label{pr:small_fan}
Under the same assumptions as in \pref{pr:fan_structure},
if $f$ is birational and $\codim \Exc(f) \ge 2$,
then $ \sigma \nsubseteq  \partial (\Mov(X/U))$.
\end{proposition}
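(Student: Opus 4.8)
The plan is to reduce to the case of an honest morphism by \pref{pr:factorize}, and then to use the augmented base locus estimate \pref{lm:augmented_excep} to exhibit a class of $\sigma$ lying in the (full-dimensional) interior of $\Mov(X/U)$; since $\partial \Mov(X/U)$ is precisely the complement of this interior, producing one such class suffices.

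First I would apply \pref{pr:factorize} to choose the SQM $g_i \colon X \dashrightarrow X_i$ of \pref{df:MDM} with $\sigma \subset g_i^*(\Nef(X_i/U))$, so that $h \coloneqq f \circ g_i^{-1} \colon X_i \to Y$ is a morphism (\pref{pr:fan_structure}(2)). It is birational since $f$ and $g_i$ are. Because $g_i$ is an isomorphism in codimension one, no prime divisor of $X_i$ can be $h$-exceptional: such a divisor would correspond to a divisor on $X$ contracted by $f$, contradicting $\codim \Exc(f) \ge 2$. Hence $h$ is small, i.e.\ $\codim \Exc(h) \ge 2$. By \pref{rm:SQM} the isomorphism $g_i^* \colon \N^1(X_i/U)_{\bQ} \to \N^1(X/U)_{\bQ}$ carries $\Mov(X_i/U)$ onto $\Mov(X/U)$, hence $\partial \Mov(X_i/U)$ onto $\partial \Mov(X/U)$, and carries $\sigma_i \coloneqq h^*(\Nef(Y/U))$ onto $\sigma$. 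It therefore suffices to prove $\sigma_i \nsubseteq \partial \Mov(X_i/U)$.

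Next, I would fix a divisor $D$ on $Y$ that is ample over $U$. Applying \pref{lm:augmented_excep} to the small birational morphism $h$ gives $\B_+(h^*D/U) \subset \Exc(h)$, so $\codim \B_+(h^*D/U) \ge 2$. By the definition of the augmented base locus this says $\codim \B(h^*D - \epsilon H /U) \ge 2$ for some relatively ample $H$ on $X_i$ and small $\epsilon > 0$; then \pref{cr:base_movable} shows that $h^*D - \epsilon H$ is $\pi$-movable, i.e.\ $h^*D - \epsilon H \in \Mov(X_i/U)$.

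Finally I would upgrade this to interior membership. The cone $\Mov(X_i/U)$ is convex and closed under addition, and it contains the full-dimensional cone $\Nef(X_i/U)$; since $\epsilon H$ lies in the interior of $\Nef(X_i/U)$, the open set $(h^*D - \epsilon H) + \relint \Nef(X_i/U)$ is contained in $\Mov(X_i/U)$ and contains $h^*D = (h^*D - \epsilon H) + \epsilon H$. Thus $h^*D$ lies in the (relative $=$ topological) interior of $\Mov(X_i/U)$, while $h^*D \in \sigma_i$ because $D \in \Nef(Y/U)$. Hence $\sigma_i$ meets the interior of $\Mov(X_i/U)$, so $\sigma_i \nsubseteq \partial \Mov(X_i/U)$, and transporting by $g_i$ completes the proof. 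The main obstacle is exactly this last upgrade from ``movable'' to ``interior of the movable cone'': the reason \pref{lm:augmented_excep} is phrased with the augmented base locus $\B_+$ rather than with $\B$ is precisely to guarantee that a small ample perturbation of $h^*D$ stays movable, which is what pins $h^*D$ into the interior. A secondary point requiring care is the transfer of smallness across the SQM $g_i$, i.e.\ the identification of the $f$-exceptional divisors with the $h$-exceptional ones up to the codimension-two indeterminacy locus.
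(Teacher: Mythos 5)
Your proof is correct and follows essentially the same route as the paper: pass to the SQM $X_i$ so that $f$ becomes a (small) birational morphism, apply \pref{lm:augmented_excep} together with \pref{cr:base_movable} to show that a small ample perturbation of the pullback of a relatively ample divisor is movable, and then conclude interior membership by adding back the ample class. The only differences are cosmetic ones in your favor: you make explicit the transfer of the smallness hypothesis across the SQM and the convexity argument for interior membership, both of which the paper's proof leaves implicit.
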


\begin{proof}
After replacing $X$ with an SQM $X_i$, we may assume that $f$ is a birational morphism.
Take any $[L] \in \relint {\sigma}$. Then $L = f^*(D)$ holds for some $\pi_2$-ample $\bQ$-divisor $D$ on $Y$.
For a $\pi_1$-ample divisor $H$ and sufficiently small $\varepsilon \in \bQ_{>0}$,
we obtain
\begin{align*}
\B(L-\varepsilon H / U) \subset \Exc(f)
\end{align*}
by \pref{lm:augmented_excep}.
Then by the assumption and \pref{cr:base_movable}, we have $[L-\varepsilon H] \in \Mov(X/U)$.
This implies that $[L]$ is contained in the interior of $\Mov(X/U)$, since
$[L] = [L- \varepsilon H] + [\varepsilon H] $ and $[H]$ is contained in the interior of $\Mov(X/U)$.
\end{proof}

\section{MMP}
In this section, let $\pi_1 \colon X \to U$ be a Mori dream morphism unless otherwise stated.

\begin{definition}
Let $f \colon X \to Y$ be an algebraic fibre space over $U$.
\begin{enumerate}

\item 
$f$ is an \emph{elementary contraction} 
if $\rho(X/U) -\rho(Y/U) =1$.

\item $f$ is a \emph{divisorial contraction} if it is a birational morphism which contracts divisors.

\item $f$ is of \emph{fibre type} if $\dim (X) > \dim (Y)$.

\item Assume that $f$ is small and $D$ is a divisor on $X$ such that $-D$ is $f$-ample.
Then an SQM $\varphi \colon X \dashrightarrow X'$ is a $D$-\emph{flip} of $f$
if 
$f' \coloneqq f \circ \varphi^{-1}$ is a morphism
and 
$\varphi_* (D)$ on $X'$ is $f'$-ample.
If $f$ is elementary, we simply call it the flip of $f$.
\end{enumerate}
\end{definition}

\begin{remark}
It is well-known that any elementary contraction is either divisorial, fibre type, or small (for example, see \cite[Proposition 2.5]{MR1658959}).
Moreover, we remark the following facts.
\begin{enumerate}
\item
For an elementary divisorial contraction $f$, the exceptional locus $\Exc(f)$ is a prime divisor (for details, see \cite[Proposition 2.5]{MR1658959}).

\item
Assume that there exists a $D$-flip of $f$.
Then  $\R_{f} (X, D)$ is finitely generated and the $D$-flip $ X'$ is isomorphic to $\Proj_{Y} (\R_{f} (X, D)) $.
Furthermore, if $f$ is elementary, the flip of $f$ does not depend on the choice of the divisor $D$ (for details, see \cite[Corollary 6.4]{MR1658959}).

\end{enumerate}
\end{remark}


\begin{proposition}\label{pr:Flip_existence}
Let $f \colon X \to Y$ be a small elementary contraction over $U$.
Then there exists an SQM
$g_i \colon X \dashrightarrow X_i$ as in \pref{df:MDM} $(4)$ which is a flip of $f$.
\end{proposition}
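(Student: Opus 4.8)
The plan is to identify the flip of $f$ with the neighbouring maximal cone of the fan $\cM_{X/U}$ across the wall determined by $f$. First I would record the shape of $f$ in $\N^1(X/U)_{\bQ}$. Since $f$ is an elementary contraction, the extremal face $\NE(f)\subset \N_1(X/U)$ is a single ray $R$, and by \pref{rm:pullback_face} the cone $\sigma\coloneqq f^{*}(\Nef(Y/U))=\Nef(X/U)\cap R^{\perp}$ is a face of $\Nef(X/U)$. By \pref{pr:fan_structure} (1) it has dimension $\rho(Y/U)=\rho(X/U)-1$, so it is a facet, cut out of the full-dimensional cone $\Nef(X/U)$ by the supporting hyperplane $R^{\perp}$; note $\Nef(X/U)\subset\{\,D : D\cdot R\ge 0\,\}$ because $R$ is generated by a contracted curve. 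Because $f$ is small, \pref{pr:small_fan} gives $\sigma\nsubseteq\partial(\Mov(X/U))$, so the relative interior of $\sigma$ meets the interior of the support $\Mov(X/U)$ of the fan $\cM_{X/U}$.

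Next I would invoke the fan structure from \pref{th:mov_cont}. Being a facet of the maximal cone $\Nef(X/U)=\Nef(X_1/U)$ whose relative interior is interior to $|\cM_{X/U}|$, the cone $\sigma$ must be a facet of a second maximal cone of $\cM_{X/U}$, which I call $g_i^{*}(\Nef(X_i/U))$ for the corresponding SQM $g_i\colon X\dashrightarrow X_i$ from \pref{df:MDM} (4). Two distinct full-dimensional cones sharing the facet $\sigma$ lie on opposite sides of the hyperplane $R^{\perp}$, so in particular $g_i^{*}(\Nef(X_i/U))\subset\{\,D : D\cdot R\le 0\,\}$ and $g_i\neq\id_X$.

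Finally I would verify that $g_i$ is the flip of $f$. Since $\sigma\subset g_i^{*}(\Nef(X_i/U))$, \pref{pr:fan_structure} (2) shows that $h\coloneqq f\circ g_i^{-1}\colon X_i\to Y$ is a morphism, and it is again small and elementary. Choose an integral Cartier divisor $D$ whose class lies in the relative interior of $g_i^{*}(\Nef(X_i/U))$. Then $D\cdot R<0$, so $-D$ is $f$-ample (a divisor is $f$-ample exactly when it is positive on $R$), exhibiting $D$ as an admissible divisor for the flip. On the other hand, writing $D=g_i^{*}(E)$ with $E=(g_i)_{*}D$ nef on $X_i$, the choice of relative interior makes $E$ ample over $U$, hence ample relative to the intermediate morphism $h$; thus $(g_i)_{*}D$ is $h$-ample. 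By the definition of a flip this means $g_i$ is a $D$-flip of $f$, and since $f$ is elementary the flip is independent of $D$, so $g_i$ is the flip of $f$. The main point to get right is the sign reversal across the wall $R^{\perp}$: one and the same class is $f$-anti-ample when read on $X$ and $h$-ample once transported to $X_i$, which is precisely what the two chambers lying on opposite sides of $R^{\perp}$ encodes.
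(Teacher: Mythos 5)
Your proposal is correct and takes essentially the same approach as the paper's proof: both identify $\sigma = f^*(\Nef(Y/U))$ as a facet of $\Nef(X/U)$ not contained in $\partial(\Mov(X/U))$ via \pref{pr:fan_structure} and \pref{pr:small_fan}, pass to the adjacent maximal cone $g_i^*(\Nef(X_i/U))$ of the fan $\cM_{X/U}$ sharing this facet, and conclude by the sign reversal across the wall $\NE(f)^{\perp}$ that $g_i$ is the flip of $f$. Your additional verifications (Kleiman's criterion on the ray $R$, identifying the interior of $\Nef(X_i/U)$ with the relatively ample classes, and ampleness over $U$ implying ampleness over $Y$) merely spell out steps the paper leaves implicit.
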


\begin{proof}
Consider the cone $\sigma \coloneqq f^* (\Nef(Y/U))$, which is a facet of $\Nef(X/U)$ not contained in $\partial (\Mov(X/U))$ by \pref{pr:fan_structure} and \pref{pr:small_fan}.
Hence, by the fan structure in \pref{th:mov_cont}, there exists $i$ such that
$\sigma$ is the common facet of $\Nef(X/U)$ and $g_i^*(\Nef(X_i/U) )$.
For this $i$, the SQM $g_i$ is the flip of $f$.
To see this, let $f' \colon X_i \dashrightarrow Y$ be the rational contraction associated to 
$g_i^{-1} (f^* (\Nef(Y/U))) \subset \Nef(X_i/U)$.
It is small and commutes with $f$ via $g_i$.
Since $\sigma$ is defined by the hyperplane 
$\NE(f)^{\perp} = \NE({f'})^{\perp} = f^* (\N^1(Y/U)_{\bR})$
in
$ \Mov(X/U)  \ ( =\Mov(X_i /U) )$, if $D$ is a $f$-ample divisor on $X$,
then the inverse image of
${g_i}_* (D)$ in $X_i$ is $f'$-ample.
\end{proof}

\begin{proposition}\label{pr:divisorial}
Let $\pi_1 \colon X \to U $ be an MDM.
Consider an elementary divisorial contraction $f$ over $U$.
\begin{align*}
 \xymatrix@ul{
   & {U}  & X\ar[l]_{\pi_1} \ar[dl]^{f} \\
   & Y  \ar[u]^{\pi_2}
  }
\end{align*}
  Then $\pi_2$ is also an MDM.
\end{proposition}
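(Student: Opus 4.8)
The plan is to verify, one by one, the four conditions of \pref{df:MDM} for $\pi_2 \colon Y \to U$. Write $E \coloneqq \Exc(f)$, which is a prime divisor because $f$ is an elementary divisorial contraction, and recall from \pref{lm:known_inj} and \pref{cr:inj} that $f^* \colon \N^1(Y/U)_{\bQ} \to \N^1(X/U)_{\bQ}$ is injective with image the hyperplane $H \coloneqq \NE(f)^{\perp}$; let $C$ denote the class of a curve contracted by $f$, so that $E \cdot C \neq 0$ and $H = C^{\perp}$. Conditions $(1)$, $(2)$, $(3)$ are comparatively direct, and the real content is condition $(4)$.

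For $(1)$, the variety $Y$ is $\bQ$-factorial because it is the target of an elementary divisorial contraction from the $\bQ$-factorial variety $X$ with irreducible exceptional divisor $E$; this is standard (for a prime divisor $D$ on $Y$ take its strict transform $\tilde D$, which is $\bQ$-Cartier, choose $a$ with $(\tilde D + aE)\cdot C = 0$, and use that the relative Picard number of $f$ is one to write $\tilde D + aE \sim_{\bQ, U} f^* B$, whence $D = f_*(\tilde D + aE) = B$ is $\bQ$-Cartier). For $(2)$, if $D \equiv_U 0$ on $Y$ then $f^* D \equiv_U 0$ on $X$ by \pref{lm:rat_pull_back}, hence $f^* D \sim_{\bQ,U} 0$ since $X$ is an MDM, and then $D \sim_{\bQ, U} 0$ by the injectivity of $f^*$ on $\Pic(Y/U)_{\bQ}$ from \pref{lm:known_inj}. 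For $(3)$, by \pref{rm:pullback_face} the cone $f^* \Nef(Y/U) = \Nef(X/U) \cap H$ is a face of the rational polyhedral cone $\Nef(X/U)$, so $\Nef(Y/U)$ is rational polyhedral; its extremal generators are classes $A$ with $f^* A$ an extremal generator of $\Nef(X/U)$, hence $\pi_1$-semiample, and I would descend semiampleness by observing that $\R_{\pi_2}(Y, A) \simeq \R_{\pi_1}(X, f^* A)$ (since $f_* f^* = \id$ and $f_* \cO_X = \cO_Y$), so the semiample morphism defined by $f^* A$ contracts every $f$-contracted curve and therefore factors through $f$ by rigidity, exhibiting $A$ as a pullback of a $\pi_2$-ample class.

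For condition $(4)$ the plan is to compare $\Mov(Y/U)$ with $\Mov(X/U)$ through the projection $f_* \colon \N^1(X/U)_{\bQ} \to \N^1(Y/U)_{\bQ}$, which kills $[E]$ and satisfies $f_* f^* = \id$. First I would prove $\Mov(Y/U) = f_*(\Mov(X/U))$: for the inclusion $\subseteq$, given a $\pi_2$-movable $D$, \pref{lm:mov+fix} applied to $f^* D$ yields a decomposition $f^* D = M + tE$ with $M$ $\pi_1$-movable and $t \ge 0$ (the fixed part is supported on $E$ because $\B(f^*D/U) = f^{-1}(\B(D/U))$ and $f(E)$ has codimension $\ge 2$), and then $f_* M = D$; for $\supseteq$ one checks that the pushforward of a $\pi_1$-movable divisor is $\pi_2$-movable, the semiample model persisting over the locus where $f$ is an isomorphism. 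Next, to produce the SQMs of $Y$, I take $D$ in the interior of $\Mov(Y/U)$ (so $D$ is big), write $f^* D = M + tE$ as above, and set $Y' \coloneqq \Proj_U \R_{\pi_2}(Y, D) = \Proj_U \R_{\pi_1}(X, M)$, legitimate because $\R_{\pi_2}(Y,D) \simeq \R_{\pi_1}(X, M)$ by \pref{lm:mov+fix}. Since $X$ is an MDM, $M$ lies in some chamber $g_i^*\Nef(X_i/U)$ and, by \pref{th:mov_cont} and \pref{pr:factorize}, the rational contraction $\varphi_M$ factors as $X \dashrightarrow X_i \to Y'$ with $c \colon X_i \to Y'$ a morphism; because $\varphi_M = \varphi_D \circ f$ it contracts $E$, so $c$ contracts the strict transform $E_i$ of $E$. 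I would then argue that $c$ is an elementary divisorial contraction with $\Exc(c) = E_i$, whence $Y'$ is $\bQ$-factorial as in $(1)$, the induced map $Y \dashrightarrow Y'$ is a small $\bQ$-factorial modification, $Y'$ satisfies $(1)$–$(3)$ by the arguments above applied to $c$, and $D$ becomes $\pi$-ample on $Y'$; thus $D$ lies in the pullback of $\Nef(Y'/U)$. Finiteness of the collection of such $Y'$ follows from the finiteness of $\{X_i\}$ and of their nef faces, and these chambers cover the interior of $\Mov(Y/U)$, hence all of it by closedness.

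The step I expect to be the main obstacle is exactly the assertion that $c \colon X_i \to Y'$ contracts nothing beyond $E_i$ — equivalently, that for $D$ in the interior of $\Mov(Y/U)$ the ample model $\varphi_D \colon Y \dashrightarrow Y'$ contracts no divisor and is therefore small. The guiding principle is that divisorial (as opposed to small) ample models correspond to walls lying on $\partial\Mov(Y/U)$, so an interior class ought to avoid them; turning this into a proof, and in parallel controlling how the finitely many cones $f_*(g_i^*\Nef(X_i/U))$ fit together to cover the whole closed cone $\Mov(Y/U)$ without gaps, is where the delicate work lies. A secondary point needing care is the identity $\Mov(Y/U) = f_*\Mov(X/U)$ along the boundary, where the decomposition $f^* D = M + tE$ from \pref{lm:mov+fix} must be tracked to ensure $M$ remains $\pi_1$-movable.
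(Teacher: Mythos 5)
Your treatment of conditions $(1)$--$(3)$ is sound and essentially matches the paper's (the paper verifies $(2')$ and $(3')$ of \pref{pr:MDM2} rather than $(2)$ and $(3)$ directly, but the content is the same), and your lifting step --- decomposing $f^*D = M + tE$ with $M$ movable via \pref{lm:mov+fix} and \pref{cr:base_movable} --- is also how the paper begins its proof of condition $(4)$. The genuine gap is exactly the one you flag yourself: having placed $M$ in a full-dimensional chamber $g_i^*\Nef(X_i/U)$, you need the induced morphism $c \colon X_i \to Y'$ to be an elementary divisorial contraction with $\Exc(c) = E_i$. Nothing in your setup forces this. Since $M = f^*D - tE$ lies on $\partial\Mov(X/U)$ (adding any $\varepsilon E$ puts $E$ back into the fixed part), $M$ sits on a \emph{face} of the chamber, possibly of codimension greater than one, so $c$ can a priori have relative Picard number bigger than one, contract divisors other than $E_i$, or be small. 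Asserting that $c$ contracts nothing beyond $E_i$ is equivalent to asserting that $\varphi_D \colon Y \dashrightarrow Y'$ is an SQM of $Y$, i.e.\ precisely the kind of statement that condition $(4)$ for $Y$ is supposed to deliver; your ``guiding principle'' that interior classes avoid divisorial walls is the unproved claim itself, so the argument is circular at this point.

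The paper closes this gap with one additional idea that your proposal lacks: a more careful choice of cone, made upstairs on $X$ where the fan structure $\cM_{X/U}$ of \pref{th:mov_cont} is already available. Instead of the chamber containing $M$, it chooses $\tau \in \cM_{X/U}$ with $[M] \in \tau$, $\dim\tau = \rho(X/U)-1$, and --- crucially --- such that the hyperplane $H_\tau$ containing $\tau$ \emph{separates} $\Mov(X/U)$ from $[E]$; such a $\tau$ exists because $M \in \partial\Mov(X/U)$ and $M + \varepsilon E \notin \Mov(X/U)$ for all $\varepsilon > 0$. If $h \colon X_k \to Z$ is the contraction corresponding to $\tau$, the separation property gives $E_k.\NE(h) < 0$, which forces $h$ to be birational with $\Exc(h) \subseteq E_k$; and since $\tau \subset \partial\Mov(X/U)$, \pref{pr:small_fan} rules out $h$ being small, so $h$ is elementary divisorial with exceptional divisor exactly $E_k$, hence equal to one of the finitely many contractions $f_j \colon X_{i_j} \to Y_j$ of the strict transform of $E$. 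Then $[M] \in \tau = g_{i_j}^*(f_j^*\Nef(Y_j/U))$ pushes down to $[D] \in \varphi_j^*(\Nef(Y_j/U))$. These two consequences of the separating-hyperplane choice --- birationality with $\Exc \subseteq E_k$, and divisoriality via \pref{pr:small_fan} --- are exactly the two facts you could not establish. Note also that the paper's argument applies to arbitrary $D \in \Mov(Y/U)$, so no interior-plus-closure limiting step is needed.
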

\begin{proof}
Let $E$ be the exceptional divisor of $f$.
First we prove that $Y$ is $\bQ$-factorial.
Assume that $B$ is a prime Weil divisor on $Y$. 
Since we obtain $f^*(\N^1 (Y/U)_{\bQ}) = \NE (f) ^ {\perp}$ by \pref{rm:pullback_face},
there exists some $q \in \bQ$ such that 
$f^{-1}_*(B) + qE$ is contained in $\NE (f) ^ {\perp} = f^*(\N^1 (Y/U)_{\bQ})$.
Hence there exists a $\bQ$-Cartier divisor $D$ on $Y$ such that 
$m (f^{-1}_*(B) + qE)$ is the pull back of $D$ by $f$ for some integer $m>0$ since we assume that
$\pi_1$ is an MDM (in particular, $\Pic(X/U)_{\bQ} \simeq \N ^{1}(X/U)_{\bQ}$).
By comparing the support of $D$ and the support of $B$, we obtain that $B$ is $\bQ$-Cartier.

If $L \in \Nef(Y/U)$, the pull back $f^* L$ is also nef over $U$ and
this is $\pi_1$-semiample.
This implies that $L$ is $\pi_2$-semiample.
Since $f^*(\Nef(Y/U))$ is a face of $\Nef(X/U)$, it follows that $\Nef(Y/U)$ is a polyhedral cone.

We will show that $Y$ satisfies the condition $(4)$ in \pref{df:MDM}.
Let $g_i \colon X \dashrightarrow X_i$ (for $i=1, \dots, r$ ) be the SQMs of $X$ over $U$.
Consider all the elementary divisorial contractions of $X_i$ over $U$ (for $i=1, \dots, r$) such that each exceptional divisor is the strict transform of $E$.
We denote them as
\begin{align*}
f_j \colon X_{i_j} \to Y_j
\end{align*}
for $j=1,\dots, m$.
Note that each $Y_j$ satisfies the conditions $(0),(1),(2), and(3)$ in \pref{df:MDM}.
Let
$\varphi_j \coloneqq  f_j \circ g_{i_j}\circ f^{-1}$ 
be the composite of the birational maps.
Then we obtain the following diagram.
\begin{align*}
\xymatrix{
  X \ar@{-->}[r]^{g_{i_j}} \ar[d]_{f}  & X_{i_j} \ar[d] ^ {f_j} \\
  Y \ar@{-->}[r] ^{\varphi_j}   & Y_j
   }
\end{align*}
Since $ \varphi_j $ is an SQM of $Y$, it is sufficient to show that
\begin{align*}
\Mov(Y/U) = \bigcup_{j=1}^m (\varphi_j  ^* (\Nef(Y_j / U))).
\end{align*}

Take $D \in \Mov(Y/U)$. Then we can find $m \in \bQ_{\ge 0}$ such that
\begin{align*}
 M \coloneqq f^*D -mE \in \partial \Mov(X/U).
\end{align*}
Indeed, if 
$f^*(D) \in \Mov(X/U)$, we can see that $f^*(D) + \varepsilon E \notin \Mov(X/U)$ holds for any positive number $\varepsilon $, so that $f^*(D) \in \partial \Mov(X/U)$.
Then assume that $f^*(D) \notin \Mov(X/U)$.
Since $D$ is $\pi$-movable, there exists a positive integer $n$ such that
 $\codim (\Supp(\coker (\alpha_{nD}))) \ge 2$.
Since
$\alpha_{f^*(nD)}$ coincides with $f^* \alpha_{nD}$
and the equation
\begin{align*}
\Supp (\coker (f^* (\alpha_{nD/U}))) = f^{-1}(\Supp (\coker (\alpha_{nD/U} )))
\end{align*}
holds,
we obtain that
$\Supp(\coker (\alpha_{ f^*(nD)})) = E$ holds in codimension one.
As in the proof of \pref{lm:mov+fix}, there exists $a_0 > 0$ such that $f^* (nD) - a_0 E$ is $\pi$-movable, so that we can take $a>0$ such that $f^* D - a E \in \partial \Mov(X/U)$.


Take $\tau \in \cM_{X/U}$ such that
$[M] \in \tau$,
$\dim \tau = \rho(X/U) -1 $
and 
the hyperplane $H_{\tau}$ containing $\tau$ separates $\Mov(X/U)$ and $[E]$.
Let 
$\varphi_{\tau} \colon X \dashrightarrow Z$
be a rational contraction corresponding to $\tau$ via \pref{th:mov_cont}.
Then there exists a contracting morphism $h \colon X_k \to Z$ which commutes with
$\varphi_{\tau}$ via the SQM.
Let $E_k$ be the strict transform of $E$ on $X_k$.
By the choice of $\tau$, it follows that $E_k.\NE(h) < 0$.
Hence $h$ is birational and $\Exc(h) \subset E_k$.
Since $\tau \subset \partial (\Mov(X/U))$, the morphism $h$ is divisorial by
\pref{pr:small_fan}.
Therefore there exists some $j \in \{1, \dots ,m \}$
such that $h$ coincide with $f_j \colon X_{i_j} \to Y_j$, where $i_j= k$.
Moreover, there exists nef $\bQ$-divisor $M_j$ on $Y_j$ over $U$ such that
$M \equiv _U g_{i_j} ^* (h^* M_j) = g_{i_j} ^* (f_j ^* (M_j))$.
This implies $D \equiv _U \varphi_j ^* (M_j)$, so that we obtain
$[D] \in \varphi^* _j (\Nef(Y_j /U))$.
\end{proof}


\begin{lemma}
Let $\pi_1 \colon X \to U$ be an MDM and $D$ be a divisor on $X$ which is not $\pi_1$-nef.
Then there exists a $D$-negative elementary contraction
$f \colon X \to Y$
over $U$.
\end{lemma}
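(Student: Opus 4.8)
The plan is to exploit the duality between the rational polyhedral cone $\Nef(X/U)$ and the cone of curves, and then realize the contraction of a $D$-negative extremal ray using the semiampleness that is built into the definition of an MDM.

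First I would translate the hypothesis into convex geometry. Since $X$ is $\bQ$-factorial we have $[D] \in \N^1(X/U)_{\bQ}$, and "$D$ is not $\pi_1$-nef" means precisely $[D] \notin \Nef(X/U)$. By condition $(3)$ of \pref{df:MDM}, $\Nef(X/U)$ is a rational polyhedral cone, and it is full-dimensional because the relatively ample classes (which exist as $\pi_1$ is projective) lie in its interior. Consequently its dual cone $\overline{\NE}(X/U) \subset \N_1(X/U)_{\bQ}$, the closed cone of curves, is again rational polyhedral and is generated by finitely many extremal rays $R_1,\dots,R_k$. As $[D]\notin \Nef(X/U)=\overline{\NE}(X/U)^{\vee}$, there is a class $\gamma\in\overline{\NE}(X/U)$ with $D.\gamma<0$; expanding $\gamma$ as a nonnegative combination of the ray generators forces $D.R<0$ for at least one extremal ray $R$, which I fix from now on.

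Next I would produce the contraction attached to $R$. Let $F\coloneqq R^{\perp}\cap\Nef(X/U)$ be the facet of $\Nef(X/U)$ dual to $R$, a codimension-one face, and pick a $\bQ$-Cartier divisor $L$ whose class lies in the relative interior of $F$. Since $L$ is nef over $U$, condition $(3)$ of \pref{df:MDM} guarantees that $L$ is $\pi_1$-semiample, so by \pref{df:foundation}$(6)$ there is a morphism over $U$ pulling a relatively ample divisor back to $L$; after passing to its Stein factorization I obtain an algebraic fibre space $f\colon X\to Y$ over $U$ with $L=f^*A$ for some relatively ample divisor $A$ on $Y$. The cone of curves contracted by $f$ is then $\NE(f)=\{\gamma\in\overline{\NE}(X/U) : A.f_*\gamma=0\}=L^{\perp}\cap\overline{\NE}(X/U)$, and since $L$ was chosen in the relative interior of the facet $F$ dual to $R$, the biduality of polyhedral cones gives $L^{\perp}\cap\overline{\NE}(X/U)=R$. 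Thus $\NE(f)=R$.

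Finally I would verify elementarity and $D$-negativity. By \pref{rm:pullback_face}, $f^*\Nef(Y/U)=\Nef(X/U)\cap\NE(f)^{\perp}=\Nef(X/U)\cap R^{\perp}=F$, and by \pref{pr:fan_structure}$(1)$ one has $\dim f^*\Nef(Y/U)=\rho(Y/U)$. Since $F$ is a facet of the $\rho(X/U)$-dimensional cone $\Nef(X/U)$, we get $\dim F=\rho(X/U)-1$, whence $\rho(X/U)-\rho(Y/U)=1$ and $f$ is elementary. As every curve contracted by $f$ lies in $R$ and $D.R<0$, the morphism $f$ is $D$-negative, which is what we wanted. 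The step I expect to be the main obstacle is the second one: ensuring that the semiample divisor $L$ chosen in the relative interior of the facet yields a morphism whose contracted locus is exactly the single ray $R$, so that $f$ is genuinely elementary. This hinges on the choice of $L$ in the relative interior together with the face–ray duality of the polyhedral cones, and on the Stein factorization that upgrades the bare semiampleness morphism to an honest algebraic fibre space.
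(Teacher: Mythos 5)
Your proof is correct and takes essentially the same route as the paper: the paper chooses a facet $\sigma$ of the rational polyhedral cone $\Nef(X/U)$ whose supporting hyperplane separates $[D]$ from the cone and takes ``the morphism corresponding to $\sigma$,'' which is exactly your facet $F = R^{\perp}\cap\Nef(X/U)$ dual to the extremal ray $R$ with $D.R<0$. The details you supply (semiampleness of $L$, Stein factorization, the identification $\NE(f)=R$, and the dimension count via \pref{pr:fan_structure}) are precisely what the paper leaves implicit in that phrase, so the two arguments differ only in working on the dual (curve) side rather than directly with the facet.
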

\begin{proof}
Take a facet $\sigma \prec \Nef(X/U) \subset \N^1(X/U)_{\bQ}$ such that
$[D]$ lies in the other side of $\Nef(X/U)$ with respect to the hyperplane $H_{\sigma}$ containing $\sigma$.
Let $f \colon X \to Y $ be the morphism corresponding to $\sigma$.
Then $f$ is elementary over $U$ and $(D. \NE (f)) < 0 $ since
$H_{\sigma} = \NE (f) ^{\perp}$.
\end{proof}

\begin{theorem}\label{th:MMP}
Let $\pi \colon X \to U$ be an MDM and $D$ be a divisor on $X$.
Then there exists an MMP for $D$ over $U$ and
it terminates either in a Mori fibre space or 
a good minimal model (i.e., a model on which the strict transform of the divisor $D$ is semiample over $U$).\end{theorem}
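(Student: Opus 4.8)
The plan is to run the $D$-MMP explicitly, using the fan structure on $\Mov(X/U)$ established in \pref{th:mov_cont} together with the existence of flips (\pref{pr:Flip_existence}) and the preservation of the MDM property under divisorial contractions (\pref{pr:divisorial}) to guarantee both that each step exists and that termination occurs after finitely many steps. The key observation is that an MDM has finitely many SQMs, so only finitely many models can ever appear, which is what forces termination without any discrepancy or length-of-extremal-ray argument.

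First I would reduce to the case where $D$ is $\pi$-effective, i.e. $[D] \in \Eff(X/U)$. If $D$ is not pseudo-effective, then $D$ is not $\pi$-nef, so by the lemma immediately preceding the theorem there is a $D$-negative elementary contraction $f \colon X \to Y$; since $D$ lies strictly outside $\Eff(X/U)$ the contraction must be of fibre type (it cannot be birational, as a birational contraction or flip would keep $[D]$ on the effective side by \pref{rm:SQM} and \pref{pr:fan_structure}(3)), and this terminates the MMP in a Mori fibre space. So assume $[D] \in \Eff(X/U)$, and by \pref{lm:mov+fix} write $D = M + F$ with $M$ $\pi$-movable and $F$ the $\pi$-fixed part, noting $\pi_* \cO_X(mD) \simeq \pi_* \cO_X(mM)$ so that semiampleness of the strict transform of $D$ is governed by $M$.

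Next I would run the MMP by a case analysis driven by the position of $[D]$ relative to $\Nef(X/U)$. If $[D] \in \Nef(X/U)$, then by condition $(3)$ of \pref{df:MDM} $D$ is already $\pi$-semiample and we are at a good minimal model. Otherwise $D$ is not $\pi$-nef, and the preceding lemma produces a $D$-negative elementary contraction $f \colon X \to Y$ across a facet $\sigma$ of $\Nef(X/U)$. I would then split into the three types. If $f$ is of fibre type we stop at a Mori fibre space. If $f$ is divisorial, then \pref{pr:divisorial} shows $Y$ is again an MDM, and I replace $X$ by $Y$ and $D$ by $f_*(D)$, having dropped the Picard number by one. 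If $f$ is small, then \pref{pr:Flip_existence} produces the flip $g_i \colon X \dashrightarrow X_i$ among the finitely many SQMs of \pref{df:MDM}(4); I replace $X$ by $X_i$ and $D$ by its strict transform $g_{i*}(D)$, which is now $f'$-ample on the other side of $\sigma$.

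The main obstacle, and the crux of the argument, is \textbf{termination}. For this I would use the finiteness built into the MDM structure: divisorial contractions strictly decrease $\rho(X/U)$, which is bounded below by one, so only finitely many divisorial steps can occur; and between consecutive divisorial steps, each sequence of flips moves between distinct models among the finite set $\{X_1, \dots, X_r\}$ of SQMs from \pref{df:MDM}(4). To see a flip sequence cannot cycle, I would track the class of the strict transform of $D$: each flip carries $[D]$ (equivalently $[M]$) strictly across a wall of the chamber decomposition of $\Mov(X/U)$ given by the fan $\cM_{X/U}$, moving it from one maximal cone $g_i^*(\Nef(X_i/U))$ into a neighboring one in the direction decreasing the relevant section-algebra invariant, so no chamber is revisited. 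Since there are finitely many chambers, the flip sequence terminates, necessarily at a model where the strict transform of $D$ lies in the nef cone and is therefore $\pi$-semiample by \pref{df:MDM}(3)---a good minimal model. I expect the careful bookkeeping that a flip never returns to a previously visited chamber (as opposed to merely existing) to be the step requiring the most attention, and I would lean on \pref{pr:div_map} and \pref{pr:factorize} to identify the model reached after the flip with the correct rational contraction associated to the new chamber.
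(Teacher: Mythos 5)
Your overall skeleton matches the paper's proof: induction driven by the position of $[D]$ relative to $\Nef(X/U)$, with nef classes handled by \pref{df:MDM}(3), divisorial contractions by \pref{pr:divisorial} (dropping $\rho(X/U)$), and small contractions by \pref{pr:Flip_existence}. The genuine gap is exactly at the step you yourself flag as ``requiring the most attention'': termination of flip sequences. Your picture --- that each flip carries $[D]$ across a wall of $\cM_{X/U}$ into a neighboring maximal cone, ``in the direction decreasing the relevant section-algebra invariant, so no chamber is revisited'' --- is not a proof, and it mis-describes the geometry: under an SQM the numerical class of $D$ is fixed via the identification $g_i^*$ of \pref{rm:SQM}; what changes is the model (equivalently, which cone of $\cM_{X/U}$ plays the role of the nef cone). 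Finiteness of chambers alone does not rule out a cycle of flips returning to a previously visited model; one needs a strictly monotone quantity, and you never produce one. The paper supplies precisely this: choosing a divisor $E$ over $X$ whose center on $Y$ lies in the non-isomorphism locus of the flipping contraction, the coefficient of $E$ in the pullback of the strict transforms satisfies $\ord_E(D_i) \le \ord_E(D_{i+1})$ for all $i$ and $\ord_E(D) < \ord_E(D_1)$, as in \cite[Lemma 3.38]{MR1658959}; hence no composite $\psi_0 \circ \cdots \circ \psi_{s-1}$ of flips is an isomorphism, and since $X$ admits only finitely many SQMs over $U$ (the corollary following \pref{pr:factorize}), the sequence must terminate. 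Without this discrepancy-type argument (or some substitute such as MMP with scaling), your argument does not close.

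There is also a secondary error: your reduction of the non-pseudo-effective case claims that if $[D] \notin \Eff(X/U)$ then any $D$-negative elementary contraction must be of fibre type. This is false: over $U = \Spec \bC$, let $X$ be the blow-up of $\bP^2$ at a point (an MDS) and $D = -A$ with $A$ ample; then every extremal ray is $D$-negative, in particular the ray contracting the exceptional curve, which yields a divisorial $D$-negative contraction. The paper needs no such reduction --- its induction on $\rho(X/U)$ treats all divisors uniformly, and for non-pseudo-effective $D$ the process simply ends later in a fibre-type contraction (it cannot end in a good minimal model, since a $\pi$-semiample strict transform would force $[D] \in \Eff(X/U)$).
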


\begin{proof}
We use an induction on $\rho(X/U)$.
If $\rho(X/U) = 1$, then either $D$ is $\pi$-nef or $-D$ is $\pi$-ample.
When $D$ is $\pi$-nef, it is $\pi$-semiample by the definition of MDM.
If $-D$ is $\pi$-ample, then $D$-negative contraction $f$ contracts all the curves that contracts by $\pi_1$. 
This implies that $f$ coincides $\pi_1$.
Hence the divisor $[f_*(D)] \equiv_{U,\bQ} 0$ is nef and semiample over $U$ if $\pi$ is a divisorial contraction.
If $\pi$ is a small contraction, we have a flip such that the strict transform of $D$ is nef over $U$ by \pref{pr:Flip_existence}. 
Finally, it is a fibre type contraction if $\dim X > \dim U$.

Now assume that $\rho(X/U) \ge 2$ holds and that $D$ is not nef over $U$.
Let $f \colon X \to Y$ be a $D$-negative contraction.
If $f$ is a fibre type, it is done.
If $f$ is a divisorial contraction, the assumption of the induction and \pref{pr:divisorial} implies that MMP runs.
Assume that $f$ is small.
Then, by \pref{pr:Flip_existence}, 
we only have to show that
there does not exists an infinite sequence of $D$-negative flips.
Consider a sequence of $D$-negative flips;
\begin{align*}
\xymatrix{
  X = X_0 \ar@{-->}[r]^{\psi_0} & X_1 \ar@{-->}[r]^{\psi_1} & 
  \cdots \ar@{-->}[r]^{\psi_{s-2}}  &  X_{s-1} \ar@{-->}[r]^{\psi_{s-1}} & X_s .
  }
\end{align*}
Since we know that there exist finitely many SQMs of $X$ over $U$,
it is sufficient to show that $\psi_0 \circ \cdots  \circ \psi_{s-1}$ are not isomorphisms.
We denote the strict transform of $D$ in $X_i$ by $D_i$.
If we take a divisor $E$ over $X$ such that $f$ is not isomorphic above the generic point of $\operatorname{center}_Y (E)$, it follows that
$\ord_E (D_i) \le \ord_E (D_{i+1})$
and
$\ord_E (D) < \ord_E (D_1)$ as in the proof of
\cite[Lemma 3.38]{MR1658959},
where 
$\ord_E (D_i)$ denotes the coefficient of $E$ in the divisor $\mu^* (D_i)$ for a resolution $\mu \colon \tilde{X_i} \to X_i$ such that $E$ is a divisor on $\tilde{X_i}$.
This shows that $\ord_E(D) < \ord_E(D')$, and hence $\psi_0 \circ \cdots  \circ \psi_{s-1}$ is not an isomorphism.
\end{proof}

\begin{corollary}\label{cr:section_ring_f_g}
For any divisor $D$ on $X$,
the $\cO_U$-algebra $R_{\pi}(X, D)$ is finitely generated.
\end{corollary}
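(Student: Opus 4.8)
The plan is to deduce the finite generation of $\R_\pi(X,D)$ from the machinery already assembled, running the $D$-MMP from \pref{th:MMP} and comparing section algebras along the way. First I would dispose of the easy case: if $D$ is $\pi$-semiample, then $\R_\pi(X,D)$ is finitely generated essentially by definition (choosing $m$ with $\alpha_{mD}$ surjective gives a morphism $\varphi\colon X\to Y$ over $U$ with $mD=\varphi^*A$ for a relatively ample $A$, and $\R_\pi(X,D)$ is, up to taking a Veronese, $\bigoplus_m \pi_*\varphi^*\cO(mA)=\bigoplus_m {\pi'}_*\cO(mA)$, which is finitely generated because $A$ is ample over $U$ and $U$ is affine-locally Noetherian). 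Thus the content is reducing the general case to the semiample one.

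The main step is to show that running the MMP for $D$ does not change the section algebra up to finite generation. The MMP from \pref{th:MMP} produces a sequence of divisorial contractions and flips (SQMs), each of which is a birational contraction over $U$, terminating either in a Mori fibre space or in a good minimal model $X'$ on which the strict transform $D'$ of $D$ is $\pi'$-semiample. I would verify that at each elementary step $X\dashrightarrow X_i$ the relevant section algebras agree: for a flip (SQM) the isomorphism in codimension one gives $\pi_*\cO_X(mD)\simeq (\pi_i)_*\cO_{X_i}(mD_i)$ for all $m$, hence $\R_\pi(X,D)\simeq \R_{\pi_i}(X_i,D_i)$ as $\cO_U$-algebras; for a divisorial contraction one invokes \pref{lm:pull_back} together with the fact that the contracted divisor becomes part of the $f$-fixed part, so that the pushed-forward algebra is again preserved (more precisely $\pi_*\cO_X(mD)\simeq (\pi_i)_*\cO_{X_i}(mD_i)$ because the fixed divisor contributes no new sections, as in \pref{lm:mov+fix}). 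In the Mori fibre space case $D$ is pushed to a class that is trivial or semiample on the base of the fibration relative to $U$, which again yields finite generation. The cleanest formulation is: the birational contraction $X\dashrightarrow X'$ to a good minimal model satisfies $\R_\pi(X,D)\simeq \R_{\pi'}(X',D')$, and since $D'$ is $\pi'$-semiample, $\R_{\pi'}(X',D')$ is finitely generated by the first step.

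I would therefore structure the proof as: (i) reduce to $D$ effective (if $D$ is not $\pi$-effective then $\R_\pi(X,D)=\cO_U$ in degree $0$ and finite generation is trivial); (ii) run the $D$-MMP over $U$ using \pref{th:MMP}, tracking the strict transforms $D_i$; (iii) show at each step that the section algebra is preserved, using \pref{rm:SQM} and \pref{lm:known_inj} for flips and \pref{lm:pull_back}/\pref{lm:mov+fix} for divisorial contractions; and (iv) conclude on the good minimal model, where $\pi$-semiampleness of the strict transform gives finite generation directly. Since the MMP terminates after finitely many steps (again \pref{th:MMP}), the chain of isomorphisms is finite and transmits finite generation back to $\R_\pi(X,D)$.

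The hard part will be step (iii), verifying precisely that the section algebra is unchanged under each elementary step, and in particular handling the Mori fibre space branch, where the ``minimal model'' interpretation is less direct; there I expect to argue that the fibre type contraction $X\to Y$ realizes $D$ as (numerically) the pullback of a class on $Y$ plus fixed part, so that $\R_\pi(X,D)$ coincides with a section algebra on $Y\to U$ of one lower Picard rank, allowing an induction on $\rho(X/U)$ parallel to the proof of \pref{th:MMP}. The bookkeeping of fixed divisors under divisorial contractions, ensuring they never contribute sections, is the most delicate point and is exactly where \pref{lm:mov+fix} and \pref{lm:pull_back} do the essential work.
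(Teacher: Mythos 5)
Your proposal is correct and follows essentially the same route as the paper: reduce to $[D]\in\Eff(X/U)$, run the $D$-MMP of \pref{th:MMP}, transfer the section algebra across the resulting birational contraction using \pref{lm:pull_back} (with fixed divisors handled as in \pref{lm:mov+fix}), and conclude from semiampleness on the good minimal model; the only cosmetic difference is that the paper applies \pref{lm:pull_back} once to the composite contraction $f\colon X\dashrightarrow Y$ rather than tracking each elementary step. The one point you flag as hard --- the Mori fibre space branch --- is in fact vacuous: once $D$ is effective, no strict transform $D_i$ can be negative on a fibre-type contraction (an effective divisor cannot be anti-ample on a general fibre), which is exactly why the paper's proof passes directly to a good minimal model.
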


\begin{proof}
We may assume that $[D] \in \Eff(X/U)$.
Since $[D] \in \Eff(X/U)$, $\pref{th:MMP}$ implies that there exists a birational contraction
$f \colon X \dashrightarrow Y$ 
over $U$
and
semiample divisor $D_Y \coloneqq f_*(D)$ on $Y$.
Then, by \pref{lm:pull_back}, we obtain
$\R (X/U, D) \simeq \R(Y/U,D_Y)$
since $D \simeq f^* (D_Y) + E$, where $E$ is an $f$-fixed divisor.
\end{proof}
%
%

\begin{corollary}\label{cr:mori_chamber}
Let $\pi \colon X \to U$ be an MDM.
Then
there are finitely many birational contractions
$g_i \colon X \dashrightarrow Y_i$ over $U$ such that each $\pi_i \colon Y_i \to U$ ($i=1, \dots ,k$) is also an MDM over $U$, and there is a chamber decomposition of $\NE^1 (X/U)$
$:$
\begin{align}\label{eq:conecone}
\NE^1 (X/U) = \bigcup_{i=1} ^ {r} (g_i ^ *(\Nef (Y_i / U)) \times \Exc (g_i))
\end{align}
such that the interiors of the chambers do not intersect each other.
Moreover, the cones $g_i ^ *(\Nef (Y_i / U)) \times \Exc (g_i)$ in $ \NE^1 (X/U)$ are precisely Mori chambers.
\end{corollary}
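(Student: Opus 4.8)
The plan is to index the top-dimensional chambers by the \emph{birational} rational contractions of $X$ over $U$, and to realize each one as the cone $g_i^*(\Nef(Y_i/U)) \times \Exc(g_i)$, where $\Exc(g_i)$ denotes the cone generated by the classes of the $g_i$-exceptional prime divisors. The three earlier results \pref{cr:section_ring_f_g}, \pref{pr:div_map}, and \pref{th:mov_cont} supply almost everything: finite generation attaches to each effective class a rational contraction, \pref{pr:div_map} identifies that contraction precisely, and \pref{th:mov_cont} guarantees there are only finitely many contractions (hence finitely many birational ones) altogether. First I would record the elementary fact that for a birational contraction $g \colon X \dashrightarrow Y$ between $\bQ$-factorial varieties over $U$ there is a direct sum decomposition $\N^1(X/U)_{\bQ} = g^*(\N^1(Y/U)_{\bQ}) \oplus V_g$, where $V_g$ is spanned by the classes of the $g$-exceptional prime divisors and $g^*$ is injective by \pref{cr:inj}. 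In particular $\rho(X/U) = \rho(Y/U) + \#\{\,g\text{-exceptional prime divisors}\,\}$, so each $C_g \coloneqq g^*(\Nef(Y/U)) \times \Exc(g)$ is full-dimensional in $\N^1(X/U)_{\bQ}$.

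For the covering of $\Eff(X/U)$ (the cone $\NE^1(X/U)$ of the statement), take $[D]$ in the relative big cone. By \pref{cr:section_ring_f_g} the algebra $\R_{\pi}(X,D)$ is finitely generated, so $\varphi_D \colon X \dashrightarrow Y$ is a rational contraction by \pref{pr:div_map}; since $D$ is big over $U$ the relative $\Proj$ has the same dimension over $U$ as $X$, so $\varphi_D$ is birational. The first assertion of \pref{pr:div_map} writes $D \sim_{\bQ,U} \varphi_D^*(A) + E$ with $A$ relatively ample on $Y$ and $E$ a $\varphi_D$-fixed divisor; as $E$ is $\varphi_D$-fixed its support lies in the $\varphi_D$-exceptional locus, so $[E] \in \Exc(\varphi_D)$ and thus $[D] \in C_{\varphi_D}$. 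Because the big classes are dense in $\Eff(X/U)$ and there are only finitely many closed cones $C_{g_i}$, the union $\bigcup_i C_{g_i}$ is closed and contains the big cone, hence all of $\Eff(X/U)$; the reverse inclusion is clear.

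For disjointness of interiors I would use that every interior point $[D]$ of $C_g$ is big and satisfies $\varphi_D = g$: writing $[D] = g^*(A) + F$ with $A$ relatively ample and $F$ a $g$-fixed divisor, this is exactly the content of \pref{pr:div_map}(1). Since distinct $g$ give non-isomorphic $\varphi_D$, the relative interiors are disjoint, and the same identity shows that $C_g$ is the closure of the Mori equivalence class of any of its big interior points; hence the $C_{g_i}$ are precisely the Mori chambers. Finally, to see that each $Y_i$ is an MDM I would factor $g_i$ as an SQM $X \dashrightarrow X_j$ (one of the models of \pref{df:MDM}(4), which is an MDM) followed by the contracting morphism $h \colon X_j \to Y_i$ furnished by \pref{pr:factorize}, and then choose a maximal chain of faces of $\Nef(X_j/U)$ descending from the full cone to $h^*(\Nef(Y_i/U))$; this expresses $h$ as a composition of elementary divisorial contractions between $\bQ$-factorial MDMs, so \pref{pr:divisorial} applied repeatedly gives that $Y_i$ is an MDM.

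The main obstacle I expect is the last paragraph together with the exceptional-cone bookkeeping. One must verify carefully that a $\varphi_D$-fixed divisor is genuinely supported on the exceptional locus, so that its class lies in $\Exc(g)$, and that the contracting morphism $h$ really resolves into a chain of elementary \emph{divisorial} contractions passing only through $\bQ$-factorial MDMs. Conceptually, this is the step where the passage from the fan $\cM_{X/U}$ on $\Mov(X/U)$ to the full chamber decomposition of $\Eff(X/U)$ takes place: the movable-cone fan must be thickened by exactly the complementary cones $\Exc(g_i)$, and confirming that this thickening stays compatible with \pref{pr:divisorial}, so that $\bQ$-factoriality and the defining conditions of an MDM survive each divisorial step, is the delicate point.
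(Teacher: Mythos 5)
Your covering argument and the identification of the full-dimensional chambers with Mori chambers follow a workable route that differs mildly from the paper's (the paper runs the $D$-MMP of \pref{th:MMP} for \emph{every} effective class, so that the membership $[D]\in g^*(\Nef(X'/U))\times\Exc(g)$, the finiteness, and the MDM property of the targets all come out of \pref{th:MMP}, \pref{pr:divisorial}, \pref{pr:Flip_existence} and \pref{th:mov_cont} at once). The genuine gap is in your last paragraph, where each $Y_i$ must be shown to be an MDM, and it contaminates your first paragraph as well. The direct sum decomposition $\N^1(X/U)_{\bQ}=g^*(\N^1(Y/U)_{\bQ})\oplus V_g$, hence full-dimensionality of $C_g$, is valid only when $Y$ is $\bQ$-factorial, and $\bQ$-factoriality of $Y=\Proj_U(\R_{\pi}(X,D))$ is precisely part of what has to be proved. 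It genuinely fails for some of the maps you index by: take a small elementary contraction $f\colon X\to Y$ over $U$ (the situation of \pref{pr:Flip_existence}; such $f$ exist as soon as $X$ admits a nontrivial SQM over $U$) and $D=f^*A$ with $A$ relatively ample on $Y$. Then $D$ is big over $U$, and $\varphi_D=f$ is a birational rational contraction, but $Y$ is not $\bQ$-factorial, $C_{\varphi_D}=f^*(\Nef(Y/U))$ is not full-dimensional, and $Y$ is not an MDM. So ``birational rational contractions'' is the wrong index set, and deciding which birational contractions actually carry chambers is exactly the missing content.

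Your factorization argument does not close this gap. A maximal chain of faces of $\Nef(X_j/U)$ descending to $h^*(\Nef(Y_i/U))$ does yield a chain of elementary contractions $X_j\to Z_1\to\cdots\to Z_k=Y_i$, but nothing forces each step to be divisorial: a step may be small, in which case $Z_{l+1}$ fails to be $\bQ$-factorial, \pref{pr:divisorial} no longer applies, and the ``one prime divisor per step'' bookkeeping collapses from that point on. The only way to rule out small steps is to know in advance that the number of $h$-exceptional prime divisors equals $\rho(X_j/U)-\rho(Y_i/U)$, i.e.\ the very full-dimensionality/$\bQ$-factoriality you are trying to establish; as written the argument is circular. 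The paper dissolves the difficulty by construction rather than a posteriori: running the $D$-MMP produces a factorization of $g_i$ consisting \emph{only} of elementary divisorial contractions, each of which preserves the MDM property by \pref{pr:divisorial}, and of flips, which are among the SQMs of \pref{df:MDM} by \pref{pr:Flip_existence}; hence $Y_i$ is automatically a $\bQ$-factorial MDM and $[D]\in g_i^*(\Nef(Y_i/U))\times\Exc(g_i)$ holds by the definition of the MMP. To repair your proof, replace the last paragraph (and the detour through $\varphi_D$ for arbitrary big $D$) with this MMP argument; alternatively, restrict your covering to classes in the interiors of Mori chambers, prove full-dimensionality of $C_{\varphi_D}$ there first, and deduce $\bQ$-factoriality of $Y_i$ by the support-comparison argument used in \pref{pr:divisorial} and in the proof of \pref{th:geom_quot_mdm} before attempting any factorization.
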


\begin{proof}
For a divisor $D \in \NE^1 (X/U)$, we obtain a birational contraction
$g \colon X \dashrightarrow X'$ over $U$ and a divisor $D'$ on $X'$ that is nef over $U$ by running $D$-MMP.
Note that $X'$ is also an MDM by \pref{pr:divisorial} and \pref{pr:Flip_existence}.
By \pref{th:mov_cont}, there exists finitely many rational contraction over $U$.
Hence we obtain the finite collection $\{ g_i \}$ of birational contractions of $X$ satisfying \pref{eq:conecone}.

The disjointness and the second assertion follow from \pref{pr:div_map} (1).
\end{proof}

\begin{corollary}
$\NE^1 (X/U)$ is a rational polyhedral cone.
\end{corollary}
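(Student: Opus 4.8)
The plan is to deduce this immediately from the Mori chamber decomposition established in \pref{cr:mori_chamber}. That result expresses $\NE^1(X/U)$ as a finite union of chambers
\[
\NE^1(X/U) = \bigcup_{i=1}^{r} \sigma_i, \qquad \sigma_i \coloneqq g_i^*(\Nef(Y_i/U)) \times \Exc(g_i),
\]
so the whole cone is already covered by finitely many pieces; it remains only to verify that each piece is rational polyhedral and then to organize the pieces into a single finitely generated cone by invoking convexity.

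First I would check that each chamber $\sigma_i$ is rational polyhedral. Since $\pi_i \colon Y_i \to U$ is again an MDM, condition $(3)$ of \pref{df:MDM} guarantees that $\Nef(Y_i/U)$ is a rational polyhedral cone, and its image under the linear map $g_i^*$ is therefore also rational polyhedral. The exceptional cone $\Exc(g_i)$ is generated by the finitely many $g_i$-exceptional prime divisors, hence is rational polyhedral as well. Consequently $\sigma_i$, being spanned by the finitely many extremal generators of $g_i^*(\Nef(Y_i/U))$ together with the classes of the $g_i$-exceptional divisors, is rational polyhedral.

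The main point is then to pass from a finite union of rational polyhedral cones to a single one. Let $S$ be the finite set consisting of one rational generator for each extremal ray of each $\sigma_i$, and let $C$ be the convex cone generated by $S$; by construction $C$ is finitely generated, hence rational polyhedral. On one hand each $\sigma_i$ is contained in $C$, so $\NE^1(X/U) = \bigcup_i \sigma_i \subseteq C$. On the other hand every element of $S$ lies in some $\sigma_i \subseteq \NE^1(X/U)$, and $\NE^1(X/U)$ is itself a convex cone, so the convex cone $C$ generated by $S$ satisfies $C \subseteq \NE^1(X/U)$. Therefore $C = \NE^1(X/U)$, and the latter is rational polyhedral.

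I expect no serious obstacle here: the only step requiring care is the convexity argument, where one must observe that the generators of the chambers are genuine effective classes in $\NE^1(X/U)$, so that the convex cone they span cannot escape the (convex) effective cone. Everything else is a formal manipulation of finitely generated cones, and the substantive geometric content has already been packaged into \pref{cr:mori_chamber}.
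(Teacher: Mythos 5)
Your proof is correct and takes essentially the same route as the paper: the corollary is stated there without proof precisely because it is the immediate consequence of the Mori chamber decomposition that you spell out, namely that $\NE^1(X/U)$ is a finite union of rational polyhedral chambers and is itself convex. Your chamber-by-chamber polyhedrality check (using that each $Y_i$ is an MDM and that $g_i$ has finitely many exceptional prime divisors) and the closing convexity argument are exactly the intended deduction.
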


\section{Characterization of MDMs via Cox Sheaf}

\subsection{VGIT for affine morphisms}
In this subsection we introduce the relative version of VGIT techniques and prove that certain relative GIT quotient of an affine morphism is an MDM.
First, we review VGIT for affine varieties.
We refer to \cite{MR1304906} and Section 3 of \cite{MR2462993} for details.

Let $G$ be a connected reductive group over $\bC$.
Let $\chi(G)_{\bQ} \coloneqq \chi(G) \otimes \bQ$ be the space of rational characters.
Note that it is a finite dimensional vector space.
Assume that $G$ acts on a normal affine variety $V$.
Let $\cO_{w}$ be the trivial line bundle of $V$ twisted by the character $w \in \chi (G)$.
The weight cone of $V$ will be denoted by
\begin{align}\label{eq:affine_effective}
\Omega_G (V) \coloneqq \text{cone} \{ w \in \chi(G) \mid H^0(V, \cO_{w})^G \neq 0 \} \subset \chi(G)_\bQ.
\end{align}

For each $w \in \chi(G)$,
it follows that the semi-stable locus $V^{ss}_{w}$
with respect to the linearization $\cO_{w}$ is non-empty if and only if $w \in \Omega_G (V)$ holds.

We summarize the results of \cite[Theorem 3.2]{MR2462993} to prove \pref{cr:rel_property_ssloci} below.
\begin{proposition}[$=$ {\cite[Theorem3.2]{MR2462993}}] \label{pr:property_ssloci}
There is a chamber decomposition
\begin{align*}
\Omega_G (V) = \bigcup_{j=1} ^{r} \sigma_j
\end{align*}
satisfying the following properties.
\begin{enumerate}
\item
Each $\sigma_j$ is a full dimensional rational polyhedral cone in $\chi(G)_{\bQ}$.

\item
$w, w' \in \operatorname{int}(\sigma_j) \cap \chi(G)$ holds for some $j$ if and only if
$w, w' \in \bigcup_{k=1}^r \operatorname{int}( \sigma_k)$
and
$V^{ss}_{w} = V^{ss}_{w'}$, where
$\operatorname{int}( \sigma)$ denotes the interior of a cone $\sigma$.

\item
If $w \in \sigma_j$, then $V^{ss}_{w'} \subset V^{ss}_{w}$
for any $w' \in \operatorname{int}(\sigma_j)$.
\end{enumerate}
\end{proposition}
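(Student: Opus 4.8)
The plan is to deduce the statement directly from the variation-of-GIT machinery of Dolgachev--Hu and Thaddeus, as packaged in \cite[Theorem 3.2]{MR2462993}: since $G$ is connected reductive and $V$ is a normal affine variety, the hypotheses of that theorem are met verbatim, so the essential task is to match the notation and to record why the three listed properties are precisely its output. First I would recall that the semistable locus $V^{ss}_w$ depends only on the ray $\bQ_{\geq 0}\, w$, because $\cO_w^{\otimes n} \cong \cO_{nw}$ as $G$-linearized line bundles and semistability is insensitive to passing to tensor powers; this is what makes it meaningful to speak of a decomposition of the cone $\Omega_G(V) \subset \chi(G)_{\bQ}$ rather than of the lattice $\chi(G)$ itself.

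The mechanism behind the decomposition is the Hilbert--Mumford numerical criterion. For a point $x \in V$ and a one-parameter subgroup $\lambda$ of $G$ for which $\lim_{t \to 0}\lambda(t)\cdot x$ exists in $V$, the numerical invariant $\mu^{\cO_w}(x,\lambda)$ is affine-linear in the variable $w$, and $x$ is $w$-semistable exactly when all of these invariants are nonnegative. Thus, as $w$ varies over $\Omega_G(V)$, the semistable locus can only change when $w$ crosses one of the rational hyperplanes cut out by these linear conditions. The core of the cited argument is that within $\Omega_G(V)$ only finitely many such walls actually occur; the chambers $\sigma_j$ are then the closures of the connected components of the wall complement, which are automatically full-dimensional rational polyhedral cones, giving property (1).

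Property (2) is the defining feature of the chambers: on the interior of a single chamber no wall is crossed, so $V^{ss}_w$ is constant there, while two characters lying in the interiors of \emph{different} chambers are separated by a wall and hence determine different semistable loci. Property (3) reflects the standard fact that the semistable locus can only grow as the character approaches a wall: perturbing from a face value $w \in \sigma_j$ into the interior $w' \in \operatorname{int}(\sigma_j)$ turns the non-strict numerical inequalities defining semistability into strict ones for the relevant one-parameter subgroups, so some points may become unstable but none newly become semistable, whence $V^{ss}_{w'} \subset V^{ss}_w$.

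The main obstacle I expect is not in deriving (1)--(3) once the wall structure is in hand, but in the finiteness of the walls inside $\Omega_G(V)$ --- this is genuinely the content of \cite[Theorem 3.2]{MR2462993}, and it rests on boundedness of the relevant families of one-parameter subgroups together with the finite generation of the algebra $\bC[V]$. Since I am permitted to quote that theorem, my proof would consist of verifying its hypotheses for the pair $(G,V)$ and translating its conclusion into the formulation above, rather than reproving the underlying finiteness.
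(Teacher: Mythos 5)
Your proposal is correct and matches the paper exactly: the paper states this proposition as a direct quotation of \cite[Theorem 3.2]{MR2462993} and offers no independent proof, which is precisely what you do after verifying that the hypotheses (connected reductive $G$ acting on a normal affine $V$) hold verbatim. Your sketch of the Hilbert--Mumford wall-and-chamber mechanism is accurate background, but the operative step --- citing the theorem --- is the same in both.
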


We use the following lemma in the proof of \pref{th:geom_quot_mdm} below.

\begin{lemma}\label{lm:section_inv}
The restriction map
\begin{align}\label{eq:identification00}
H^0(V, \cO_{w})^G \to H^0 (V^{ss}_{w}, \cO_{w})^G
\end{align}
is an isomorphism.
\end{lemma}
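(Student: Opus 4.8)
The plan is to deduce the statement from the normality of $V$ together with the structure of the ring of semi-invariants. Throughout set $R \coloneqq \bigoplus_{n \ge 0} H^0(V, \cO_{nw})^G$, which is a finitely generated $\bC$-algebra because $G$ is reductive and $V$ is affine; write $R_n \coloneqq H^0(V, \cO_{nw})^G$. If $w \notin \Omega_G(V)$, then both sides vanish (the target because $V^{ss}_w = \emptyset$, the source by \eqref{eq:affine_effective}), so the map is trivially an isomorphism; hence I assume $w \in \Omega_G(V)$, so that $V^{ss}_w \ne \emptyset$. For injectivity, note that since $V$ is an irreducible variety and $V^{ss}_w$ is a nonempty open subset, it is dense; as $\cO_w$ is a line bundle (a twist of $\cO_V$), restriction of a section to a dense open set is injective. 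This settles injectivity of \eqref{eq:identification00}.

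The substance is surjectivity. Since $V^{ss}_w$ is the union of the affine opens $V_f \coloneqq \{ f \ne 0 \}$ over homogeneous $f \in R_n$ with $n \ge 1$, a $G$-invariant section $s \in H^0(V^{ss}_w, \cO_w)^G$ is, on each $V_f$, a weight-$w$ semi-invariant element of $\bC[V_f]$, i.e. of the form $h_f / f^{m_f}$ with $h_f \in R_{1 + m_f n}$. Regarding $\cO_w$ as the trivial line bundle with twisted linearization, this exhibits $s$ as a single weight-$w$ semi-invariant rational function in $\bC(V)$ which is regular on $V^{ss}_w$. Because $V$ is normal, $s$ lies in $H^0(V, \cO_V)$, and then automatically in $R_1 = H^0(V, \cO_w)^G$ by its weight, provided I show that $s$ has no pole along any prime divisor $D$ contained in the unstable locus $V \setminus V^{ss}_w$; for prime divisors meeting $V^{ss}_w$ there is nothing to prove, since $s$ is regular there.

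It therefore remains to rule out a pole of $s$ along a prime divisor $D \subset V \setminus V^{ss}_w$, and this is the step I expect to be the main obstacle. A first useful reduction is that, since $G$ is connected, its action on the discrete set of prime divisors of $V$ is trivial, so $D$ is $G$-stable; consequently $v_D$ is a $G$-invariant valuation, and I may choose a semi-invariant local uniformizer $\pi$ of some weight $\theta \in \chi(G)$ (the one-dimensional representation $\mathfrak m_D / \mathfrak m_D^2$ affords a character, and reductivity lets me pick a semi-invariant representative). The plan is then to compare $v_D(s)$ with the weight $w$: the polar divisor of $s$ is effective, $G$-invariant and supported on the unstable locus, while instability of $D$ forces every element of $R_+$ to vanish along $D$. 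Passing to leading coefficients along $D$ records the relevant weights in the group $\Lambda_D \subset \chi(G)$ of semi-invariant weights of $\bC(D)^\times$, and I expect a pole (i.e. $v_D(s) < 0$) to force $s$ to acquire a pole along a further prime divisor that does meet $V^{ss}_w$, contradicting its regularity there; the naive valuative estimate $v_D(s) \ge -m_f\, v_D(f)$ is by itself too weak, so the weight bookkeeping is essential.

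Conceptually, this is the good-quotient statement that the affine GIT quotients $V^{ss}_w \to V /\!/_w G \simeq \Proj R$ and $V \to V /\!/ G \simeq \Spec R_0$ identify $G$-invariant sections of the linearized sheaves with honest sections on the quotients, the target quotient being projective over the affine base $V /\!/ G$. I would organize the comparison using the semistable-locus description of \pref{pr:property_ssloci} and the VGIT references \cite{MR1304906,MR2462993}, with the only delicate point being the behaviour along codimension-one components of the unstable locus, exactly where the weight argument of the previous paragraph is needed.
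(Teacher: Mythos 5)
Your reduction is sound as far as it goes, and it matches the paper's own starting point: viewing an invariant section of $\cO_w$ as a weight-$w$ semi-invariant rational function, using normality of $V$ to reduce extension across $V \backslash V^{ss}_{w}$ to the absence of poles along its codimension-one components, and noting that divisors meeting $V^{ss}_{w}$ cause no trouble (the paper phrases this as: by normality one may assume $V \backslash V^{ss}_{w} = \bigcup_i D_i$ and then removes the $D_i$ one at a time). But your proof stops exactly at the point you yourself flag as the main obstacle: you never prove $v_D(s) \ge 0$ for a prime divisor $D$ contained in the unstable locus. What you offer instead is a speculative plan --- that a pole along $D$ should ``force $s$ to acquire a pole along a further prime divisor that does meet $V^{ss}_{w}$'' --- and this expectation is neither justified by your weight bookkeeping with $\Lambda_D$ and semi-invariant uniformizers, nor is it the mechanism that actually works. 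This is a genuine gap, not a presentational one.

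The missing idea is a minimality trick, not a transfer of poles. Working on $V_{k-1} \coloneqq V \backslash (D_1 \cup \dots \cup D_{k-1})$ and aiming to cross $D_k$, set $m_k \coloneqq \min \{ \ord_{D_k}(s) \mid s \in H^0(V_{k-1}, \cO_w^{\otimes n})^G \ \text{for some} \ n > 0 \}$; one has $m_k > 0$ because $D_k$ lies in the unstable locus, so every positive-weight semi-invariant vanishes along it. Fix $s_0$ attaining this minimum. Given $t \in H^0(V_k, \cO_w)^G$, the product $t \cdot s_0^{\,n}$ extends across $D_k$ for $n \gg 0$ (the pole order of $t$ along $D_k$ is finite, each factor $s_0$ contributes vanishing order $m_k > 0$, and no new poles are created elsewhere on $V_{k-1}$); let $n_0$ be minimal with this property. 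If $n_0 \ge 1$, then $t \cdot s_0^{\,n_0}$ is itself a semi-invariant section on $V_{k-1}$, so by the very definition of $m_k$ its order along $D_k$ is at least $m_k$; hence $\ord_{D_k}(t \cdot s_0^{\,n_0-1}) \ge 0$, and by normality $t \cdot s_0^{\,n_0-1}$ also extends, contradicting the minimality of $n_0$. Thus $n_0 = 0$ and $t$ extends across $D_k$. This ``section of minimal vanishing order'' is precisely what your argument needs in place of the unproven pole-transfer claim; with it, your valuative framing goes through as an induction over the components $D_k$, which is exactly the paper's proof.
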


\begin{proof}
Let $D_1 \dots, D_r$ be the codimension one irreducible components of $V \backslash V^{ss}_{w}$ and
$V_k \coloneqq V \backslash (\bigcup_{i=1}^{k} D_i)$ be the open subsets of $V$ for $k=1, \dots r$.
To prove \pref{lm:section_inv},
we may assume that
$V \backslash V^{ss}_{w} = \bigcup_{i=1}^{r} D_i$
since
$V$ is normal.
Then it is sufficient to show that
\begin{align}\label{eq:equation}
H^0 (V_{k-1}, \cO_w) ^ G = H^0 (V_{k}, \cO_w) ^ G
\end{align}
for all $ k = 0, \dots, r$,
where we assume that $V_0 = V$.
Note that the restriction map
\begin{align*}
H^0 (V_{k-1}, \cO_w)^G \hookrightarrow H^0 (V_{k}, \cO_w) ^G
\end{align*}
is injective
for any $k$.
We define the integer $m_k$ by
\begin{align*}
m_k =
\min \{\ord_{D_k}(s) \mid s \in H^0(V_{k-1},\cO_w ^{\otimes n})^G \  \text{for some}\  n \in \bZ_{> 0}\}.
\end{align*}
It follows that $m_k  > 0$ since $s(D_k) = 0$ for any $ s \in H^0(V,\cO_w ^{\otimes n})^G$.
Fix $s \in H^0 (V_{k-1}, \cO_w)^G$ such that $\ord_{D_k}(s) = m_k$.
Assume that $\pref{eq:equation}$ does not hold.
Let us take any
$
t \in H^0 (V_{k}, \cO_w) ^G \backslash H^0 (V_{k-1}, \cO_w) ^G
$.
Then there exists $n>0$ such that
$t \cdot s^n \in H^0 (V_k, \cO_w ^{ \otimes n+1})^G$.
Let $n_0$ be the minimal positive number satisfying this condition.
By the choice of $m_k$, we obtain $\ord(t \cdot s^n) \ge m_k$.
This implies that $\ord_{D_i}(t \cdot s^{n-1}) \ge 0$,
and hence
$t \cdot s^{n-1} \in H^0 (V_{k-1}, \cO_w ^{ \otimes n_0})^G$
since $V$ is normal.
This contradicts the choice of $n_0$, so that we obtain the desired equation \pref{eq:equation}.
\end{proof}

Now let us consider the relative case.
Let $\pi \colon V \to U$ be a $G$-invariant affine morphism of finite type, where we assume that $U$ is a quasi-projective normal variety and $V$ is also normal.
For $\chi \in \chi(G)$,
we define the semi-stable (respectively, stable) locus of $V$ with respect to $\cO_{\chi}$
over $U$ as follows.

\begin{definition}\label{df:stability}
Let $U = \bigcup_i U_i$ be an affine covering of $U$.
For a character $\chi$, we define the \emph{semi-stable locus} $V^{ss}_{\chi /U}$ of $\cO_{\chi}$ over $U$ by
\begin{align*}
V^{ss}_{\chi /U } =\bigcup_i ({V_i})_{\chi}^{ss},
\end{align*}
where we set $V_i \coloneqq \pi^{-1}(U_i)$.

Similarly, we define the \emph{sable locus} $V^{s}_{\chi /U }$ of $\cO_{\chi}$ over $U$ by
\begin{align*}
V^{s}_{\chi /U } = \bigcup_i ({V_i})_{\chi}^{s}.
\end{align*}
This definition does not depend on the choice of the affine covering of $U$
by the lemma below.
We say $\chi_1, \chi_2 \in M$ are \emph{GIT equivalent} if they have the same semi-stable locus.
\end{definition}

\begin{lemma}\label{lm:well_defind_for_ssl}
Suppose that $U$ is an affine variety and $\chi \in \chi(G)$.
For any affine open subset $U' \subset U$, 
we set $V' \coloneqq \pi^{-1}(U')$.
Then we obtain
$V'^{ss}_{\chi} = V^{ss}_{\chi} \cap V'$.
\end{lemma}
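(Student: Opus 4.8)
The plan is to prove the claimed equality $V'^{ss}_{\chi} = V^{ss}_{\chi} \cap V'$ by two inclusions, working directly from the description of the affine GIT semi-stable locus and reducing the nontrivial inclusion to the case of a principal open of the base. Throughout, note that since $U$ is affine and $V' \coloneqq \pi^{-1}(U')$ is an affine $G$-stable open subvariety of $V$ (it is $G$-stable because $\pi$ is $G$-invariant, so $\pi(gv) = \pi(v)$ forces $gv \in V'$ whenever $v \in V'$), both $V^{ss}_{\chi}$ and $V'^{ss}_{\chi}$ are classical affine semi-stable loci, namely
\begin{align*}
V^{ss}_{\chi} = \{\, v \in V \mid \exists\, n>0,\ \exists\, s \in H^0(V, \cO_{\chi}^{\otimes n})^G \text{ with } s(v) \neq 0 \,\},
\end{align*}
and likewise for $V'$ with $H^0(V', \cO_{\chi}^{\otimes n})^G$ in place of $H^0(V, \cO_{\chi}^{\otimes n})^G$.

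First I would dispose of the easy inclusion $V^{ss}_{\chi} \cap V' \subseteq V'^{ss}_{\chi}$. Given $v$ in the left-hand side, choose $n>0$ and $s \in H^0(V, \cO_{\chi}^{\otimes n})^G$ with $s(v) \neq 0$. Because $V'$ is a $G$-stable open subset, the restriction $s|_{V'}$ again lies in $H^0(V', \cO_{\chi}^{\otimes n})^G$, and $s|_{V'}(v) = s(v) \neq 0$, whence $v \in V'^{ss}_{\chi}$.

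The substance of the lemma lies in the reverse inclusion $V'^{ss}_{\chi} \subseteq V^{ss}_{\chi}$, which I would prove pointwise. Let $v \in V'^{ss}_{\chi}$, and pick $n>0$ together with $s \in H^0(V', \cO_{\chi}^{\otimes n})^G$ such that $s(v) \neq 0$. Since $U$ is affine and $U' \subseteq U$ is open, there is a principal affine open $D(f)$, for some $f \in H^0(U, \cO_U)$, with $\pi(v) \in D(f) \subseteq U'$. Its preimage $\pi^{-1}(D(f))$ is the principal open of $V$ cut out by the pullback $\pi^* f \in H^0(V, \cO_V)$, and crucially $\pi^* f$ is $G$-invariant because $\pi$ is $G$-invariant. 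As $\pi^* f$ is invariant and invertible on $\pi^{-1}(D(f))$, we may clear denominators and write $s|_{\pi^{-1}(D(f))} = (\pi^* f)^{-k}\, t$ for some $k \geq 0$ and some $t \in H^0(V, \cO_{\chi}^{\otimes n})^G$; the section $t$ is again a semi-invariant of weight $n\chi$ precisely because multiplication by the invariant $\pi^* f$ does not change the $G$-weight. Finally $t(v) = (\pi^* f)(v)^k\, s(v) = f(\pi(v))^k\, s(v) \neq 0$, since $f(\pi(v)) \neq 0$ and $s(v) \neq 0$, so $t$ witnesses $v \in V^{ss}_{\chi}$.

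The only delicate step is this clearing-of-denominators argument: one must manufacture an honest global invariant section of $\cO_{\chi}^{\otimes n}$ over all of $V$ from a section a priori defined only over $V'$, without altering its $G$-weight. This is exactly where the $G$-invariance of the base function $f$, hence of $\pi^* f$, is indispensable, as it guarantees both that $\pi^{-1}(D(f))$ is a $G$-stable principal localization and that multiplication by powers of $\pi^* f$ preserves semi-invariance of weight $n\chi$. Combining the two inclusions yields $V'^{ss}_{\chi} = V^{ss}_{\chi} \cap V'$, from which the independence of the semi-stable locus on the choice of affine covering asserted in \pref{df:stability} follows at once.
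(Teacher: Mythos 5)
Your proof is correct and follows essentially the same route as the paper: both hinge on choosing a principal open $D(f) \subset U'$, using the $G$-invariance of $\pi^* f$, and invoking the standard extension lemma (multiply by a power of $f$ to extend a section over the principal open to a global invariant section, as in Hartshorne II, Lemma 5.3), the only cosmetic difference being that the paper extends sections of the pushforward sheaf over the affine base $U$ while you clear denominators directly on the affine total space $V$. If anything, you are slightly more careful than the paper in explicitly requiring $\pi(v) \in D(f)$, which is needed to conclude that the extended section does not vanish at $v$.
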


\begin{proof}
It is obvious that 
$V'^{ss}_{\chi} \supset V^{ss}_{\chi} \cap V'$
by the definition of semi-stable locus of affine varieties.
To prove the other inclusion, 
take $f \in H^0(U, \cO_U)$ such that $D(f) \subset U'$ holds.
Let 
$s \in \Gamma (V', \cO_{\chi} ^{\otimes m} )^{G}$ be a section and $p \in V'^{ss}_{\chi}$ be a point such that $s(p) \neq 0$.
Since $\Gamma (V', \cO_{\chi} ^{\otimes m} ) = \Gamma (U', \pi_* \cO_{\chi} ^{\otimes m} )$
and the pull back of $f$ is a $G$-invariant function on $V$,
there is some positive number $N>0$ such that $f^N s$ extends to a section
$\tilde{s} \in \Gamma (V', \cO_{\chi} ^{\otimes m} )^{G}$ by
\cite[Chapter 2, Lemma 5.3]{MR0463157}.
It follows that $\tilde{s}(p) \neq 0$ by the construction, so that
$p \in V^{ss}_{\chi} \cap V'$.
\end{proof}

For each $i$ in \pref{df:stability},
it is known that there exists the good quotient
\begin{align*}
{q_i}_{\chi} \colon ({V_i})_{\chi}^{ss} 
\to {Q_i}_{\chi} 
\coloneqq 
({V_i})_{\chi}^{ss} //G,
\end{align*}
which admits natural morphism to $U_i \subset U$.
The universality of categorical quotients and \pref{lm:well_defind_for_ssl} imply that
they can be glued, and we obtain the good quotient
\begin{align*}
q_{\chi} 
\colon
V^{ss}_{\chi /U } 
\to
Q_{\chi} 
\coloneqq
V^{ss}_{\chi /U } //G.
\end{align*}
Similarly we obtain the geometric quotient
\begin{align*}
q_{\chi} 
\colon
V^{s}_{\chi /U}
\to
V^{s}_{\chi /U} /G.
\end{align*}
Note that these are morphisms over $U$.
Moreover
we obtain the isomorphism
\begin{align}\label{eq:quotinet_proj}
 Q_{\chi} \simeq \Proj_U (\bigoplus_{n \in \bZ_{\ge 0}}(\pi_* \cO_{\chi} ^{\otimes n} )) ^G,
\end{align}
which can also be checked locally over $U$.
Let $\pi_{\chi} \colon Q_{\chi} \to U$ denote the canonical morphism induced by the universality of the quotient.

\begin{remark}\label{rm:finiteness}
Note that $\bigoplus_{n \ge 0}(\pi_* \cO_{\chi} ^{\otimes n} )$ is finitely generated over $U$, and hence finitely generated over $\bC$ since $U$ is of finite type over $\bC$.
Therefore we obtain that
$\bigoplus_{n \ge 0}(\pi_* \cO_{\chi} ^{\otimes n})^G$ is finitely generated over $U$
by applying the Nagata theorem which asserts 
that an invariant ring of a geometrically reductive group on an affine variety is finitely generated
(see, for example, \cite[Theorem 3.3]{MR2004511}).
\end{remark}
Let $\Omega_G (V/U)$ be the convex cone in $\chi(G)_{\bQ}$
generated by $w \in \chi(G)$ such that $V^{ss}_{w /U } \neq \emptyset$.
Then we can easily generalize \pref{pr:property_ssloci} to the relative case.

\begin{corollary}\label{cr:rel_property_ssloci}
There is a chamber decomposition
\begin{align*}
\Omega_G (V/U) = \bigcup_{j=1} ^{r} \sigma_j
\end{align*}
satisfying the following properties.
\begin{enumerate}
\item
Each $\sigma_j$ is a full dimensional rational polyhedral cone in $\chi(G)_{\bQ}$.

\item
$w, w' \in \operatorname{int}(\sigma_j) \cap \chi(G)$ holds for some $j$ if and only if
$w, w' \in \bigcup_{k=1}^r \operatorname{int}( \sigma_k)$
and
$V^{ss}_{w / U} = V^{ss}_{w' / U}$.

\item
If $w \in \sigma_j$, then $V^{ss}_{w' / U} \subset V^{ss}_{w / U}$ holds
for any $w' \in \operatorname{int}(\sigma_j)$.
\end{enumerate}
 
\end{corollary}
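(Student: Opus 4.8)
The plan is to reduce everything to the absolute affine situation treated in \pref{pr:property_ssloci} by means of a finite affine covering of $U$, and then to glue the resulting chamber structures. Since $U$ is of finite type it is quasi-compact, so I may fix a finite affine covering $U = \bigcup_{i=1}^{N} U_i$ and set $V_i \coloneqq \pi^{-1}(U_i)$; each $V_i$ is a normal affine variety carrying a $G$-action. Applying \pref{pr:property_ssloci} to every $V_i$ yields a chamber decomposition $\Omega_G(V_i) = \bigcup_k \sigma_{i,k}$ into finitely many full dimensional rational polyhedral cones, whose facets span finitely many rational hyperplanes in $\chi(G)_{\bQ}$. I would then define $\{\sigma_j\}_j$ to be the full dimensional cones cut out inside $\Omega_G(V/U)$ by the union of all these hyperplanes, that is, the common refinement of the $N$ decompositions restricted to $\Omega_G(V/U)$. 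Here one first observes that $\Omega_G(V/U) = \sum_{i} \Omega_G(V_i)$ is the Minkowski sum of the (rational polyhedral) cones $\Omega_G(V_i)$, hence itself rational polyhedral, so that this refinement makes sense and covers $\Omega_G(V/U)$ by full dimensional rational polyhedral cones; this gives property $(1)$.

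The single fact that makes the gluing transparent is a restriction identity. By \pref{lm:well_defind_for_ssl}, applied on the overlaps $U_i \cap U_j$, the chart-wise semi-stable loci agree on intersections, and therefore the relative semi-stable locus restricts correctly to each chart:
\begin{align*}
V^{ss}_{w/U} \cap V_i = (V_i)^{ss}_{w} \qquad (i = 1, \dots, N).
\end{align*}
Consequently the assignment $w \mapsto V^{ss}_{w/U}$ is completely determined by the family $w \mapsto (V_i)^{ss}_w$, and two characters have the same relative semi-stable locus if and only if they have the same semi-stable locus in every chart $V_i$.

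With this in hand the remaining assertions follow chamber by chamber. By construction the relative interior of each $\sigma_j$ meets no wall, so for every $i$ it lies either inside a single $\operatorname{int}(\sigma_{i,k})$ or entirely outside $\Omega_G(V_i)$ (as $\partial \Omega_G(V_i)$ is contained in the union of walls); in either case $(V_i)^{ss}_w$ is constant on $\operatorname{int}(\sigma_j)$ by \pref{pr:property_ssloci} $(2)$, and hence so is $V^{ss}_{w/U}$. The converse half of $(2)$ is exactly the restriction identity: if $V^{ss}_{w/U} = V^{ss}_{w'/U}$ with $w,w'$ in chamber interiors, then $(V_i)^{ss}_w = (V_i)^{ss}_{w'}$ for all $i$, whence $w,w'$ lie in a common $\operatorname{int}(\sigma_{i,k})$ for each $i$ by \pref{pr:property_ssloci} $(2)$, and therefore in the same $\operatorname{int}(\sigma_j)$. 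For $(3)$, given $w \in \sigma_j$ and $w' \in \operatorname{int}(\sigma_j)$, I note that $\operatorname{int}(\sigma_j) \subset \operatorname{int}(\sigma_{i,k})$ whenever $\sigma_j \subset \sigma_{i,k}$ (an elementary fact for two full dimensional cones), so \pref{pr:property_ssloci} $(3)$ gives $(V_i)^{ss}_{w'} \subset (V_i)^{ss}_{w}$ for every $i$, the inclusion being vacuous on those charts where $(V_i)^{ss}_{w'} = \emptyset$; taking the union over $i$ yields $V^{ss}_{w'/U} \subset V^{ss}_{w/U}$.

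I expect the only delicate point to be combinatorial rather than geometric: organizing the common refinement of the $N$ chamber structures and checking that the relative interior of a refined chamber sits inside a single chamber, or outside the weight cone, of every chart, including the correct handling of characters for which some $(V_i)^{ss}_w$ is empty. Once the restriction identity $V^{ss}_{w/U}\cap V_i = (V_i)^{ss}_w$ is established, all three statements become immediate translations of the corresponding assertions of \pref{pr:property_ssloci}, which is precisely why the generalization is routine.
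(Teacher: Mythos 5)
You follow the same overall strategy as the paper --- a finite affine covering, the restriction identity $V^{ss}_{w/U}\cap V_i=(V_i)^{ss}_w$ coming from \pref{lm:well_defind_for_ssl}, and a chart-by-chart reduction to \pref{pr:property_ssloci} --- but your definition of the chambers $\sigma_j$ contains a genuine gap. You define the $\sigma_j$ as the cells cut out by the union of \emph{all} hyperplanes spanned by facets of the chart-wise chambers $\sigma_{i,k}$. A chamber decomposition as in \pref{pr:property_ssloci} is not, in general, induced by a hyperplane arrangement: a hyperplane spanned by a facet of one chamber may pass through the interior of another chamber. (Already in $\chi(G)_{\bR}\simeq\bR^2$, take three full-dimensional chambers with extremal rays through $(1,0)$, $(0,1)$, $(-1,-1)$; the line $\bR\cdot(1,0)$ is spanned by a facet of the first chamber but passes through the interior point $(-1,0)$ of the chamber generated by $(0,1)$ and $(-1,-1)$.) When this happens, your decomposition is strictly finer than the chart-wise ones, and the ``if'' direction of condition (2) fails: two characters $w,w'$ lying in the interior of one and the same chamber $\sigma_{i,k_i}$ of every chart (hence with $V^{ss}_{w/U}=V^{ss}_{w'/U}$) can lie on opposite sides of such a foreign hyperplane, and therefore in the interiors of two \emph{different} cells $\sigma_j\neq\sigma_{j'}$. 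Your closing inference ``whence $w,w'$ lie in a common $\operatorname{int}(\sigma_{i,k})$ for each $i$ \dots\ and therefore in the same $\operatorname{int}(\sigma_j)$'' is exactly this unjustified step: common membership in $\operatorname{int}(\sigma_{i,k_i})$ controls the sides only of the hyperplanes supporting $\sigma_{i,k_i}$ itself, not the sides of hyperplanes spanned by facets of the other chambers.

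The repair is to take the coarser common refinement that the paper actually uses: let the $\sigma_j$ be the intersections $\bigcap_i \sigma_{i,k_i}$ (one chamber from each chart) having nonempty interior. Then $\operatorname{int}\bigl(\bigcap_i\sigma_{i,k_i}\bigr)=\bigcap_i\operatorname{int}(\sigma_{i,k_i})$, so ``same cell'' means precisely ``same chamber in every chart'', and both directions of (2), as well as (1) and (3), follow from \pref{pr:property_ssloci} by exactly the chart-by-chart arguments you give. Note also that the paper shortens the bookkeeping by observing that $\Omega_G(V/U)=\Omega_G(V_i)$ for every $i$ (an invariant section over $V_i$ restricts to a nonzero invariant section over $\pi^{-1}(U_i\cap U_{i'})$ and extends over $V_{i'}$ after multiplication by the pullback of a suitable function, as in the proof of \pref{lm:well_defind_for_ssl}); this is stronger than your Minkowski-sum identity and makes all of your ``outside $\Omega_G(V_i)$'' case distinctions vacuous.
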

\begin{proof}
Let $U = \bigcup_i U_i$ be a finite affine covering.
Note that $\Omega_G (V/U) = \Omega_G (V_i)$ for any $i$ by \pref{lm:well_defind_for_ssl}.
Then we obtain the proposition by taking intersections of the chambers appeared in \pref{pr:property_ssloci} for all $i$ whose interiors are non-empty.
\end{proof}
We call each $\sigma_j$ a \emph{GIT chamber} of $V$ over $U$.

\begin{theorem}\label{th:geom_quot_mdm}
Take a character $\chi \in \chi(G)$ such that $Q_{\chi}$ is normal and an algebraic fibre space over $U$
such that $\Pic(Q_{\chi} /U)_{\bQ} \simeq \N^1(Q_{\chi} /U)_{\bQ}$.
Assume the following properties.
\begin{enumerate}
\item
$V^{ss}_{\chi /U } = V^{s} _{\chi / U }$,
and
$\codim _V (V \backslash V^{ss}_{\chi /U } ) \ge 2$.

\item
$Q_{\chi}$ is $\bQ$-factorial.

\item
Both of the following maps $\alpha$ and $q_{\chi}^{*}$ are isomorphisms.
\begin{align}\label{eq:character_pic}
\chi (G)_{\bQ} 
\xrightarrow[\alpha]{} (\Pic(V)^{G} / \Pic(U))_{\bQ} 
\stackrel {(1)} \simeq (\Pic(V^{ss}_{\chi /U })^{G} / \Pic(U))_{\bQ} 
\xleftarrow[q_{\chi} ^*]{} \Pic(Q_{\chi} /U)_ {\bQ},
\end{align}
where we define $\alpha(\chi) \coloneqq [\cO_{\chi}] \in \Pic(V)^{G} / \Pic(U)$.
\end{enumerate}
Then $\pi_{\chi} \colon Q_{\chi} \to U$ is an MDM.
\end{theorem}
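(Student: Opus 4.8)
The plan is to verify the four conditions of \pref{df:MDM} for $\pi_{\chi} \colon Q_{\chi} \to U$, using the VGIT chamber structure from \pref{cr:rel_property_ssloci} together with the section-identification \pref{lm:section_inv} to transport the combinatorics of the character space $\chi(G)_{\bQ}$ onto $\N^1(Q_{\chi}/U)_{\bQ}$. Condition (1) of \pref{df:MDM} is exactly hypothesis (2) of the theorem, and condition (2) is part of the standing assumption $\Pic(Q_{\chi}/U)_{\bQ} \simeq \N^1(Q_{\chi}/U)_{\bQ}$. So the real content is conditions (3) and (4), i.e.\ producing the nef and movable cone structure out of the GIT chambers.

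The key idea is the isomorphism $\chi(G)_{\bQ} \simeq \Pic(Q_{\chi}/U)_{\bQ} \simeq \N^1(Q_{\chi}/U)_{\bQ}$ coming from hypothesis (3). First I would show that under this identification the GIT chamber $\sigma_j$ containing $\chi$ in its interior corresponds to $\Nef(Q_{\chi}/U)$. Concretely, for a character $w$ in the same chamber as $\chi$ we have $V^{ss}_{w/U} = V^{ss}_{\chi/U}$ by \pref{cr:rel_property_ssloci}(2), so the line bundle $\cO_w$ descends to a line bundle on $Q_{\chi}$ that is $\pi_{\chi}$-semiample — indeed the descent of $\cO_w$ is globally generated relative to $U$ away from the codimension-$\ge 2$ locus, and hypothesis (1) ($V^{ss} = V^s$ with complement of codimension $\ge 2$) ensures no base locus in codimension one, so the descended class is $\pi_{\chi}$-semiample. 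This simultaneously gives that $\Nef(Q_{\chi}/U)$ is rational polyhedral (it is the image of the rational polyhedral cone $\sigma_j$) and that every $\pi_{\chi}$-nef class is $\pi_{\chi}$-semiample, which is condition (3) (equivalently $(3')$ and $(2')$ of \pref{pr:MDM2}). The identification of sections via \pref{lm:section_inv} and the Proj description \pref{eq:quotinet_proj} are what let me compute $\R_{\pi_{\chi}}(Q_{\chi}, L)$ in terms of invariant sections on $V$ and hence read off semiampleness.

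For condition (4) I would run over the neighboring GIT chambers. For a character $w'$ lying in an adjacent chamber $\sigma_k$ whose semistable locus $V^{ss}_{w'/U}$ still has complement of codimension $\ge 2$ in $V$, the quotient $Q_{w'}$ is related to $Q_{\chi}$ by an SQM $g \colon Q_{\chi} \dashrightarrow Q_{w'}$ over $U$ (both are quotients of open sets differing only in codimension $\ge 2$, so they are isomorphic in codimension one). Each such $Q_{w'}$ again satisfies (1), (2), (3) by the same argument applied to $w'$ in place of $\chi$. The movable cone $\Mov(Q_{\chi}/U)$ should then be the union over precisely these chambers — the subcone of $\Omega_G(V/U)$ where the unstable locus stays in codimension $\ge 2$ — and \pref{cr:base_movable} is the tool to match ``small unstable locus'' with ``movable class.'' Assembling $\Mov(Q_{\chi}/U) = \bigcup_i g_i^*(\Nef(Q_{w_i}/U))$ over this finite collection of chambers gives condition (4).

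The main obstacle I anticipate is the descent and semiampleness bookkeeping in the relative setting: I must be careful that a character $w$ in the interior of the chamber of $\chi$ really descends to a line bundle on $Q_{\chi}$ (not merely on an open subset) and that this descent is $\pi_{\chi}$-semiample rather than just relatively nef. This is where hypothesis (1), the codimension-$\ge 2$ condition and $V^{ss} = V^s$, is essential — it forces the quotient map to be geometric and the descended sheaf to be reflexive hence an honest line bundle by $\bQ$-factoriality, and it kills codimension-one base locus. The identification \pref{lm:section_inv} of $G$-invariant sections on $V$ with those on $V^{ss}_{\chi/U}$ is the precise statement that makes the section algebra $\R_{\pi_{\chi}}(Q_{\chi}, L)$ computable and finitely generated (via \pref{rm:finiteness}), so verifying semiampleness reduces to the surjectivity of $\alpha_{mL}$, which follows once enough invariant sections are produced. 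Matching the chamber boundaries in $\chi(G)_{\bQ}$ with the facets of $\Nef$ and $\Mov$ in $\N^1(Q_{\chi}/U)_{\bQ}$ is the remaining combinatorial step, and I expect it to go through cleanly once the dictionary between chambers and cones is set up.
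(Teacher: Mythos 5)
Your overall strategy --- transporting the VGIT chamber decomposition of $\Omega_G(V/U)$ onto $\N^1(Q_{\chi}/U)_{\bQ}$ via hypothesis (3) and reading off conditions (3) and (4) of \pref{df:MDM} from the chambers --- is the same as the paper's, and your argument for one inclusion is sound: if $w$ lies in the GIT chamber of $\chi$ then $V^{ss}_{w/U}=V^{ss}_{\chi/U}$, invariant sections of $\cO_w^{\otimes n}$ descend, and the descended class $L_w$ is $\pi_{\chi}$-semiample. But there is a genuine gap at the heart of the proof: you assert that the GIT chamber of $\chi$ \emph{equals} $\Nef(Q_{\chi}/U)$ (``it is the image of the rational polyhedral cone $\sigma_j$''), whereas your argument only shows the GIT chamber maps \emph{into} the semiample cone. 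Nothing you say rules out a $\pi_{\chi}$-nef (even $\pi_{\chi}$-ample) class $L_y$ whose character $y$ lies outside the closed chamber of $\chi$; in that case $\Nef(Q_{\chi}/U)$ would strictly contain the image of $\sigma_j$ and its polyhedrality would not follow. This converse inclusion is exactly the hard step in the paper: one shows that GIT chambers coincide with Mori chambers, i.e.\ that if $L_y$ and $L_z$ are Mori equivalent (in the interior of the same Mori chamber) then $V^{ss}_{y/U}=V^{ss}_{z/U}$. The paper does this by writing $L_y\sim_{\bQ} f_y^*(A_y)+E_y$ via \pref{pr:div_map}, proving that $V^{ss}_{\chi}\setminus(V^{ss}_{\chi}\cap V^{ss}_{y})=q_{\chi}^{-1}(\Supp(E_y))$ up to codimension one, and then using $\Supp(E_y)=\Supp(E_z)$ together with the section identification \pref{lm:section_inv} to force $V^{ss}_{y}=V^{ss}_{z}$. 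Your remark that ``matching the chamber boundaries \dots\ is the remaining combinatorial step'' and that you ``expect it to go through cleanly'' is precisely the step that is not combinatorial but geometric, and it is where the real work lies.

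A second, related gap: for condition (4) you claim each neighboring quotient $Q_{w'}$ ``again satisfies (1), (2), (3) by the same argument applied to $w'$ in place of $\chi$.'' This is circular --- the theorem's hypotheses (in particular $\bQ$-factoriality of the quotient and the isomorphisms in \pref{eq:character_pic}) are assumed only for $\chi$, not for $w'$, so they must be \emph{proved} for the adjacent models. The paper does this explicitly: $\bQ$-factoriality of $Q_y$ is deduced from the decomposition $\Pic(Q_{\chi}/U)_{\bQ}=f_y^*(\N^1(Q_y/U))\times\langle\text{exceptional divisors}\rangle_{\bQ}$ by writing the strict transform of a prime divisor as $f_y^*A+E+\pi_{\chi}^*B$, and the semiampleness of the generators of $\Nef(Q_z/U)$ for a small model $Q_z$ is proved by taking boundary characters $y\in\partial C_z$, using $V^{ss}_{z/U}\subset V^{ss}_{y/U}$ from \pref{cr:rel_property_ssloci}(3), and lifting a nonvanishing invariant section through the commutative square of restriction isomorphisms. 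Neither of these appears in your proposal, so as written the proposal establishes only that the GIT chamber of $\chi$ consists of semiample classes; conditions (3) and (4) of \pref{df:MDM} remain unverified.
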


\begin{remark}
Under the assumptions of \pref{th:geom_quot_mdm},
we can check that for any $y \in \chi(G)$ there exists $L \in \Pic (Q_{\chi})$ such that
$q_{\chi}^* L \sim_{\bQ} \cO_{y}$
holds.
We write it as $L_y$.
In the following proof we let $V^{ss}_{y}$ denote $V^{ss}_{y/U}$ for simplicity. 
\end{remark}

\begin{proof}[Proof of \pref{th:geom_quot_mdm}]
Let $y \in \chi(G)$ be a character.
Then for any sufficiently divisible $n>0$
we obtain the canonical identification
\begin{align}\label{eq:identification1}
\pi_* ({\cO_{y}} ^{\otimes n} )^G = \pi_* (\cO_{y}^ {\otimes n} |_{V^{ss}_{\chi}} ) ^G = 
{\pi_y}_* (L_y ^n)
,
\end{align}
where the first equality follows from the assumption
$\codim (V \backslash {V^{ss} _{\chi}}) \ge 2 $
and the second equality follows from the descent.
This implies that
the isomorphism induced by \pref{eq:character_pic}
\begin{align*}
\psi \colon \chi(G)_{\bQ} \to
\Pic(Q_{\chi} / U)_{\bQ}
\end{align*}
identifies 
$\Omega(V/U)$ with $\NE^1 (Q_{\chi} / U)_{\bQ}$.

As in \pref{rm:finiteness},
the $\cO_U$-algebra
$\bigoplus_{n \ge 0}(\pi_* \cO_{\chi} ^{\otimes n})^G$
is finitely generated.
Then for any $y \in \chi(G)$
we obtain that
$\R_{\pi_{\chi}} (Q_{\chi}, L_y)$ is finitely generated
by \pref{eq:identification1}.
Let 
$ f_y \colon Q_{\chi} \dashrightarrow Q_y$ 
be the canonical rational map between the quotients.
Then we can check that $f_y$ coincides with $\varphi_{L_y}$ by \pref{eq:identification1}.
Moreover, by \pref{pr:div_map}, there is a $\pi_y$-ample divisor $A_y$ and a $f_y$-fixed divisor $E_y$
such that
\begin{align}\label{eq:decomp_mov+fix}
L_y \sim_{\bQ} f_y^*(A_y) + E_y.
\end{align}

We compare the GIT chamber decomposition of $\Omega(V/U)$ and the Mori chamber decomposition of $\NE^1 (Q_{\chi}/U)_{\bQ}$ via $\psi$.
Note that if two characters $y$ and $y'$ are GIT equivalent, the line bundles $L_y$ and $L_{y'}$ are Mori equivalent.
This implies that GIT chamber decomposition is finer than Mori chamber decomposition.
Hence there are only finitely many Mori chambers, and each of them is a rational polyhedral cone by 
\pref{cr:rel_property_ssloci}.

Let us show that the Mori chambers coincide with the GIT chambers.
Take $y, z \in \chi(G)$ such that $L_y$ and $L_z$ are in the interior of the same Mori chamber.
Note that
$f_z = f_y$ is a birational contraction
since they are in the interior of $\NE ^1 (Q_{\chi} /U)$.
Considering the decomposition \pref{eq:decomp_mov+fix},
$E_y$ and $E_z$ have the same support, which are the full divisorial exceptional locus
of $f_y$. Moreover, the number of the components of $E_y$ is $\rho(Q_{\chi}/U) - \rho( Q_{y}/U)$.
This follows from \pref{lm:rat_pull_back},
\pref{pr:div_map},
and the assumption that $z$ and $y$ are general.
Note that there is an equality
\begin{align*}
\Pic(Q_{\chi} / U)_{\bQ} 
= f_y^* (N^1(Q_y /U)) \times \langle \text{exceptional divisors} \rangle _{\bQ},
\end{align*}
which follows from the coincidence of the dimensions of both sides.
Let $D$ be a prime divisor on $Q_y$ and $\bar{D} \subset Q_{\chi}$ be its strict transform.
We can write $\bar{D}$ as
$\bar{D} = f^* A + E + \pi_{\chi} ^* B$,
where
$A \in \Pic (Q_y)$, $B \in \Pic(U)$, and ${f_y}_{*}E = 0$.
Hence we have $\Supp(D) = \Supp(A + \pi_y ^* B)$, so that the Weil divisor $D$ is linearly equivalent to the Cartier divisor $A + \pi_y ^* B$ after multiplying both divisors by a positive integer.
Therefore $D$ is $\bQ$-Cartier. This implies that $Q_y$ is $\bQ$-factorial.

Let us prove
$V^{ss} _{y} = V^{ss} _{z}$.
For this it is sufficient to show that $V^{ss}_z \subset V^{ss}_y$.
Furthermore, we may assume that $U$ is an affine variety since the problem is local over $U$.
Let $p_z \in V^{ss}_z$ be a point.
Note that there exists a very ample divisor $A'_y$ on $Q_z = Q_y$ such that
$\cO_{y}^ {\otimes n}|_{V^{ss}_y} = q_y ^* A'_y$ by the GIT construction
(this follows from \pref{eq:quotinet_proj} or \cite[Theorem 1.10]{MR1304906}).
There exists
$\sigma' \in H^0 (Q_y, A'_y)$
such that
$\sigma'(q_z(p_z)) \neq 0$,
and we obtain
$\sigma \in H^0 (V, \cO_{y}^ {\otimes n})^G$ 
satisfying
$\sigma|_{V^{ss}_y} = q_y ^* (\sigma ')$
by \pref{lm:section_inv}.
We claim that
\begin{align}\label{eq:claim}
\cO_y ^n |_{V^{ss}_z} = q_z ^* A'_y
\ \ 
\text{and}
\ \ 
\sigma |_{V^{ss}_z} = q_z^* \sigma'.
\end{align}
To see this, 
consider the decomposition \pref{eq:decomp_mov+fix}.
We may assume that $A_y$ is base point free over $U$ by multiplying it by some positive integer.
Then we obtain the following sequence of isomorphisms
\begin{align*}
H^0 (Q_y,A_y) 
\simeq H^0 (Q_{\chi},f_y ^* A_y) 
\xrightarrow[+E_y]{\sim} H^0(Q_{\chi},f_y ^* A_y +E_y)
\\
= H^0 (Q_{\chi},L_y) 
\xrightarrow[q_{\chi}^*]{\sim} H^0(V^{ss}_{\chi},\cO_y ^ {\otimes n}) ^G
= H^0(V,\cO_y ^ {\otimes n}) ^G,
\end{align*}
where the first isomorphism follows from \pref{lm:pull_back}.
Since $A_y$ is base point free,
this implies that 
$V^{ss}_{\chi} \backslash (V^{ss}_{\chi} \cap V^{ss}_{y}) = q_{\chi}^{-1}(\Supp(E_y))$
up to codimension one.
Similarly we can check that 
$V^{ss}_{\chi} \backslash (V^{ss}_{\chi} \cap V^{ss}_{z}) = q_{\chi}^{-1}(\Supp(E_z))$.
However, since $\Supp(E_z) =  \Supp(E_y)$ and 
$\codim (V \backslash {V^{ss} _{\chi}}) \ge 2$ hold, 
we obtain that $V^{ss}_{z} = V^{ss}_{y}$ up to codimension one.
This implies that \pref{eq:claim} holds by considering the restriction to 
$V^{ss}_{z} \cap V^{ss}_{y}$.
By \pref{eq:claim}, it follows that $p_z \in V^{ss}_y$ since $\sigma (p_z) \neq 0$.

Now we know that Mori chambers and GIT chambers coincide and each chamber is of the form 
$\overline {f_z ^* (\Ample(Q_z/U))} \times \excep(f_z)$ 
such that $f_z$ is birational and $Q_z$ is $\bQ$-factorial.
In particular, $\Mov (Q_{\chi}/U)$ is the union of the chambers such that $f_z$ is small.
To conclude the proof, 
it is sufficient to show that $\Nef(Q_z/U)$ is generated by $\pi_z$-semiample divisors.
Note that there exists the canonical identification
$\NE ^1(Q_z/U ) = \NE ^1 (Q_{\chi}/U)$
since $f_z$ is small.
Let $C_z$ be a Mori chamber containing $z$.
Then it follows that  $C_z = {f_z} ^*\Nef(Q_z/U)$. 
The same argument of the proof of \pref{eq:claim} implies that
$V^{ss}_{\chi} = V^{ss}_{z}$ up to codimension one, and hence we obtain
\begin{align*}
(\pi_* \cO_y |_{U_{\chi}})^G = (\pi_* \cO_y)^G  =(\pi_* \cO_y |_{U_{z}})^G.
\end{align*}
Take any $y \in \partial C_z$.
We prove that $L_y \in \Pic(Q_z)$ is $\pi_z$-semiample.
We can check this locally over $U$, and hence we assume that $U$ is affine.
Take any $p \in Q_z$.
It is sufficient to show that 
there exists some $s \in H^0 (Q_z, L_y^{\otimes n})$ such that
$s(P) \neq 0$.
To see this, take $p' \in V^{ss}_z$ such that $q_z(p')=p$.
Note that $p' \in V^{ss}_z \subset V^{ss}_y$ by \pref{cr:rel_property_ssloci}
since $y \in \partial C_z$.
Then there exists $s' \in H^0 (V, \cO_y ^{\otimes n})^G$ such that $s'(p') \neq 0$
by the definition of the semi-stable locus.
Note that we have the following diagram:
\begin{align*}
\xymatrix{
  H^0(V,\cO_y^{\otimes n})^G \ar[d]^{\text{\rotatebox{265}{$\sim$}}}\\
  H^0(V^{ss}_{z},\cO_y ^{\otimes n})^G \ar[r]^{\sim}  & H^0(V^{ss}_{\chi}, \cO_y ^{\otimes n}) ^G \\
  H^0(Q_z, L_y ^{\otimes n}) \ar[u]^{q_z^*} \ar[r]^{\sim} 
  & H^0(Q_{\chi}, L_y ^ {\otimes n}) \ar[u]^{q_{\chi}^*}_{\text{\rotatebox{265}{$\sim$}}},
   }
\end{align*}
where the horizontal isomorphisms follow from that $V^{ss}_{\chi} = V^{ss}_{z}$
and $Q_{y} = Q_{\chi}$ up to codimension one.

By the commutativity of the diagram, $q_z^*$ is an also isomorphism,
and hence there exists
$s \in H^0(Q_z,L_y ^{\otimes n})$ 
such that
\begin{align*}
s(p) = q_z^* s (p') = s' (p') \neq 0.
\end{align*}
\end{proof}

\subsection{Cox sheaf and MDM}

Let $\pi_X \colon X \to U$ be a projective algebraic fibre space such that $X$ is $\bQ$-factorial and $\Pic(X/U)_{\bQ} \simeq \N ^{1}(X/U)_{\bQ} $.
For a collection of line bundles $\cL\coloneqq (L_1, \dots ,L_r)$ on $X$, we define the $\cO_U$ algebra $R(X/U, \cL)$ by
\begin{align*}
R(X/U, \cL) =\bigoplus_{m \in \bZ^r} {\pi_X}_ * (\cL ^m),
\end{align*}
where $\cL ^m$ denotes $\bigotimes_{i=1}^{r} {L_i}^{m_i}$ for $m = (m_1, \dots, m_r) \in \bZ^r$.
Similarly, we define $R(X/U, \cL)_{+}$ by
\begin{align*}
R(X/U, \cL)_{+} =\bigoplus_{m \in ({\bZ_{\ge 0 }}) ^r} {\pi_X}_* (\cL ^m).
\end{align*}

We call $R(X/U, \cL)$ a \emph{Cox sheaf} of $X/U$ if 
$([L_1] \dots ,[L_r])$ is a basis of $\Pic(X/U)_{\bQ}$.
Note that the finite generation as an $\cO_U$-algebra of a Cox sheaf does not depend on the choice of $\cL$.
We often take $\cL$ such that $([L_1] \dots ,[L_r])$ is a basis of $\Pic(X/U)_{\text{free}}$.


\begin{proposition}\label{pr:normality}
The scheme $ \Spec_UR(X/U, \cL)$ is normal.
\end{proposition}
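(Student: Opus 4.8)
The plan is to reduce the normality of the relative spectrum $\Spec_U R(X/U,\cL)$ to a local statement over $U$ and then to a statement about the total coordinate ring that can be verified using Serre's criterion. Normality is a local property on the base, so I would first take an affine open cover $U=\bigcup_j U_j$ and set $V_j\coloneqq \pi_X^{-1}(U_j)$; it suffices to show that $\Spec R(V_j/U_j,\cL|_{V_j})$ is normal for each $j$, where now $U_j=\Spec R_j$ is affine and the section algebra is an honest $R_j$-algebra $A_j\coloneqq\bigoplus_{m\in\bZ^r} H^0(V_j,\cL^m)$. The key structural point is that each graded piece $H^0(V_j,\cL^m)$ is the module of global sections of a reflexive (indeed, on the $\bQ$-factorial normal variety $X$, the rank-one reflexive) sheaf on the normal variety $V_j$, so $A_j$ is a multigraded subring of the function field built from reflexive hulls.

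The heart of the argument is to verify Serre's conditions $(R_1)$ and $(S_2)$ for $A_j$. For $(S_2)$: since each summand $H^0(V_j,\cO_X(D))$ for a Weil divisor $D$ with $[\cO_X(D)]=\cL^m$ is computed as sections over the complement of a codimension-$\ge 2$ locus (the normality of $X$ forces $H^0(V_j,\cO_X(D))=H^0(V_j\setminus Z,\cO_X(D))$ for any closed $Z$ of codimension $\ge 2$), the ring $A_j$ inherits the depth-$2$ condition from the normality of $X$; concretely, $A_j$ is an intersection of localizations indexed by the prime divisors of $X$, and such an intersection of discrete valuation rings is integrally closed in its fraction field. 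For $(R_1)$: I would pass to the open locus where $X$ is smooth and each $L_i$ is a genuine line bundle, observe that over this locus $R(X/U,\cL)$ is a relative spectrum of a sheaf of algebras that is étale-locally a polynomial (Laurent) construction, hence regular in codimension one, and then check that the codimension-$\ge 2$ locus removed does not affect $(R_1)$ by the same reflexivity bookkeeping.

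Concretely I expect to realize $A_j$ as the integral closure description: for the function field $K(X)$ and each prime divisor $\Gamma\subset X$ with valuation $v_\Gamma$, one has
\begin{align*}
A_j=\bigcap_{\Gamma}\; \{\, f\in K(X)[t_1^{\pm},\dots,t_r^{\pm}] \;:\; v_\Gamma(f)\ge 0 \,\},
\end{align*}
where the $t_i$ track the multidegree, and an intersection of valuation rings of this shape is integrally closed. This identification, together with the finite generation we may invoke from the running hypotheses, is what upgrades ``integrally closed in the fraction field'' to ``normal as a scheme of finite type.''

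The main obstacle I anticipate is the bookkeeping that ties the multigraded structure to reflexivity: one must make sure that the multiplication $H^0(V_j,\cL^m)\otimes H^0(V_j,\cL^{m'})\to H^0(V_j,\cL^{m+m'})$ lands in the correct reflexive hull and that no embedded components are introduced in passing from line bundles on the smooth locus to reflexive sheaves on all of $X$. I would handle this by restricting everything to the big open set $X^{\mathrm{sm}}$ where the $L_i$ are line bundles (its complement has codimension $\ge 2$ since $X$ is $\bQ$-factorial and normal), computing there, and then invoking normality of $X$ to extend sections uniquely — this is precisely the step where $\bQ$-factoriality and the normality of $X$ are used, and getting the codimension estimates exactly right is where the care is needed.
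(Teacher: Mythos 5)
Your core idea---realizing $A_j=\bigoplus_{m}H^0(V_j,\cL^m)$ as an intersection of valuation rings inside the Laurent ring $K(X)[t_1^{\pm},\dots,t_r^{\pm}]$---is sound and is a genuinely different presentation from the paper's proof, which is geometric: the paper forms $P\coloneqq\Spec_X(\bigoplus_{m\in\bZ^r}\cL^m)$, notes that over any open $W\subset X$ trivializing all the $L_i$ one has $\varphi^{-1}(W)\simeq W\times(\bC^*)^r$, so $P$ is a normal integral scheme, and then identifies $R=H^0(X,\cR)=H^0(P,\cO_P)$, the ring of global functions of a normal integral scheme, hence normal (at bottom this is also an intersection-of-normal-rings argument, so the two proofs share the same kernel). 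Two points of precision on your version: the $L_i$ are by definition honest line bundles (the paper's $\cL$ is a tuple of line bundles, not Weil divisor classes), so all of your reflexive-hull and smooth-locus bookkeeping is unnecessary; and your displayed formula only becomes correct once you extend $v_\Gamma$ to $K(X)[t^{\pm}]$ as the Gauss valuation with $v_\Gamma(t_i)=\ord_\Gamma(s_i)$ for chosen rational sections $s_i$ of the $L_i$---with the naive extension $v_\Gamma(t_i)=0$ the intersection computes sections of trivial bundles.

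The Serre-criterion scaffolding, however, contains a genuine gap. Serre's criterion $(R_1)+(S_2)$ is a statement about Noetherian rings, and finite generation of $R(X/U,\cL)$ is \emph{not} among the running hypotheses here: the setup of this section assumes only that $X$ is $\bQ$-factorial with $\Pic(X/U)_{\bQ}\simeq\N^1(X/U)_{\bQ}$, and the entire point of \pref{th:int_mdm_cox} is to characterize when the Cox sheaf is finitely generated, so normality must be proved without it. Moreover, your $(R_1)$ check is carried out on the total space $\Spec_X\cR$ (or its restriction over a big open set), whereas the scheme in the statement is the relative affine hull $\Spec_U R(X/U,\cL)$; the natural map from the former to the latter is in general far from an open immersion (already for $\cL=(\cO_X)$ it collapses all of $X$), and the fact that the relevant unstable locus has codimension $\ge 2$ is proved only later, in \pref{pr:MDM_is_the_quotient}, under a finite-type hypothesis and using ampleness---so it cannot be invoked here. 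The repair is simply to discard that track: an intersection of valuation rings of a field is integrally closed, and an integrally closed domain has normal spectrum since every localization is again integrally closed; neither step needs Noetherianness or finite generation. With that pruning, your valuation argument alone proves the proposition as stated.
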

\begin{proof}
Without loss generality, we may assume that $U$ is an affine variety $\Spec(A)$,
and hence it is sufficient to show that
the $A$-algebra
$R \coloneqq \bigoplus_{m \in \bZ^r} H^0 (X,\cL ^m)$ is normal.
Consider the $\cO_X$-algebra $\cR \coloneqq \bigoplus_{m \in \bZ ^ r} \cL^m$ and
the corresponding affine morphism $\varphi \colon P \coloneqq \Spec_X (\cR) \to X$.
Then we obtain the natural identification
\begin{align*}
R = H^0 (X,\cR) = H^0 (P, \cO_P).
\end{align*}
Take an  open local trivialization $W \subset X$ of $L_1, \dots L_r$.
Then
it follow that $ \varphi ^{-1} (W) \simeq W \times (\bC^*)^r $.
Since $W$ and $(\bC^*)^r$ are normal, $\varphi ^{-1} (W)$ is also normal,
and hence $P$ is.
In particular, for each affine open subset $V \subset P$,
we obtain that
$\Gamma (V,\cO_P)$ is normal.
This implies that $H^0 (P, \cO_P)$ is also normal.
\end{proof}

\begin{proposition}\label{pr:Cox_f_g}
If $\pi_X$ is an MDM, a Cox sheaf $R(X/U, \cL)$ is finitely generated over $\cO_U$.
\end{proposition}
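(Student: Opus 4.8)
The plan is to reduce finite generation of the Cox sheaf to the finite generation of the section algebras of single divisors (\pref{cr:section_ring_f_g}), organized by the Mori chamber decomposition (\pref{cr:mori_chamber}). First I would reduce to the case where $U = \Spec A$ is affine: finite generation of a quasi-coherent $\cO_U$-algebra may be checked on a finite affine cover of the quasi-compact $U$, so we may take $U = \Spec A$, and then the Cox sheaf becomes the $\bZ^r$-graded $A$-algebra $R = \bigoplus_{m \in \bZ^r} H^0(X, \cL^m)$ with $R_0 = A$ (since $\pi_X$ is an algebraic fibre space). Its support is the monoid $S = \{ m \in \bZ^r \mid \cL^m \text{ is } \pi_X\text{-effective} \}$, whose associated cone is $\NE^1(X/U)$. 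Because $\NE^1(X/U)$ is rational polyhedral (established just above), Gordan's lemma shows that $S$ is a finitely generated monoid.

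Next, a purely combinatorial patching step reduces the problem to one chamber at a time. Write the Mori chamber decomposition $\NE^1(X/U) = \bigcup_i \sigma_i$ from \pref{cr:mori_chamber} into finitely many rational polyhedral cones $\sigma_i$, and set $R_{(i)} = \bigoplus_{m \in S \cap \sigma_i} H^0(X, \cL^m)$. Every homogeneous element of $R$ has its degree in some $\sigma_i$, hence lies in $R_{(i)}$; and since $\sigma_i$ is a cone, products of homogeneous elements of $R_{(i)}$ remain in $R_{(i)}$. Consequently, if each $R_{(i)}$ is a finitely generated $A$-algebra, then $R$ is generated by the union of the finitely many generating sets, and is therefore finitely generated. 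This reduces everything to per-chamber finite generation.

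To treat a single chamber I would use the birational contraction $g_i \colon X \dashrightarrow Y_i$ to the MDM $Y_i$ with $\sigma_i = g_i^*(\Nef(Y_i/U)) \times \Exc(g_i)$. For $m \in \sigma_i$ one has $\cL^m \sim_{\bQ, U} g_i^*(A_m) + E_m$ with $A_m$ nef on $Y_i$ and $E_m$ a $g_i$-fixed divisor supported on $\Exc(g_i)$, and \pref{lm:pull_back} gives $H^0(X, \cL^m) \cong H^0(Y_i, A_m)$. Thus $R_{(i)}$ is, after adjoining the finitely many canonical sections of the $g_i$-exceptional prime divisors (a polynomial extension, harmless for finite generation), isomorphic to the nef-graded algebra $\bigoplus_a H^0(Y_i, A)$ of the MDM $Y_i$, the sum running over nef classes $a$.

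The heart of the matter, and the main obstacle, is therefore the finite generation of this nef-graded algebra of an MDM; that is, upgrading the single-divisor statement \pref{cr:section_ring_f_g} to a genuinely multigraded one. Here the MDM hypothesis is essential: every nef class on $Y_i$ is $\pi$-semiample, so by the fan structure \pref{th:mov_cont} the rational polyhedral cone $\Nef(Y_i/U)$ is governed by finitely many semiample morphisms, and by the relative Kleiman criterion its interior is the $\pi$-ample cone. I would reduce the nef-graded algebra to the multi-section ring of finitely many semiample generators of the monoid $\Nef(Y_i/U) \cap \N^1(Y_i/U)_{\bZ}$ (Gordan): pulling these back through their common semiample fibration, and using that any strictly positive combination lies in the interior and is thus $\pi$-ample, the generic part becomes an ordinary multi-homogeneous coordinate ring of ample divisors, which is classically finitely generated. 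Gordan's lemma again absorbs the finitely many lattice points in the fundamental domains, while normality (\pref{pr:normality}) together with the Noetherianity of the ample multi-section ring controls the finitely many module generators, and the boundary faces are handled by induction on the relative Picard number via the associated contraction morphisms (using \pref{pr:divisorial} and the finiteness of the chamber structure). Assembling these, each $R_{(i)}$ is finitely generated, hence so is $R$ on each affine piece, and therefore the Cox sheaf is a finitely generated $\cO_U$-algebra.
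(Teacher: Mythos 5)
Your reduction steps coincide with the paper's own proof: the paper likewise writes $R(X/U,\cL)=\bigcup_i R_{C_i}$ over the Mori chambers of \pref{cr:mori_chamber}, observes that it suffices to prove each $R_{C_i}$ is finitely generated, and uses \pref{lm:pull_back} to replace the chamber generators $J_j=g_1^*A_j+E_j$ by the corresponding semiample divisors $A_j$ on $Y_1$. Where you diverge is exactly the step you single out as the heart: the multigraded finite generation over the nef (equivalently semiample) classes. The paper does not argue this by any interior/boundary decomposition; it invokes the relative Zariski lemma (\pref{lm:zariski}, the relative form of Hu--Keel's 2.8~LEMMA), whose proof is geometric: pass to multiples that are relatively globally generated, map to a product of projective bundles, and use vanishing/regularity to get surjectivity of multiplication maps in bounded multidegree.

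Your replacement for that step has a genuine gap. You treat ``the generic part'' --- the subalgebra graded by interior (ample) classes --- as ``an ordinary multi-homogeneous coordinate ring of ample divisors, which is classically finitely generated,'' and handle boundary faces by induction on $\rho$. But the interior-graded subalgebra is not finitely generated, and cannot be: the monoid of interior lattice points of a full-dimensional cone (together with $0$) is never finitely generated. Concretely, for $Y=\bP^1\times\bP^1$ with $B_1=\cO(1,0)$, $B_2=\cO(0,1)$, the interior degrees are $\{(a,b):a,b\ge 1\}$, and each degree $(n,1)$ is indecomposable there; since $H^0(\cO(n,1))\neq 0$ for every $n$, any homogeneous generating set of the interior-graded algebra must contain elements of degree $(n,1)$ for all large $n$. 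The classical ample statement concerns the full $\bN^k$-grading, where \emph{every} nonzero degree is ample --- precisely what fails when the $B_j$ are merely semiample. What your split loses is that sections of interior degree near a facet (such as $(n,1)$) are obtained only by multiplying boundary-degree sections into interior ones, so the boundary and interior cannot be handled separately; controlling those mixed multiplication maps via vanishing theorems is the actual content of Zariski's lemma. (Secondarily, \pref{pr:divisorial} covers only elementary divisorial contractions, while faces of $\Nef(Y_i/U)$ also correspond to small and fiber-type contractions whose targets need not be $\bQ$-factorial, so the induction as stated does not recurse; this part is reparable, but the interior/boundary strategy itself is not.) The correct completion of your argument is simply to prove or quote \pref{lm:zariski} at this point, as the paper does.
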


\begin{proof}
Let $\{C_i \}$ be the set of all Mori chambers.
By \pref{cr:mori_chamber}, it follows that $R(X/U,\cL) =  \bigcup R_{C_i}$,
where we define the subring $R_{C_i}$ of $R(X/U,\cL)$ by
\begin{align*}
R_{C_i} = \bigoplus_{\cL^m \in C_i} {\pi_X}_* (\cL^m).
\end{align*}
It is sufficient to show that each $R_{C_i}$ is finitely generated.
We will prove it for $i=1$ with out loss of generality.
Take line bundles $J_1, \dots ,J_d $ such that their classes in $\Pic(X/U)$ generate the cone $C_1$.
We set
$\cJ \coloneqq (J_1, \dots ,J_d)$.
If $R_+ (X/U, \cJ)$ is finitely generated,
then $R_{C_1}$ is finitely generated.
Hence it is sufficient to show that $R_+ (X/U, \cJ)$ is finitely generated.
By \pref{cr:mori_chamber},
we obtain
$J_j = g_1 ^*A _j +E_j$,
where $A_j$ is a $\pi_X$-ample divisor on $Y_1$ and
$E_j$ is effective and $g_1$-fixed.
We denotes the collection $(A_1, \dots ,A_d)$ by $\cA$.
Then $R_+ (X/U, \cJ)$ is finitely generated if and only if $R_+ (X/U, \cA)$ is by \pref{lm:pull_back}.
However, $R_+ (X/U, \cA)$ is finitely generated by the following \pref{lm:zariski}
\end{proof}

\begin{lemma}[the relative version of Zariski lemma]\label{lm:zariski}
Let $\pi \colon X \to U$ be an algebraic fibre space.
Suppose that $\cA \coloneqq (A_1, \dots A_n)$ is a collection of $\pi$-semiample divisor on $X$.
Then $R(X/U, \cA)_{+}$ is finitely generated over $\cO_{U}$.
\end{lemma}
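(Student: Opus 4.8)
I need to prove the relative version of Zariski's lemma: for an algebraic fibre space $\pi \colon X \to U$ and a collection $\cA = (A_1, \dots, A_n)$ of $\pi$-semiample divisors, the $\cO_U$-algebra $R(X/U, \cA)_+ = \bigoplus_{m \in (\bZ_{\ge 0})^n} \pi_* (\cA^m)$ is finitely generated.

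**The plan.** The statement is local on $U$, so I would first reduce to the case where $U = \Spec(A)$ is affine, replacing the finite generation of the sheaf of $\cO_U$-algebras by finite generation of the $A$-algebra $R = \bigoplus_{m} H^0(X, \cA^m)$. The key geometric input is semiampleness: for each $i$, since $A_i$ is $\pi$-semiample, there is (after replacing $A_i$ by a multiple, which is harmless for finite generation) a morphism $\psi_i \colon X \to Y_i$ over $U$ and a relatively ample divisor $H_i$ on $Y_i$ with $A_i = \psi_i^* H_i$. The cleanest approach is to assemble these into a single morphism. First I would choose one integer $k$ that works simultaneously for all $i$, so that each $kA_i$ is the pullback of a $\pi$-ample divisor; since finite generation of $R_+(X/U,\cA)$ is equivalent to that of the Veronese-type subalgebra built from the $kA_i$ (a standard fact for finitely many generators of a $\bZ^n_{\ge 0}$-graded algebra), I may assume each $A_i$ itself is globally generated and is the pullback of a $\pi$-ample divisor.

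**The main construction.** Consider the morphism $\psi \colon X \to Y := \Proj_U\bigl(\bigoplus_{m \ge 0} \pi_* \cO_X(m(A_1 + \dots + A_n))\bigr)$, or more robustly the product map $X \to \prod_{U} Y_i$ followed by taking the image $Y$; by construction each $A_i$ descends to a relatively ample divisor $H_i$ on $Y$, i.e.\ $A_i = \psi^* H_i$ with $\psi$ an algebraic fibre space over $U$. Because $\psi_* \cO_X = \cO_Y$, the projection formula gives a natural isomorphism $\pi_* (\cA^m) \simeq {\pi_Y}_* (\mathcal{H}^m)$ for every $m \in (\bZ_{\ge 0})^n$, where $\pi_Y \colon Y \to U$ is the structure morphism and $\mathcal{H} = (H_1, \dots, H_n)$. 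Hence $R(X/U, \cA)_+ \simeq R(Y/U, \mathcal{H})_+$ as $\cO_U$-algebras, and I have reduced to the case where the $A_i$ are relatively \emph{ample}. In that situation, for a sufficiently positive linear combination $B = \sum_i H_i$ the divisor $B$ is $\pi_Y$-ample, so $\bigoplus_{l \ge 0} {\pi_Y}_*(\cO(lB))$ is a finitely generated $\cO_U$-algebra by relative Serre vanishing/standard projective geometry, and the full multigraded algebra $R(Y/U,\mathcal{H})_+$ is a finite module over (a Veronese of) this single-graded algebra by an elementary multigraded-commutative-algebra argument. This yields finite generation.

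**The hard part.** The genuine obstacle is the descent step: verifying that the different semiample divisors $A_i$ can be realized as pullbacks of relatively ample divisors via \emph{one} common algebraic fibre space $\psi \colon X \to Y$, and that $\psi_*\cO_X = \cO_Y$ so that the projection formula isomorphism $\pi_*(\cA^m) \simeq {\pi_Y}_*(\mathcal{H}^m)$ is valid for all multi-indices simultaneously. Taking $Y$ to be the relative image of $X$ under the product of the $\psi_i$ requires checking that the fibre-space property (and hence $\cO_Y = \psi_* \cO_X$) is preserved, which can be arranged using the Stein factorization and the fact, already recorded in the remark following \pref{df:cont} (via \cite[Example 2.1.12]{MR2095471}), that algebraic fibre spaces are detected by algebraic closedness of function fields. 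Once this geometric reduction is in place, the remaining commutative algebra is routine and local over $U$.
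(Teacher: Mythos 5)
Your overall skeleton (localize over $U$, pass to multiples, map to a common model $Y$ with $\psi_*\cO_X=\cO_Y$, transfer the algebra by the projection formula) is the right one, and it is essentially the Hu--Keel argument that the paper simply cites without reproducing. But the two steps where the actual content of Zariski's lemma lives are both incorrect as written. First, your reduction does not produce relatively \emph{ample} divisors on $Y$: whether you take $Y$ to be the ample model of $A_1+\dots+A_n$ or the image of the product map $X\to Y_1\times_U\dots\times_U Y_n$, the descended divisors $H_i$ are in general only semiample over $U$, with only the \emph{sum} $\sum_i H_i$ relatively ample. Already $X=\bP^1\times\bP^1$ over $U=\Spec\bC$ with $A_1=\cO(1,0)$, $A_2=\cO(0,1)$ shows this: there $Y=X$, $\psi$ is the identity, and neither $H_i=A_i$ is ample. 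So you have not reduced to the ample case; you are facing the original problem again, merely with the extra information that the sum is ample.

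Second, and fatally, your finishing claim --- that $R(Y/U,(H_1,\dots,H_n))_+$ is a \emph{finite module} over (a Veronese of) the single-graded algebra $\bigoplus_{l\ge 0}{\pi_Y}_*\cO(lB)$, $B=\sum_i H_i$ --- is false even when every $H_i$ is ample. Take $Y=\bP^1$ over a point, $n=2$, $H_1=H_2=\cO(1)$, so $B=\cO(2)$: the module action shifts multidegrees only along the diagonal direction $(1,1)$, so no element of the algebra ever reaches the degrees $(a,0)$; since $H^0(\bP^1,\cO(a))\neq 0$ for every $a\ge 0$, infinitely many module generators would be required. The multigraded section ring of ample bundles is a finitely generated \emph{algebra} but not a finite module over the diagonal subalgebra, and proving that algebra finite generation is exactly the nontrivial point: one needs a Segre-type embedding by multiples of the $H_i$ together with the relative Serre theorem for the finitely many residue classes, or Fujita vanishing and Castelnuovo--Mumford regularity, or --- cleanest in your setup --- the projectivization trick: after replacing the $A_i$ by multiples that are relatively globally generated, the totalization of $R(X/U,\cA)_+$ is $\bigoplus_{d\ge 0}\pi_*\Sym^d\bigl(\bigoplus_i\cO_X(A_i)\bigr)$, i.e.\ the section algebra of the single relatively semiample line bundle $\cO(1)$ on $\bP_X\bigl(\bigoplus_i\cO_X(A_i)\bigr)$, and the one-bundle semiample case follows from the projection formula to its relative ample model plus relative Serre. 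With such a replacement your outline can be closed; as written, the key finite-generation mechanism is missing.
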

\begin{proof}
The proof of \cite[2.8. LEMMA]{MR1786494}, which is written for the case $U = \Spec \bC$, applies to the situation of \pref{lm:zariski} without essential change.
\end{proof}

In the rest of this section, as an application of \pref{th:geom_quot_mdm},
we prove that the finite generation of a Cox sheaf implies that $\pi$ is an MDM.
Consider the algebraic torus $T \coloneqq \Hom(\Pic(X/U)_{\text{free}},\bC ^ *)$ of dimension $r$.
Then it follows that
\begin{align}\label{eq:character_cooresponding}
\chi(T)_{\bQ} \simeq \Pic(X/U)_{\bQ} \simeq \bigoplus \bQ L_i,
\end{align}
where the first isomorphism follows from the natural identification $\chi(T) \simeq \Pic(X/U)_{\text{free}}$.
We denote the line bundle on $X$ corresponding to a character $y \in \chi(T)$
via the above isomorphism by $L_{(y)}$.

We set $V \coloneqq \Spec_{U} (R(X/U, \cL))$ and
let $\pi \colon V \to U$ be the natural morphism.
We have the action of $T$ on $V$ which corresponds to the grading of $R(X/U, \cL)$ by $\bZ^r$ as follows.
\begin{align*}
 T \times {\pi_X}_* (L_{(y)}) \to {\pi_X}_* (L_{(y)}) \colon (t,s) \mapsto y(t) s
\end{align*}
Note that $\pi$ is $G$-invariant with respect to this action.

\begin{proposition}\label{pr:MDM_is_the_quotient}
Assume that $V$ is of finite type over $U$.
If we take a character $\chi \in \chi(T)$ which corresponds to a $\pi$-ample line bundle,
then
$V^{ss}_{\chi /U } = V^{s} _{\chi / U }$,
the quotient $Q_{\chi}$ is isomorphic to $X$,
and
$\codim _V (V \backslash V^{ss}_{\chi /U } ) \ge 2$.
\end{proposition}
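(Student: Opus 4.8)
The plan is to reduce everything to the case of affine $U$ and then to identify the semistable locus with the universal torsor over $X$. First I would note that $V^{ss}_{\chi/U}$, $V^{s}_{\chi/U}$ and $Q_{\chi}$ are all defined by restriction to an affine open cover of $U$ (see \pref{df:stability}, \pref{lm:well_defind_for_ssl} and \pref{eq:quotinet_proj}), so it suffices to treat the case $U=\Spec A$ with $V=\Spec R$, where $R=\bigoplus_{m\in\bZ^r}H^0(X,\cL^m)$. Write $L\coloneqq L_{(\chi)}$ for the $\pi$-ample line bundle attached to $\chi$. The isomorphism $Q_{\chi}\simeq X$ is the quickest part: the weight $n\chi$ piece of $R$ is ${\pi_X}_*(L^n)$, so the invariant subalgebra computing $Q_{\chi}$ in \pref{eq:quotinet_proj} is $\bigoplus_{n\ge 0}(\pi_*\cO_{\chi}^{\otimes n})^T=\bigoplus_{n\ge0}{\pi_X}_*(L^n)=\R_{\pi_X}(X,L)$, and since $L$ is ample over $U$ we obtain $Q_{\chi}\simeq\Proj_U\R_{\pi_X}(X,L)\simeq X$ over $U$.

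Next I would introduce the relative spectrum $P\coloneqq\Spec_X(\cR)$ of $\cR=\bigoplus_{m\in\bZ^r}\cL^m$, exactly as in the proof of \pref{pr:normality}; it is a $T$-torsor over $X$ and satisfies $\Gamma(P,\cO_P)=R$. The heart of the argument is a local comparison: for a section $s\in H^0(X,L^n)$, viewed as a $T$-semi-invariant element of $R$ of weight $n\chi$, the basic open set $V_s=\{s\neq0\}$ satisfies $\Gamma(V_s,\cO)=R[1/s]=\bigoplus_{m}H^0(X_s,\cL^m)=\Gamma(P|_{X_s},\cO)$, where $X_s=\{s\neq0\}\subset X$ is affine because $L$ is ample. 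As both sides are affine with the same coordinate ring, the natural map restricts to an isomorphism $P|_{X_s}\xrightarrow{\sim}V_s$. Since $L$ is ample the charts $\{X_s\}$ cover $X$, and because over the affine base one has $V^{ss}_{\chi/U}=\bigcup_{n,s}V_s$ with $s$ ranging over $\bigcup_nH^0(X,L^n)$, taking this union identifies $V^{ss}_{\chi/U}$ with the open subscheme $P\subset V$.

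From this identification the first two assertions follow at once. The torus $T$ acts freely on the torsor $P$, and the orbits are the fibres of $P\to X$, which are closed in $P=V^{ss}_{\chi/U}$; hence every semistable point is stable and $V^{ss}_{\chi/U}=V^{s}_{\chi/U}$. Moreover the good quotient $q_{\chi}\colon V^{ss}_{\chi/U}\to Q_{\chi}$ becomes the torsor projection $P\to X$, recovering $Q_{\chi}\simeq X$ compatibly with $\pi_{\chi}$.

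Finally I would establish the codimension bound. Since $V$ is normal (\pref{pr:normality}) and $\Gamma(V,\cO_V)=R=\Gamma(P,\cO_P)=\Gamma(V^{ss}_{\chi/U},\cO)$, it is enough to rule out a prime divisor $D$ contained in the unstable locus $Z=V\setminus V^{ss}_{\chi/U}=V(\mathfrak a)$, where $\mathfrak a$ is the homogeneous ideal generated by $\bigcup_{n>0}H^0(X,L^n)$. Such a $D$ is automatically $T$-invariant; if it dominates $X$ it descends to a prime divisor $\bar D$ on $X$, and since $L^n$ is base point free for $n\gg0$ not all of its sections vanish along $\bar D$, contradicting $\mathfrak a\subset\mathfrak p_D$; the remaining possibility is that $D$ lies in the locus where the torus coordinates degenerate, which cannot occur in codimension one because $\chi$ is ample, that is, lies in the interior of the weight cone. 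I expect this last codimension estimate to be the main obstacle: making the dichotomy precise amounts to controlling the $T$-invariant height-one primes of the relative Cox ring $R$, which is where the $\bQ$-factoriality of $X$ and the freeness of $\Pic(X/U)$ (through the factorial grading of $R$) should enter. Once $\codim_V(V\setminus V^{ss}_{\chi/U})\ge2$ is in place, all three conclusions are proved.
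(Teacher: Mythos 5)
Your identification of the semistable locus with the relative spectrum $P \coloneqq \Spec_X(\cR)$, $\cR = \bigoplus_{m \in \bZ^r} \cL^m$, is correct, and for the first two assertions it gives a genuinely different and arguably cleaner route than the paper's. The paper never makes the torsor picture explicit: it shows that all characters attached to $\pi_X$-ample bundles have the same semistable locus (via surjectivity of multiplication maps ${\pi_X}_*(nA) \otimes {\pi_X}_*(nL_{(\chi_2)}) \to {\pi_X}_*(nNL_{(\chi_1)})$), then bounds isotropy groups using semi-invariant sections for a generating set of ample characters, and finally deduces closedness of orbits from finiteness of stabilizers. Your argument --- the natural map $P \to V$ restricts to isomorphisms $P|_{X_s} \xrightarrow{\sim} V_s$, so $V^{ss}_{\chi/U} \simeq P$ is a $T$-torsor over $X$ with free action and closed orbits --- obtains $V^{ss}_{\chi/U} = V^{s}_{\chi/U}$ and $Q_\chi \simeq X$ in one stroke, and even sharpens \pref{rm:isotropy_finiteness} (trivial rather than finite stabilizers), which would simplify the Kempf descent step in \pref{cr:quotient_satisfies_}. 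The reduction to affine $U$ via \pref{lm:well_defind_for_ssl} and the computation of the invariant ring for $Q_\chi$ agree with the paper.

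The codimension-two assertion, however, has a genuine gap, which you flag yourself. Your dichotomy is not well formed: a prime divisor $D$ contained in the unstable locus $Z = V \backslash P$ cannot ``dominate $X$'', since $Z$ is disjoint from $P$ and there is no morphism from $Z$ to $X$ at all; and ``the locus where the torus coordinates degenerate'' is not a defined object, nor is it explained why interiority of $\chi$ in the weight cone would forbid divisorial components of $Z$. The bound is also not formal: for a $\bZ^r$-graded domain whose grading is not by the full free part of the relative Picard group (e.g.\ $\bC[x,y]$ with $\deg x = 1$, $\deg y = 0$ and $\chi = 1$), the unstable locus of a character interior to the weight cone can be a divisor, so one must genuinely use that every graded piece is a full section space $H^0(X, \cL^m)$. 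The paper closes this step with a short commutative-algebra argument: over affine $U$, choose $\sigma, \tau \in H^0(X, L) = H^0(V, \cO_{\chi_L})^T$ whose zero divisors on $X$ have no common component (possible since $L$ is $\pi_X$-ample); both lie in the ideal $I$ of the reduced unstable locus, and by \cite[LEMMA 2.7]{MR1786494} they form a regular sequence in $R(X/U, \cL)$, whence $\codim_V V(I) \ge 2$. Substituting this regular-sequence argument for your dichotomy completes your proof; the rest of your proposal can stand as written.
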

\begin{proof}
Note that
$(\cO_{{V/U},\chi})^T = {\pi_X}_* (L_{(y)})$
holds for any character $y \in \chi(T)$ by definition.
Then, by \pref{eq:quotinet_proj} and \pref{rm:finiteness}, 
we obtain that
$\R_{\pi_X} (X, L_{(y)})$ is finitely generated over $\cO_U$ and
\begin{align*}
Q_y = \Proj_{U} (\R_{\pi_X} (X, L_{(y)})).
\end{align*}
In particular, if we take $\chi \in \chi(T)$ corresponding to a $\pi$-ample divisor $L_{(\chi)}$,
it follows that $Q_x = X$.

Let $\chi_1$, $\chi_2 \in \chi(T)$ be any two characters corresponding to $\pi_X$-ample line bundles.
We prove
$V^{ss}_{\chi_1/U} = V^{ss}_{\chi_2/U}$.
If we take sufficiently large number $N>0$,
then $A \coloneqq N L_{(\chi_1)} - L_{(\chi_2)}$ is $\pi_X$-ample.
By the relative version of \cite[Example 1.2.22]{MR2095471},
the natural map
\begin{align*}
{\pi_X}_* (nA) \otimes {\pi_X}_* (nL_{(\chi_2)}) \to {\pi_X}_* (nNL_{(\chi_1)})
\end{align*}
is surjective for any sufficiently large integer $n>0$.
This implies
\begin{align*}
V^{ss}_{\chi_1/U} = V^{ss}_{\chi_2/U} \cap V^{ss}_{\chi_A},
\end{align*}
where $\chi_A$ is the character corresponding to $A$.
In particular, we obtain $V^{ss}_{\chi_1/U} \subset V^{ss}_{\chi_2/U}$ and the other inclusion follows by the same argument.
We denote the semi-stable locus for a character corresponding to a $\pi_X$-ample divisor
by $W$. 

For $h \in V$, let $T_h \subset T$ be the isotropy group   of $h$.
We can easily check that
if $t \in T_h$,
then 
$y(t) =1$
hols 
for any 
$y \in \chi(V)$
such that
there exists an affine open subset $ U' \subset U$ containing $\pi(h)$
and 
$s \in \Gamma(\pi_X^{-1}(U'), L_{(y)}) =\Gamma(U', \pi_*\cO_{y})^T $ satisfying $s(h) \neq 0$.
Take characters $\chi_1, \dots , \chi_r \in \chi(T)$ which generate $\chi(T)$ as a group, and assume that each $\chi_i$ corresponds to a $\pi_X$-ample divisor $L_{\chi_i}$.
There exists a number $N_i >0$ such that
$\chi_i (t^{N_i}) = {\chi_i}^{N_i} (t) = 1$
when $t \in T_h$ for some $h \in V^{ss}_{\chi_i/U}$
by the above argument and the definition of the semi-stable locus.
Since $\{\chi_i\}_{i}$ generate $\chi(T)$,
we obtain that
$y(g^N) = y^N (g) = 1$ for any $y \in \chi(T)$
by taking
$N \coloneqq \prod _{i=1} ^r N_i$.
This implies that the isotropy group of any $h \in W$ is finite.

To prove $W$ is the stable locus of $\chi \in \chi(T)$,
it is sufficient to show that the orbit of each $h \in W$ is closed.
However, this follows from the fact that the isotropy group of any $h' \in W$ is finite
since
if there exists a point $p \in \overline {O(h)} \backslash O(h)$,
we obtain $\dim G_p >0$, where $O(h)$ denotes the orbit of $h$.

Finally we show that $\codim(V \backslash W) \ge 2$.
Since the problem is local over $U$, we may assume that $U$ is affine.
Let $L$ be a $\pi_X$-ample divisor on $X$.
Take 
$\sigma, \tau \in H^0(X, L) = H^0(V, \cO_{\chi_L})^T$
such that the zero divisors on $X$ have no common components,
where we denote the character corresponding to $L$ via \pref{eq:character_cooresponding} by $\chi_L$.
Let $I \subset H^0 (V,\cO_V)$ be the ideal corresponding to the closed subset 
$V \backslash W$ with reduced structure.
By the definition of the semi-stable locus, we obtain 
$\sigma, \tau \in I$.
This shows that $(\sigma, \tau) \subset R(X/U, \cL)$ is a regular sequence by \cite[LEMMA 2.7]{{MR1786494}},
which states that if $\sigma, \tau \in H^0(X, L)$ are sections of a nontorsion line bundle whose zero divisors have no common component, then $(\sigma, \tau) \subset R(X/U, \cL)$ is a regular sequence.
Hence we obtain $\codim_V (V \backslash W) \ge 2$.

\end{proof}

\begin{remark}\label{rm:isotropy_finiteness}
In the proof of the proposition above,
we proved that
the isotropy group $T_h$ has order $N$ for any $h \in V^s_{\chi /U}$.
\end{remark}

\begin{corollary}\label{cr:quotient_satisfies_}
Under the same assumptions as in \pref{pr:MDM_is_the_quotient},
$\pi \colon V \to U$ satisfies the conditions (1),(2),(3) in \pref{th:geom_quot_mdm}.
Hence $\pi_X \colon X \to U$ is an MDM.
\end{corollary}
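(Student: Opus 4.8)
The plan is to check that the morphism $\pi\colon V\to U$, together with the character $\chi$, satisfies all the hypotheses of \pref{th:geom_quot_mdm}, and then to read off the conclusion. By \pref{pr:MDM_is_the_quotient} we already have an isomorphism $Q_\chi\simeq X$ over $U$, together with the equality $V^{ss}_{\chi/U}=V^{s}_{\chi/U}$ and the codimension estimate $\codim_V(V\setminus V^{ss}_{\chi/U})\ge 2$. The isomorphism $Q_\chi\simeq X$ transfers every standing assumption on $X$ to $Q_\chi$: since $X$ is normal, $\bQ$-factorial, an algebraic fibre space over $U$, and satisfies $\Pic(X/U)_\bQ\simeq\N^1(X/U)_\bQ$, so does $Q_\chi$. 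This disposes of the running hypotheses of \pref{th:geom_quot_mdm} as well as of condition $(2)$, while condition $(1)$ is exactly the remaining content of \pref{pr:MDM_is_the_quotient}.

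It remains to verify condition $(3)$, namely that both $\alpha$ and $q_\chi^{*}$ in \pref{eq:character_pic} are isomorphisms. The main tool here is descent of line bundles along the good quotient $q_\chi\colon V^{ss}_{\chi/U}\to Q_\chi$. Because $V^{ss}_{\chi/U}=V^{s}_{\chi/U}$ and, by \pref{rm:isotropy_finiteness}, every point of $V^{s}_{\chi/U}$ has finite isotropy of the fixed order $N$, the map $q_\chi$ is a geometric quotient which, away from the locus of nontrivial stabilizers, is a principal $T$-bundle over $Q_\chi$. Consequently descent identifies $(\Pic(V^{ss}_{\chi/U})^{T}/\Pic(U))_\bQ$ with $\Pic(Q_\chi/U)_\bQ$, so that $q_\chi^{*}$ is an isomorphism after tensoring with $\bQ$; the same descent yields, for every $y\in\chi(T)$, the relation $q_\chi^{*}L_{(y)}\sim_\bQ \cO_{y}|_{V^{ss}_{\chi/U}}$.

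To obtain that $\alpha$ is an isomorphism I would compare the two maps. Under $Q_\chi\simeq X$ the character correspondence \pref{eq:character_cooresponding} reads as an isomorphism $\Psi\colon\chi(T)_\bQ\xrightarrow{\sim}\Pic(Q_\chi/U)_\bQ$, $y\mapsto[L_{(y)}]$. Since $\alpha(y)=[\cO_{y}]$ and the restriction isomorphism $(1)$ in \pref{eq:character_pic} sends $[\cO_{y}]$ to $[\cO_{y}|_{V^{ss}_{\chi/U}}]$, the relation of the previous paragraph gives
\begin{align*}
(1)\circ\alpha \;=\; q_\chi^{*}\circ\Psi \colon \chi(T)_\bQ \longrightarrow (\Pic(V^{ss}_{\chi/U})^{T}/\Pic(U))_\bQ .
\end{align*}
As $(1)$, $\Psi$ and $q_\chi^{*}$ are all isomorphisms, so is $\alpha$. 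This completes the verification of condition $(3)$.

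The step I expect to be the main obstacle is precisely the descent for $q_\chi$: one must pass from the $T$-invariant relative Picard group of the semistable locus to the relative Picard group of the quotient, and the fact that the stabilizers are only finite---rather than trivial---is what forces the passage to $\bQ$-coefficients and requires \pref{rm:isotropy_finiteness}. Alternatively, one could prove $\alpha$ directly: injectivity follows by comparing the $T$-linearizations on the fibres of $\pi$, and surjectivity from the triviality of the relative Picard group of the Cox total space $V$; the commuting relation above would then give $q_\chi^{*}$. Once condition $(3)$ is established, \pref{th:geom_quot_mdm} applies and shows that $\pi_\chi\colon Q_\chi\to U$ is an MDM; since $Q_\chi\simeq X$ over $U$, we conclude that $\pi_X\colon X\to U$ is an MDM.
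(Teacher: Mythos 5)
Your proposal is correct and its core is the same as the paper's: conditions (1) and (2) are read off from \pref{pr:MDM_is_the_quotient} together with $Q_\chi \simeq X$, and the isomorphy of $q_\chi^*$ in condition (3) comes from descent of $T$-linearized line bundles along the geometric quotient, using the finite isotropy groups of \pref{rm:isotropy_finiteness} (this is exactly the paper's appeal to Kempf's descent lemma, with finiteness of stabilizers forcing $\bQ$-coefficients) and the bound $\codim(V\setminus V^{s}_{\chi/U})\ge 2$ to pass from the semistable locus back to $V$.

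Where you genuinely diverge is the last step, the isomorphy of $\alpha$. The paper does this more cheaply: $\alpha$ is obviously injective, and since $\dim_\bQ \chi(T)_\bQ = \dim_\bQ \Pic(X/U)_\bQ$ by the very construction of $T$, injectivity together with the already-established isomorphisms $(1)$ and $q_\chi^*$ forces bijectivity by a dimension count. Your route instead factors $(1)\circ\alpha = q_\chi^*\circ\Psi$ through the character--bundle correspondence \pref{eq:character_cooresponding}. This is also valid, but it carries an extra proof obligation that your write-up asserts rather than proves: Kempf descent only produces \emph{some} line bundle on $Q_\chi$ whose pullback is $\cO_{Ny}|_{V^{ss}_{\chi/U}}$; identifying that descended bundle with $L_{(y)}^{\otimes N}$ under $Q_\chi\simeq X$ --- i.e., the relation $q_\chi^{*}L_{(y)}\sim_\bQ \cO_{y}|_{V^{ss}_{\chi/U}}$ --- is an additional step. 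It can be discharged, for instance by verifying it for characters corresponding to $\pi_X$-ample bundles via the $\Proj$ description in the proof of \pref{pr:MDM_is_the_quotient} and then extending by linearity (ample classes span $\chi(T)_\bQ$ since the relative ample cone is full-dimensional), or by comparing invariant pushforwards using \pref{eq:identification1}-type identities; but as written this is the one soft spot. What the paper's dimension count buys is precisely the ability to skip this identification; what your route buys is the explicit compatibility of $\alpha$, $q_\chi^*$, and the Cox construction, which is conceptually clarifying and is in any case implicitly needed later in the proof of \pref{th:geom_quot_mdm}.
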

\begin{proof}
The condition (1) and (2) follow from \pref{pr:MDM_is_the_quotient}.
Let us check (3).
Note that there exists the natural isomorphism
$\Pic(V^s_{\chi /U})^{T}_{\bQ} \xleftarrow[q_{\chi} ^*]{\sim} \Pic(X)_{\bQ}$ by the Kempf descent Lemma \cite[Theorem 2.3]{MR999313}, \pref{rm:isotropy_finiteness}, and \pref{pr:MDM_is_the_quotient}.
Since $q_{\chi}$ is a morphism over $U$ and 
$\codim(V \backslash V^s _{\chi /U}) \ge 2 $,
this induces the isomorphism $q_{\chi} ^*$ in \pref{eq:character_pic}.
Moreover,
$\alpha$ is obviously injective and 
$\dim_{\bQ} \chi(T)_{\bQ} =\dim_{\bQ} (\Pic(X/U)_{\bQ})$
by the construction.
Hence $\alpha$ is also an isomorphism.
\end{proof}

\section{On conservation of Mori dreamness and some examples}
In this section we give various examples of MDMs and study the categorical properties of MDMs, in particular those concerning compositions and base changes.

\begin{example}\label{ex:Fano_MDM}
Let $\pi \colon X \to U$ be a projective morphism of normal quasi-projective varieties.
Suppose that $X$ is $\bQ$-factorial and $K_X + \Delta$ is Kawamata log terminal for a $\bQ$-divisor $\Delta$ on $X$.
If $-(K_X + \Delta)$ is ample over $U$, then $\pi$ is an MDM.
In fact, Birkar, Cascini, Hacon, and Mckernan proved under these assumptions the finite generation of Cox sheaves (see \cite[Corollary 1.1.9]{MR2601039} and also \cite[Corollary 1.3.2]{MR2601039}).
The isomorphism $\N^1(X/U)_{\bQ} \simeq \Pic(X/U)_{\bQ}$ follows from the assumption and the base point free theorem applied to $\pi$.
Hence the morphism $\pi$ is an MDM in the sense of this paper by \pref{th:int_mdm_cox}.
\end{example}

Below is an example of a pair of morphisms which are MDMs but their composition is not.
Recall that an MDM over a point is nothing but a MDS.

\begin{example}\label{eg:blow-up_MDM}
Let $U$ be a smooth variety and 
$\pi \colon X \to U $ be a blow-up at a point $u \in U$.
We can see that 
$\Pic(X/U) = \bQ [E] \simeq N^1 (X/U) $
and
$\Nef(X/U) = \Mov(X/U) = \bQ_{\le 0} [E]$, where $E$ is the exceptional divisor.
This implies that $\pi$ is an MDM (we can also deduce this from \pref{ex:Fano_MDM}).
On the other hand, it is known that the blow-up of $\bP^2$ in very general nine points is not a Mori dream space, although the blow-up in eight points is always an MDS.
\end{example}

\begin{example}
For any $\bQ$-factorial variety $X$ and vector bundle $F$ on $X$,
we can check that the projective bundle
$\pi \colon \bP_X (F) \to X$
is an MDM.
However, \cite[Theorem 1.1]{MR2968631} implies that there exist a smooth toric variety $X$ and a vector bundle $F$ on $X$ such that $\bP_X (F)$ is not an MDS.
\end{example}

On the other hand, 
\pref{pr:betweeen_MDS} and \pref{pr:surj_MDM} below imply that if the composition of two algebraic fibre spaces is an MDM, both of them are also MDMs.

\begin{proposition}\label{pr:betweeen_MDS}
Let $X$ be an MDM over $U$
and
$f : X \to Y$ be an algebraic fibre space over $U$ as in the following commutative diagram.
\begin{align*}
 \xymatrix@ul{
   & {U}  & X\ar[l]_{\pi_1} \ar[dl]^{f} \\
   & Y  \ar[u]^{\pi_2}
  }
\end{align*}
Then $f$ is an MDM.
\end{proposition}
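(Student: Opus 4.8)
The plan is to verify the four conditions of \pref{df:MDM} for the morphism $f\colon X\to Y$, using the MDM structure of $\pi_1=\pi_2\circ f$ on $X$ over $U$. Throughout I write $q\colon \N^1(X/U)_{\bQ}\to\N^1(X/Y)_{\bQ}$ for the natural surjection; its kernel is $W\coloneqq \NE(f)^{\perp}=f^*\N^1(Y/U)_{\bQ}$, the second equality holding by the injectivity of $f^*$ (see \pref{cr:inj}) together with a dimension count, and by \pref{rm:pullback_face} the cone $F\coloneqq f^*\Nef(Y/U)=\Nef(X/U)\cap W$ is a face of $\Nef(X/U)$ which, by \pref{pr:fan_structure}~(1), spans $W$. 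Condition (1) is immediate since $X$ is $\bQ$-factorial.

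The technical engine will be a relative positivity comparison: if $H$ is a $\pi_2$-ample divisor on $Y$, then for every $f$-nef (respectively $f$-movable) divisor $D$ the divisor $D+f^*(mH)$ is $\pi_1$-nef (resp. $\pi_1$-movable) for $m\gg0$. For the nef case I would argue on the finitely many extremal rays of $\overline{\NE}(X/U)$, which is rational polyhedral as the dual of the polyhedral cone $\Nef(X/U)$: on a ray $[C]$ contracted by $f$ the class $f^*H$ contributes nothing while $D.[C]\ge 0$ already, and on a ray with $f_*[C]\neq0$ the intersection $H.f_*[C]$ is strictly positive, so a single large $m$ works simultaneously. The movable case follows by comparing stable base loci, using $\B(D/Y)\subseteq\B(D/U)$ and \pref{cr:base_movable}. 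Conversely $\pi_1$-nef (resp. $\pi_1$-movable) trivially implies $f$-nef (resp. $f$-movable).

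For condition (2) I will verify $(2')$ of \pref{pr:MDM2}: given an $f$-nef divisor $D$, the class $D+f^*(mH)$ is $\pi_1$-nef, hence $\pi_1$-semiample since $\pi_1$ is an MDM; feeding this through \pref{pr:rsemiample} applied to $\pi_1$ yields that $kD+f^*(kmH+\pi_2^*A')$ is semiample over $\bC$ for some ample $A'$ on $U$ and $k>0$, and since $kmH+\pi_2^*A'$ is ample on $Y$ the same proposition, read for $f$, shows that $D$ is $f$-semiample. For condition (3) it then suffices, again by \pref{pr:MDM2}, to see that $\Nef(X/Y)$ is rational polyhedral: its preimage $q^{-1}(\Nef(X/Y))$ is the dual of $\NE(f)=\overline{\NE}(X/U)\cap\ker(f_*)$, which is polyhedral as the intersection of a polyhedral cone with a linear subspace, so $\Nef(X/Y)=q(\Nef(X/U))$ is polyhedral (the comparison lemma supplying the equality), and its generators are $f$-nef, hence $f$-semiample by $(2')$.

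The heart of the argument is condition (4), which I expect to be the \emph{main obstacle}. Every SQM of $X$ over $Y$ is in particular an SQM over $U$, so it is one of the maps $g_i\colon X\dashrightarrow X_i$ from \pref{df:MDM}~(4); by \pref{pr:fan_structure}~(2) applied to the morphism $f$, the map $f\circ g_i^{-1}$ is regular exactly when $F\subseteq \sigma_i\coloneqq g_i^*\Nef(X_i/U)$, and I set $S\coloneqq\{i : F\subseteq\sigma_i\}$. Each $X_i$ is again an MDM over $U$ (being an SQM of one), so the previous paragraphs show that $f_i\colon X_i\to Y$ satisfies (1),(2),(3); moreover $g_i^*(\Nef(X_i/Y))=q(\sigma_i)$ for $i\in S$, since the quotient maps and the identifications $g_i^*$ of \pref{rm:SQM} are compatible and $\Nef(X_i/Y)=q_i(\Nef(X_i/U))$ (with $q_i$ the analogous quotient for $X_i$) by the comparison lemma. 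Since $q(\Mov(X/U))=\Mov(X/Y)$ and $\Mov(X/U)=\bigcup_i\sigma_i$ by \pref{th:mov_cont}, it remains to prove $\bigcup_{i\in S}q(\sigma_i)=\Mov(X/Y)$, i.e. that the images of the cones containing $F$ already cover. The idea is that, for $\bar D\in\Mov(X/Y)$ with $f$-movable representative $D$, the ray directions of $D+tf^*H$ tend to $f^*H\in\relint F$ as $t\to\infty$, so for $t\gg0$ the point $D+tf^*H$ lies in the star $\bigcup_{i\in S}\sigma_i$ of the face $F$ in the fan $\cM_{X/U}$; applying $q$, which kills $f^*H$, gives $\bar D\in q(\sigma_i)$ for some $i\in S$, the reverse inclusion being clear. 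The delicate points to nail down are that the star of $F$ is genuinely a neighborhood of $\relint F$ inside $\Mov(X/U)$, and that $g_i$ really qualifies as an SQM over $Y$ (that is, $f_i$ is an algebraic fibre space) for each $i\in S$; granting these, condition (4) holds and $f$ is an MDM.
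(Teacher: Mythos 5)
Your proposal is correct and follows essentially the same route as the paper's own proof: you verify $(2')$ and $(3')$ of \pref{pr:MDM2} by the extremal-ray argument showing $D+mf^*H$ becomes $\pi_1$-nef and by transporting semiampleness via \pref{pr:rsemiample}, and you establish condition $(4)$ by restricting to the SQMs whose nef cones contain $f^*\Nef(Y/U)$ and pushing $D+mf^*H$ into the star of that face for $m\gg 0$. The two ``delicate points'' you flag (the star being a neighborhood of the relative interior of the face, and $f\circ g_i^{-1}$ being an algebraic fibre space) are treated at the same level of detail in the paper itself, so there is no essential gap relative to it.
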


\begin{proof}
Let $D \in \Pic(X)$ be an $f$-nef divisor on $X$.
Take a $\pi_2$-ample divisor $A$ on $Y$.
For sufficiently large $m \gg 0$, the divisor $D + m f^*A$ is $\pi_1$-nef since 
$\NE_1 (X/U)$ is a finitely generated cone and the generators of 
$\NE_1 (X/U)$ have non-negative intersection numbers with $D + m f^*A$.
Hence $D + m f^*A$ is $\pi_1$-semiample, in particular it is $f$-semiample by \pref{pr:rsemiample}.
Note that the above argument also implies that any $f$-nef class in $\N^1(X/Y)_{\bQ}$ comes from nef class in 
$\N^1 (X/U)_{\bQ}$, 
so that $\Nef(X/Y)$ is a polyhedral cone since $\Nef(X/U)$ is.
These arguments show that $f$ satisfies the conditions $(2')$ and $(3')$ in \pref{pr:MDM2}.

We prove that $f$ satisfies the condition $(4)$ in \pref{df:MDM}.
Let $\sigma \coloneqq f^*(\Nef(Y/U))$ be the cone corresponding to the map $f$
and 
$ \{ g_i \}_{i=1}^{r'} $ be the subset of the all SQMs of $X$ such that
$\sigma$ is a face of each $g_i^* (\Nef(X_i / U))$.
For each $i=1, \dots r'$, let $f_i$ be the morphism corresponding to $\sigma \prec \Nef(X_i/U)$.
There exists the following commutative diagram over $U$.
\begin{align*}
 \xymatrix@ul{
   & {Y}  & X\ar[l]_{f} \ar@{-->}[dl]^{g_i} \\
   & X_i  \ar[u]^{f_i}
  }
\end{align*}

It is sufficient to prove
\begin{align*}
\Mov(X/Y) = \bigcup_{i=1}^{r'} g_i ^ * (\Nef(X_i/Y)).
\end{align*}

For this, it is sufficient to show that the natural map 
\begin{align}\label{eq:mov}
\bigcup_{i=1}^{r'} (\Nef(X_i /U)) \to \Mov(X/Y) 
\end{align}
is surjective
since we know that $\Nef(X_i/U) \to \Nef(X_i / Y)$ is surjective.
Take any $[D] \in \Mov( X / Y )$. 
By applying \pref{pr:rsemiample}, we may assume that $D \in \Mov(X/U)$.
Take a $\pi_2$-ample divisor $A$ on $Y$.
It follows that $f^*(A) \in \relint ( \sigma)$.
We set $R  \coloneqq \bQ_{\ge 0} f^*( A) \subset \Mov (X/U)$.
Then we can take a cone $V$ in $\Mov(X)$
which is open in $\Mov(X)$,
contains $R$,
and satisfies
\begin{align}\label{eq:open_set}
V \subset \bigcup_{i=1}^{r'} g_i^* (\Nef(X_i / U ))
\end{align}
since each $\Nef(X_i / U)$ is full dimensional and 
$\{ \Nef(X_i / U) \}_{i=1}^{r'}$ is the set of all the full dimensional cones in 
$\cM_X$ containing 
$\sigma$.
On the other hand, if we take sufficiently large $m \gg 0$, the divisor $D + m f^* (A)$ get closer to $R$, so that $D + m f^* (A) \in V$ holds.
Hence \pref{eq:open_set} implies 
\begin{align*}
D + m f^* (A) \in \bigcup_{i=1}^{r'} g_i^* (\Nef(X_i / U)).
\end{align*}
This shows that the map \pref{eq:mov} is surjective. 
\end{proof}

\begin{remark}
From the above proof we deduce that the fan $\cM_{X/Y}$ coincides with the fan $\operatorname{Star} (\sigma)$ associated to
$\sigma = f^*(\Nef(Y/U)) \in \cM_{X/U}$, where $\operatorname{Star} (\sigma)$ is the star of $\sigma \in \cM_{X/Y}$ defined as in \cite[Section 3.2]{MR2810322}.
\end{remark}

\begin{corollary}
Let $\pi \colon X \to U$ be an MDM.
\begin{enumerate}
\item
The map $\alpha$ in \pref{th:mov_cont} (see \pref{eq:map_alpha})
induces the bijection between the set of all faces of $\Nef(X/U)$ and the set of MDMs $f \colon X \to Y$ over $U$.

\item
Let $\sigma_i$ $(i=1,2)$ be two faces of $\Nef(X/U)$ and 
$f_{i} \colon X \to Y_{i}$ be the corresponding MDMs.
Then
$\sigma_2 \prec \sigma_1$ holds
if and only if
there exists an algebraic fibre space $h \colon Y_{1} \to Y_{2}$ making the following diagram commutative.
\begin{align}\label{eq:diagram_88}
\xymatrix{

X \ar[r]^{f_{1}} \ar[d]_{f_{2}} & Y_{1} \ar[ld]_{h} \ar[d]^{\pi_1} \\
Y_{2}  \ar[r]_{\pi_2} & U
}
\end{align}
\end{enumerate}
\end{corollary}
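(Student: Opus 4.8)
The plan is to obtain (1) by restricting the bijection $\alpha$ of \pref{th:mov_cont} to the faces of $\Nef(X/U)$ and combining it with \pref{pr:betweeen_MDS}, and to prove (2) by translating the face relation $\sigma_2 \prec \sigma_1$ into an inclusion of contracted curves and then factoring $f_2$ through $f_1$. For (1), I would first note that $\Nef(X/U)$ is the maximal cone of the fan $\cM_{X/U}$ coming from $g_i = \id_X$, so that every face of $\Nef(X/U)$ lies in $\cM_{X/U}$ and is sent by $\alpha$ to a rational contraction. The key is to identify which contractions occur: by \pref{pr:fan_structure} (2) applied with $X_i = X$ and $g_i = \id_X$, a cone $\sigma \in \cM_{X/U}$ satisfies $\sigma \subseteq \Nef(X/U)$—equivalently, is a face of $\Nef(X/U)$, since $\cM_{X/U}$ is a fan—if and only if its associated rational contraction is a regular morphism $f \colon X \to Y$ over $U$. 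Such an $f$ is an algebraic fibre space (a regular rational contraction satisfies $f_*\cO_X = \cO_Y$ by \pref{df:cont}) and is therefore an MDM by \pref{pr:betweeen_MDS}; conversely, any MDM $f \colon X \to Y$ over $U$ is an algebraic fibre space, so $\alpha(f) = f^*(\Nef(Y/U))$ is a face of $\Nef(X/U)$ by \pref{rm:pullback_face}. This shows that $\alpha$ restricts to the asserted bijection.

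For the ``if'' direction of (2), I would use that $\sigma_i = \Nef(X/U) \cap \NE(f_i)^{\perp}$ is a face of $\Nef(X/U)$ for $i=1,2$ by \pref{rm:pullback_face}. If $f_2 = h \circ f_1$, then any curve $C$ contracted by $f_1$ satisfies $f_2(C) = h(f_1(C)) = \mathrm{pt}$, so $\NE(f_1) \subseteq \NE(f_2)$; taking orthogonal complements and intersecting with $\Nef(X/U)$ gives
\begin{align*}
\sigma_2 = \Nef(X/U) \cap \NE(f_2)^{\perp} \subseteq \Nef(X/U) \cap \NE(f_1)^{\perp} = \sigma_1.
\end{align*}
Since $\sigma_1$ and $\sigma_2$ are both faces of $\Nef(X/U)$ with $\sigma_2 \subseteq \sigma_1$, the cone $\sigma_2$ is automatically a face of $\sigma_1$, i.e. $\sigma_2 \prec \sigma_1$.

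The ``only if'' direction is where the main work lies. Assuming $\sigma_2 \prec \sigma_1$, so $\sigma_2 \subseteq \sigma_1$, I would fix a curve $C$ contracted by $f_1$ and a $\pi_2$-ample divisor $A$ on $Y_2$; then $f_2^*A$ lies in $\sigma_2 \subseteq \sigma_1 = \Nef(X/U) \cap \NE(f_1)^{\perp}$, so $f_2^*A \cdot C = 0$, and the projection formula gives $A \cdot f_{2*}C = 0$, forcing $C$ to be contracted by $f_2$. Thus every curve contracted by $f_1$ is contracted by $f_2$. The step I expect to be the main obstacle is upgrading this numerical statement to an actual factorization: since $f_1 \colon X \to Y_1$ is an algebraic fibre space and its fibres are connected, they are contracted by $f_2$, so the rigidity lemma yields a morphism $h \colon Y_1 \to Y_2$ with $f_2 = h \circ f_1$. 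Finally $h$ is an algebraic fibre space, because
\begin{align*}
h_* \cO_{Y_1} = h_*(f_{1*}\cO_X) = f_{2*}\cO_X = \cO_{Y_2},
\end{align*}
and it is projective as $Y_1, Y_2$ are projective over $U$; this produces the commutative diagram \pref{eq:diagram_88} and completes the argument.
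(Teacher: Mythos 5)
Your proof is correct and takes essentially the same approach as the paper: part (1) is obtained from \pref{th:mov_cont} together with \pref{pr:betweeen_MDS}, and part (2) rests on the same intersection-number computation (via \pref{rm:pullback_face}) showing that every curve contracted by $f_1$ is contracted by $f_2$, the ``if'' direction likewise reducing to an inclusion of faces of $\Nef(X/U)$. The only difference is that you make explicit the rigidity-lemma step producing the factorization $h$ and the verification that $h$ is an algebraic fibre space, which the paper leaves implicit behind ``it is sufficient to show''; this fills in a detail rather than changing the argument.
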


\begin{proof}
\pref{th:mov_cont} and \pref{pr:betweeen_MDS} immediately imply $(1)$.
Let us prove $(2)$.
If there is an algebraic fibre space $h$ making the commutative diagram \pref{eq:diagram_88}, we obtain 
$
\sigma_2 = f_2 ^* \Nef (Y_2 /U) \subset  f_1 ^* \Nef (Y_1 /U) = \sigma_1
$.
Since $\sigma_1$ and $\sigma_2$ are faces of the cone $\Nef(X/U)$,
it follows that $\sigma_2  \prec \sigma_1$.
To show the converse, assume $\sigma_2 \prec \sigma_1$.
It is sufficient to show that any curve $C$ contracted by $f_1$ is also contracted by $f_2$.
Take a $\pi_2$-ample divisor $A$ on $Y_2$.
Then there exists a $\pi_1$-nef divisor $B$ on $Y_1$ such that
$f_1^* (B) = f_2^* (A)$ holds in $\N ^1 (X/U)_{\bQ}$, since $\sigma_2  \prec \sigma_1$.
Then we obtain
\begin{align}\label{eq:intnumber}
{f_2}_{*}(C).A = C. {f_2}^*A = C. {f_1}^{*} B = {f_1}_{*}(C).B =0.
\end{align}
Since $A$ is $\pi_2$-ample, we obtain from \pref{eq:intnumber} that $f_2(C)$ is a point.
\end{proof}

The following \pref{pr:surj_MDM} generalizes
\cite[Theorem 1.1]{MR3466868}.

\begin{proposition}\label{pr:surj_MDM}
Let $X$ be an MDM over $U$
and
$Y$ be a $\bQ$-factorial variety projective over $U$.
Suppose that $f$ is a surjective morphism from $X$ to $Y$ as in the following commutative diagram.
\begin{align*}
 \xymatrix@ul{
   & {U}  & X\ar[l]_{\pi_1} \ar[dl]^{f} \\
   & Y  \ar[u]^{\pi_2}
  }
\end{align*}
Then $\pi_2$ is also an MDM.
\end{proposition}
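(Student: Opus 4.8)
The plan is to verify that $\pi_2$ satisfies the hypotheses of \pref{th:int_mdm_cox}: since $Y$ is assumed $\bQ$-factorial, it suffices to establish the isomorphism $\Pic(Y/U)_{\bQ} \simeq \N^1(Y/U)_{\bQ}$ and the finite generation of the Cox sheaf of $Y$ over $U$. Throughout I compare divisors on $Y$ with their pullbacks to the MDM $X$, and I write the Stein factorization $f = h \circ g$ with $g \colon X \to Z$ an algebraic fibre space over $U$ and $h \colon Z \to Y$ finite; this separates the connected-fibre behaviour (governed by the projection formula) from the finite behaviour.

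First I would prove $\Pic(Y/U)_{\bQ} \simeq \N^1(Y/U)_{\bQ}$. The pullback $f^{*}\colon \Pic(Y/U)_{\bQ} \to \Pic(X/U)_{\bQ}$ is injective, since $g^{*}$ is injective by \pref{lm:known_inj} and $h^{*}$ is injective because $h$ is finite. The induced map on $\N^{1}$ is injective as well, as any curve on $Y$ contracted by $\pi_2$ lifts to a curve on $X$ contracted by $\pi_1$. Now if a divisor $D$ on $Y$ satisfies $D \equiv_U 0$, then $f^{*}D \equiv_U 0$, hence $f^{*}D \sim_{\bQ,U} 0$ because $X$ is an MDM, and injectivity of $f^{*}$ on $\Pic(Y/U)_{\bQ}$ forces $D \sim_{\bQ,U} 0$. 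This yields the desired isomorphism.

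For finite generation of the Cox sheaf, the key point is that the relative effective cone of $Y$ sits inside that of $X$ as a linear slice, $f^{*}(\Eff(Y/U)) = \Eff(X/U) \cap f^{*}(\N^{1}(Y/U)_{\bQ})$, the inclusion $\supseteq$ coming from the comparison of relative section modules on $X$ and $Y$ up to the coherent sheaf $f_{*}\cO_X$. Intersecting the finite Mori chamber decomposition of $\Eff(X/U)$ from \pref{cr:mori_chamber} with the rational subspace $f^{*}(\N^{1}(Y/U)_{\bQ})$ produces a finite decomposition of $\Eff(Y/U)$ into rational polyhedral chambers. On the chamber cut out by a birational contraction $g_i \colon X \dashrightarrow Y_i$, any pullback class $f^{*}P$ decomposes as $g_i^{*}A_P + E_P$ with $A_P$ semiample on the MDM $Y_i$ and $E_P$ fixed by \pref{pr:div_map}, so by \pref{lm:pull_back} the section algebra of $f^{*}P$ on $X$ agrees with that of the semiample divisor $A_P$ on $Y_i$, which is finitely generated by the relative Zariski lemma \pref{lm:zariski}. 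Summing over the finitely many chambers shows that the multigraded $\cO_U$-algebra $\bigoplus_{m}{\pi_1}_{*}(f^{*}\cL_Y^{m})$ is finitely generated.

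It remains to descend from this algebra on $X$ to the Cox sheaf $R(Y/U,\cL_Y) = \bigoplus_{m}{\pi_2}_{*}(\cL_Y^{m})$ itself, and here I expect the main obstacle. When $f$ is not an algebraic fibre space the two algebras genuinely differ, with ${\pi_1}_{*}(f^{*}\cL_Y^{m}) = {\pi_2}_{*}(\cL_Y^{m}\otimes f_{*}\cO_X)$; the Stein factorization isolates this discrepancy in the finite morphism $h$. Since $f_{*}\cO_X$ is coherent and hence a finite $\cO_Y$-module, the $X$-algebra is a finite module over the $Y$-algebra, so the Artin--Tate lemma over the Noetherian base $\cO_U$ upgrades finite generation of the former to that of the latter. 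The delicate points to get right are the uniform finite-module structure across all graded pieces and the precise verification that the slice $\Eff(X/U)\cap f^{*}(\N^{1}(Y/U)_{\bQ})$ is exactly $f^{*}(\Eff(Y/U))$; once these are settled, \pref{th:int_mdm_cox} gives that $\pi_2$ is an MDM.
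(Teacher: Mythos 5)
Your overall architecture---verify the hypotheses of \pref{th:int_mdm_cox} for $\pi_2$, i.e.\ prove $\Pic(Y/U)_{\bQ}\simeq \N^1(Y/U)_{\bQ}$ and finite generation of a Cox sheaf of $Y/U$---is the same as the paper's, and your first step (injectivity of $f^*$ via the Stein factorization, then $D\equiv_U 0\Rightarrow f^*D\equiv_U 0\Rightarrow f^*D\sim_{\bQ,U}0\Rightarrow D\sim_{\bQ,U}0$) coincides with the paper's argument. The genuine divergence is in the finite generation step. Writing $C=\bigoplus_{m}{\pi_1}_*(f^*\cL_Y^m)$ and $B=\bigoplus_m {\pi_2}_*\cL_Y^m$, the paper extends $f^*L_1,\dots,f^*L_r$ by further line bundles to a $\bQ$-basis of $\Pic(X/U)_{\bQ}$, so that $C$ is realized as the invariant ring $R^T$ of a torus acting on a Cox sheaf $R$ of the MDM $X$ through the complementary grading; finite generation of $C$ then follows from \pref{pr:Cox_f_g} and reductivity, and the passage from $C$ to $B$ in the finite case is delegated to the argument of \cite[Section 3.3]{MR3466868}. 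You instead prove finite generation of $C$ directly, by slicing the chamber decomposition of \pref{cr:mori_chamber} with $f^*\N^1(Y/U)_{\bQ}$ and re-running the proof of \pref{pr:Cox_f_g} on each slice via \pref{pr:div_map}, \pref{lm:pull_back} and \pref{lm:zariski}; that part is sound, and the fibre-space case of your descent ($f_*\cO_X=\cO_Y$, so $C=B$) is the same as the paper's.

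The gap is precisely at the point you flag. The inference ``$f_*\cO_X$ is coherent, hence a finite $\cO_Y$-module, hence $C$ is a finite $B$-module'' is a non sequitur: finiteness of $f_*\cO_X$ at the sheaf level does not by itself produce finitely many $B$-module generators for the multigraded algebra $C$, and for a general coherent sheaf in place of $f_*\cO_X$ (with $\cL_Y$ spanning all of $\Pic(Y/U)_{\bQ}$, so no ampleness is available) no such statement holds---the algebra structure of $f_*\cO_X$ must actually enter. The correct route is: write $f=h\circ g$ with $h\colon Z\to Y$ finite, so that $C=\bigoplus_m H^0$-sections of $h^*\cL_Y^m$ on $Z$ (locally over $U$); then every homogeneous element $c$ of $C_m$ is \emph{integral} over $B$, because the coefficients of its minimal polynomial over $\bC(Y)$ are rational sections of $\cL_Y^{im}$ that are regular at every codimension-one point of the normal variety $Y$ (localize at a DVR, where integrality of $c$ over the local ring forces the coefficients to be regular), hence regular. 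Combining this integrality with the finite generation of $C$ as an $\cO_U$-algebra, which you have already established, one gets $C=B[c_1,\dots,c_N]$ for finitely many homogeneous integral elements, so $C$ is a finite $B$-module, and only then does Artin--Tate apply. The same norm-type argument (pushforward of divisors under $h$, $h_*h^*P=(\deg h)P$) is what is really needed for the inclusion $\Eff(X/U)\cap f^*\N^1(Y/U)_{\bQ}\subseteq f^*\Eff(Y/U)$ in your cone equality; the ``comparison of section modules up to $f_*\cO_X$'' you invoke there has the same defect, although for the finite generation of $C$ itself only the easy inclusion is actually used. Once the integrality argument is supplied, your proof closes and is a legitimate alternative to the paper's invariant-theoretic one.
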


\begin{proof}
We may assume that $f$ is either an algebraic fibre space or a finite morphism by the Stein factorization.
In each case, there is a natural injection 
$f^* \colon \Pic(Y)_{\bQ} \hookrightarrow \Pic(X)_{\bQ}$ or $f^* \colon \Pic(Y/U)_{\bQ} \hookrightarrow \Pic(X/U)_{\bQ} $.
First, we show that $\Pic(Y/U)_{\bQ} \simeq \N^1(Y/U)_{\bQ}$.
For this, take a divisor $D$ on $Y$ such that $D \equiv_{U} 0$.
Then there exists $A \in \Pic(U)_{\bQ}$ 
such that $f^* D \sim_{\bQ} \pi_1 ^* (A) = f^* (\pi_2^* (A))$
since we know that $\Pic(X/U)_{\bQ} \simeq \N^1(X/U)_{\bQ}$ holds by the assumption that $\pi_1$ is an MDM.
This implies that $\pi_2^* (A) \sim_{\bQ} D$, and hence we have
$\Pic(Y/U)_{\bQ} \simeq \N^1(Y/U)_{\bQ}$.

To conclude the proof, it is sufficient to show that a Cox sheaf of $Y/U$ is finitely generated by 
\pref{cr:quotient_satisfies_}.
Let $L_1, L_2, \dots, L_{r} \in \Pic(Y)$ be the line bundles
such that $\{ [L_i] \}_i$ is a $\bZ$-basis of the free part of $\Pic(Y/U)$. We write the free abelian group generated by $L_i$ $(i=1, \dots, r )$ as $\Gamma_Y$.
Take line bundles $M_1 \dots M_{r'}$ such that
$\{[f^* L_1], \dots, [f^*L_{r}], [M_1] \dots[ M_{r'}] \}$ 
is a basis of $\Pic(Y/U)_{\bQ}$.
Let $M$ be the free abelian group generated by $\{ M_j \}$ and
$\Gamma_X \coloneqq f^* \Gamma_Y \oplus M$ be the direct sum of the abelian groups.
Let $T$ be a torus whose character group is isomorphic to $M$, and consider the natural action 
of $T$ on $R \coloneqq \bigoplus_{\cL \in \Gamma_X} {\pi_1} _* \cL$ which corresponds to the grading with respect to $M$.
Then it follows that $R^T = \bigoplus_{\cL \in f^* \Gamma_Y} {\pi_1}_* \cL$,
so that it is finitely generated over $U$ since $R$ is finitely generated by the assumption that
$\pi_1$ is an MDM (see \pref{pr:Cox_f_g}).

If $f$ is an algebraic fibre space, it is obvious that
\begin{align}\label{eq:wanted_f_g}
R^T \simeq \bigoplus_{\cL \in \Gamma_Y} {\pi_2}_* \cL,
\end{align}
and this concludes the proof.
In the case where $f$ is finite,
the finite generation of $R^T$ is equivalent to that of $\bigoplus_{\cL \in \Gamma_Y} {\pi_2}_* \cL$ by the same argument in Section 3.3 in \cite{MR3466868}.
\end{proof}

%

Next, we consider base changes of MDMs.

\begin{theorem}\label{th:base_change}
Let 
$f \colon X \to U $
 be an MDM and
$g \colon T \to U$ 
be a morphism between quasi-projective varieties.
Let $W \coloneqq X \times_{U}  T$ be the fibre product as in the following diagram.
\begin{align}\label{eq:flat_diagram}
\xymatrix{
 W \ar[r]^{p} \ar[d]_{q} & X \ar[d]^{f} \\
 T \ar[r]^{g} & U
}
\end{align}
In the diagram, $p$ and $q$ denote the natural projections.
Assume the following three conditions:
\begin{enumerate}
\item
$W$ is normal and $\bQ$-factorial.

\item
The natural map $p^* \colon \Pic(X/U) \to \Pic(W/T)$ is surjective.


\item
$\N ^1(W/T)_{\bQ} \simeq \Pic(W/T)_{\bQ}$ and the natural map $g^* f_* L \to q_* p^* L$ is surjective for any line bundle $L$ on $X$.
\end{enumerate}
Then $q$ is an MDM.
\end{theorem}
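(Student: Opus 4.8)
The plan is to deduce the statement from the Cox-sheaf characterisation \pref{th:int_mdm_cox}, by showing that the finite generation of a Cox sheaf of $f$ propagates to $q$ under the stated hypotheses. First I would verify the remaining assumptions of \pref{th:int_mdm_cox} for $q$. Since $f$ is projective and $q$ is its base change along $g$, the morphism $q$ is projective; as $T$ is quasi-projective, so is $W$, and together with condition $(1)$ this makes $W$ a normal $\bQ$-factorial quasi-projective variety. Applying condition $(3)$ to $L=\cO_X$ yields a surjection $\cO_T = g^* f_* \cO_X \to q_* p^* \cO_X = q_* \cO_W$; because $q$ is surjective (being the base change of the surjective $f$) the unit $\cO_T \to q_* \cO_W$ is injective, hence an isomorphism, so $q$ is an algebraic fibre space and $W$ is irreducible. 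The isomorphism $\N^1(W/T)_{\bQ} \simeq \Pic(W/T)_{\bQ}$ is part of $(3)$. Thus everything reduces to proving that a Cox sheaf of $W/T$ is a finitely generated $\cO_T$-algebra.

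Next I would set up and compare the two section algebras. Since $p^* \colon \Pic(X/U) \to \Pic(W/T)$ is surjective by $(2)$, the induced surjection on free parts lets me choose line bundles $L_1, \dots, L_s$ on $X$ whose pullbacks $p^* L_1, \dots, p^* L_s$ form a $\bZ$-basis of $\Pic(W/T)_{\text{free}}$; I then extend them to a basis $L_1, \dots, L_r$ of $\Pic(X/U)_{\text{free}}$ and set $\cL = (L_1, \dots, L_r)$. As $f$ is an MDM, \pref{pr:Cox_f_g} shows that $R(X/U, \cL) = \bigoplus_{m \in \bZ^r} f_* \cL^m$ is finitely generated over $\cO_U$, so its pullback $g^* R(X/U, \cL) = \bigoplus_{m} g^* f_* \cL^m$ is finitely generated over $\cO_T$. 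The base-change natural transformation supplies maps $g^* f_* \cL^m \to q_* p^* \cL^m$ which are surjective for every $m$ by $(3)$, and, being lax monoidal, they assemble into a surjective homomorphism of graded $\cO_T$-algebras
\begin{align*}
g^* R(X/U, \cL) \to \bigoplus_{m \in \bZ^r} q_* p^* \cL^m = R(W/T, p^* \cL).
\end{align*}
Consequently $R(W/T, p^* \cL)$ is a finitely generated $\cO_T$-algebra.

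Finally I would extract the honest Cox sheaf of $W/T$ from $R(W/T, p^* \cL)$. By the choice of the $L_i$, the Cox sheaf $\bigoplus_{n \in \bZ^s} q_* p^*(L_1^{n_1} \cdots L_s^{n_s})$ is precisely the graded piece of $R(W/T, p^* \cL)$ on which the last $r-s$ coordinates of the $\bZ^r$-grading vanish; that is, it is the invariant ring of $R(W/T, p^* \cL)$ under the subtorus $H \simeq (\bC^*)^{r-s}$ acting through the projection $\bZ^r \to \bZ^{r-s}$ onto those coordinates. Since $H$ is reductive, the relative Nagata theorem (used already in \pref{rm:finiteness}) shows that this invariant algebra is finitely generated over $\cO_T$, so a Cox sheaf of $W/T$ is finitely generated and \pref{th:int_mdm_cox} gives that $q$ is an MDM. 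I expect the main obstacle to be the middle step: checking that the base-change maps of $(3)$ are genuinely compatible with the multiplicative structures, so that degreewise surjectivity promotes to a surjection of $\cO_T$-algebras; granting this, finite generation is inherited both by the quotient and by the torus invariants.
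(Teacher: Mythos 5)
Your proposal is correct and follows essentially the same route as the paper's own proof: both pull back the finitely generated Cox sheaf of $X/U$, use condition $(3)$ to get a surjection of graded $\cO_T$-algebras onto $\bigoplus_{m}q_*p^*\cL^m$, extract the genuine Cox sheaf of $W/T$ as the invariants of a torus acting through the complementary grading (finitely generated by reductivity/Nagata), and conclude via the Cox-sheaf characterization of MDMs. The only cosmetic differences are your cleaner verification that $q$ is an algebraic fibre space (via $L=\cO_X$ in condition $(3)$, versus the paper's connected-fibres/Stein-factorization argument) and your explicit flagging of the multiplicativity of the base-change maps, which the paper leaves implicit.
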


\begin{proof}
Since $f$ is an algebraic fibre space, we can easily check that each fibre of $q$ is connected.
Hence $q$ is also an algebraic fibre space by considering the Stein factorization of $q$ since $T$ is normal and $q$ is projective.

Let $\cL \coloneqq (L_1, \dots, L_r)$ be a collection of line bundles such that 
$([L_1], \dots, [L_r])$ is a basis of $\Pic(X/U)_{\bQ}$.
Then a Cox sheaf of $X/U$ is $\bigoplus_{m \in \bZ^r} {f}_ * (\cL ^m)$.
By the assumption $(3)$, we obtain the surjection
\begin{align}\label{eq:cox_base_change}
 g^* (\bigoplus_{m \in \bZ^r} {f}_ * (\cL ^m)) \to  \bigoplus_{m \in \bZ^r} {q}_ * ( p^* \cL ^m).
 \end{align}
Note that $(p^* L_1, \dots, p^* L_r)$ generates $\Pic(W/T)_{\bQ}$ since $p^*$ is surjective,
so that
we may assume that
$
\cL ' \coloneqq ( p^* {L_1}, \dots, p^* L_{r^{'}})
$
is a basis of $\Pic(W/T)_{\bQ}$
for some $1 \le r' \le r$.
Let us consider the action of the torus
$
\cT \coloneqq \Hom (\bZ^{r-r'}, \bC ^ *)
$
on 
$
\bigoplus_{m \in \bZ^r} {q}_ * ( p^* \cL ^m)
$
which corresponds the grading with respect to $(p^* L_{r'+1}, \dots, p^* L_{r})$.
Then we obtain
\begin{align}\label{eq:inv_eq}
( \bigoplus_{m \in \bZ^r} ({q}_ * p^* \cL ^m ) ) ^ {\cT}
=
\bigoplus_{m \in \bZ ^ {r'}}  q_{*} {\cL'} ^ m .
\end{align}

By the assumption that $f$ is an MDM and the surjection \pref{eq:cox_base_change},
the $\cO_T$-algebra
$
\bigoplus_{m \in \bZ^r} {q}_ * ( p^* \cL ^m)
$
is finitely generated, so that
its $\cT$-invariant sheaf is also finitely generated.
By \pref{eq:inv_eq}, we obtain that $\bigoplus_{m \in \bZ ^ {r'}}  q_{*} {\cL'} ^ m$ is finitely generated over $T$.
Therefore $q$ is an MDM by \pref{cr:quotient_satisfies_}.
\end{proof}

As an application of \pref{th:base_change}, we obtain the following corollary.

\begin{corollary}\label{cr:base_change}
Let 
$f \colon X \to U $
 be an MDM and
$g \colon T \to U$ 
be a morphism between quasi-projective varieties.
Let $W \coloneqq X \times_{U}  T$ be as in \pref{eq:flat_diagram}.
Assume the condition $(1)$ and $(2)$ in \pref{th:base_change} and the following $(3')$.
\begin{enumerate}
\item[(3')] $g$ is flat and proper.
\end{enumerate}
Then $q$ is an MDM.
\end{corollary}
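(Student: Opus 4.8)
The plan is to deduce condition $(3)$ of \pref{th:base_change} from the hypotheses $(1)$, $(2)$ and $(3')$, and then to invoke \pref{th:base_change} directly. Since $(1)$ and $(2)$ are assumed, it suffices to verify $(3)$: that $\N^1(W/T)_{\bQ} \simeq \Pic(W/T)_{\bQ}$ and that the natural map $g^* f_* L \to q_* p^* L$ is surjective for every line bundle $L$ on $X$.

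The surjectivity of $g^* f_* L \to q_* p^* L$ is the easy half. Since $f$ is projective (in particular quasi-compact and quasi-separated) and $g$ is flat, the flat base change theorem applied to the cartesian square \pref{eq:flat_diagram} shows that $g^* f_* L \to q_* p^* L$ is in fact an isomorphism, hence surjective. Here only the flatness of $g$ is used, and the properness plays no role.

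The harder half is the isomorphism $\N^1(W/T)_{\bQ} \simeq \Pic(W/T)_{\bQ}$. The natural surjection $\Pic(W/T)_{\bQ} \to \N^1(W/T)_{\bQ}$ always holds, so I would establish injectivity. The first point is that $g$ is \emph{surjective}: being flat and of finite type it is an open map, so $g(T)$ is open, while being proper it is closed, so $g(T)$ is closed; as $U$ is a variety, hence connected, and $T \neq \emptyset$, we get $g(T) = U$. This is precisely where both flatness and properness are needed. Now take a divisor $D$ on $W$ with $D \equiv_T 0$. By $(2)$ we may write $D \sim_{\bQ, T} p^* D_X$ for some $D_X$ on $X$, so that $p^* D_X \equiv_T 0$. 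For any complete curve $C \subset X$ contracted by $f$, its image $f(C) = u$ is a closed point, and by surjectivity of $g$ we may pick $t \in g^{-1}(u)$. The fibre $W_t = q^{-1}(t)$ is canonically identified via $p$ with $X_u = f^{-1}(u)$, so $C$ lifts to a complete curve $\tilde C \subset W_t \subset W$ with $p_* \tilde C = C$ and $q(\tilde C) = \{t\}$. The projection formula then gives
\[
D_X . C = p^* D_X . \tilde C = 0,
\]
the last equality because $\tilde C$ is contracted by $q$ while $p^* D_X \equiv_T 0$. Hence $D_X \equiv_U 0$, and since $f$ is an MDM (so $\Pic(X/U)_{\bQ} \simeq \N^1(X/U)_{\bQ}$ by condition $(2)$ of \pref{df:MDM}) we get $D_X \sim_{\bQ, U} 0$, whence $D \sim_{\bQ, T} p^* D_X \sim_{\bQ, T} 0$. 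This proves injectivity, so $(3)$ holds and \pref{th:base_change} applies, yielding that $q$ is an MDM.

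The main obstacle is this numerical-equivalence half. Its resolution rests on the two structural facts unlocked by $(3')$: flatness simultaneously supplies the base-change isomorphism and, together with properness and the connectedness of $U$, forces $g$ to be surjective. Surjectivity is exactly what allows one to reach every $f$-contracted curve from a fibre of $q$ and thereby transport numerical triviality from $W/T$ back to $X/U$, where the MDM hypothesis on $f$ can finally be used.
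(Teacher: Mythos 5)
Your proof is correct and takes essentially the same approach as the paper's own (first) proof: you verify condition $(3)$ of \pref{th:base_change} — flat base change gives the surjectivity (indeed isomorphism) of $g^* f_* L \to q_* p^* L$, and lifting $f$-contracted curves $C \subset X_u$ to curves $C \times \{t\}$ in fibres of $q$ transfers numerical triviality back to $X/U$, where $\Pic(X/U)_{\bQ} \simeq \N^1(X/U)_{\bQ}$ finishes the argument — and then apply the theorem. If anything you are slightly more careful than the paper, which tacitly picks $t \in g^{-1}(u)$ without comment; your observation that flatness plus properness (plus connectedness of $U$) forces $g$ to be surjective justifies that step.
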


\begin{proof}
By the assumption, we have the following diagram.
\begin{align}\label{eq:diagram_pic}
\xymatrix{
\Pic(W/T)_{\bQ} \ar[d]_{\varphi} & \Pic(X/U)_{\bQ} \ar@{->>}[l]_{p^*} \ar@{=}[d] \\

\N ^ {1} (W/T)_{\bQ}                     &    \N^1 (X/U)_{\bQ} \ar@{->>}[l]_{\overline {p^*}}

}
\end{align}
The horizontal map $\overline{p^*}$ is the natural surjection induced by $p^*$.
We will check that $\varphi$ is an isomorphism.
For this,
we claim that $\overline {p^*}$ is an isomorphism. 
For this, take $[L] \in \N^1(X/U) _{\bQ} $  such that $\overline {p^*}([L]) = 0$.
Let $C$ be a curve on $X$ such that $f(C) = u \in U $, and $t \in g^{-1} (u) \subset T$.
Then the curve $C \times \{t\}$ is contained in $W$.
We obtain 
$
L.C =  p_1 ^* (L).( C \times \{t \}) = 0.
$
This implies $L \equiv_{U} 0$, and hence $\overline {p^*}$ is injective.
Then by the diagram, $\varphi$ is an isomorphism.
The second condition of $(3)$ easily follows from the flatness of $g$.
In fact, the natural map is an isomorphism under the assumption.
\end{proof}

We give another proof of \pref{cr:base_change}.
It is given by investigating the geometry of $W$, 
and we prove that $q$ satisfies the conditions of \pref{df:MDM}.
In the second proof of \pref{cr:base_change} below, we show that the relative movable cone of $W/T$ and that of $X/U$ coincide under the isomorphism $p^*$ in \pref{eq:diagram_pic}.

\begin{proof}[The second proof of \pref{cr:base_change}]
First we prove that all the maps in \pref{eq:diagram_pic} are isomorphisms as in the first proof.
We consider the nef cones and movable cones.
Note that we can see that a line bundle $L$ on $X$ is nef over $U$ if and only if $p^* L$ is $q$-nef by the same argument of the proof of the injectivity of $\overline {p^*}$ in the first proof.
Then we obtain $\Nef (W/T) = \Nef (X/U)$ via the isomorphism $\varphi$.
Moreover, we show that $L$ is $f$-semiample (respectively, $f$-movable) if and only if $p^* (L)$ is $q$-semiample (resp., $q$-movable).
Consider the following map 
\begin{align*}
\alpha_{p^* (L)/T} \colon q^* q_* (p^* (L)) \to p^* L .
\end{align*}
By the flat base change theorem, we have the natural isomorphism
$q^* {q_ *} (p^* (L)) \simeq p^* (f^* f_* (L))$.
This implies that $\alpha_{p^* (L)/T}$ coincides with the map $p^ * (\alpha_{L/U})$.
Hence we obtain
\begin{align*}
\Supp (\coker (\alpha_{p^* (L)/T})) = \Supp (\coker (p^* (\alpha_{L/U})) 
= p^{-1}(\Supp (\coker (\alpha_{L/U} ))),
\end{align*}
where we can check the second equality applying Nakayama's lemma to each stalk of the sheaves.
Then it follows that
$\codim_X (\Supp (\coker (\alpha_L))) = \codim_W (\Supp(\coker (\alpha_{p^* (L)/T})))$
by the flatness of $p$,
and hence we conclude that 
$L$ is $f$-semiample (resp., $f$-movable) if and only if $p^* (L)$ is $q$-semiample (resp., $q$-movable).
Note that
since $\Nef(X/U)$ is a polyhedral cone generated by finitely many $f$-semiample divisors,
$\Nef(W/T) = \overline {p^*}(\Nef(X/U))$ is also generated by finitely many $q$-semiample divisors.

For an SQM
$r \colon X \dashrightarrow X'$ over $U$,
we show that
$\tilde{r} \colon X \times_{U} T \dashrightarrow W' \coloneqq X' \times _U T  $
is an SQM over $T$.
First, we prove that 
$\tilde{r} $ is small.
Let $F \subset X$ be a closed subset such that $r$ is isomorphism in $X/F$.
Note that $\codim_X (F) \ge 2 $ holds.
Then $\tilde{r}$ is an isomorphism outside the closed subset $\tilde{F} \coloneqq F \times_U T \subset W$. 
Moreover, we obtain that $\codim_W (\tilde{F}) \ge 2 $ by considering the dimension of the fibre of the natural map $\tilde{F} \to F$ since $p$ is surjective and flat.
Next, we see that $W'$ is $\bQ$-factorial.
Let $D'$ be a Weil divisor on $W$.
Then $D \coloneqq \tilde{r} ^{-1} (D')$ is a $\bQ$-Cartier divisor since $W$ is 
$\bQ$-factorial.
Then by the assumption (2), there exist $\bQ$-divisors 
$D_X \in \Pic(X)_{\bQ}$, 
$D_T \in \Pic(T)_{\bQ}$,
and
$E_i \in \Pic(U)_{\bQ}$
satisfying 
\begin{align*}
D + h^* (E_1) \simeq p^* ( (D_X) + f^* (E_2)) + q^* (D_T + g^* (E_3)),
\end{align*}
where we define $h \coloneqq p \circ f$.
This implies that
\begin{align*}
D' + {h' }^* (E_1) \simeq {p'}^* ( (r_{*}(D_X)) + {f'}^* (E_2)) + {q'}^* (D_T + g^* (E_3)),
\end{align*}
where 
$ {p'} \colon W' \to X' $,
${q'} \colon W' \to T$, and 
$f' \colon X' \to U$
are the natural morphisms.
Hence $D$ is $\bQ$-Cartier.
The above argument implies that $f'$ also satisfies the condition (1), (2) and (3) of the proposition.

Let
$r_i \colon X \dashrightarrow X_i $
($i=1 \dots k$)
be the all SQMs of $X$ over $U$.
Then
$
\tilde{r_i} \colon W \dashrightarrow W_i \coloneqq X_i \times_U T
$
is SQMs of $W$ over $T$.
Combining the above arguments, we obtain
\begin{align*}
\Mov(W/T) \simeq p^* (\Mov (X/U) )  \\
           = \bigcup_{i} p^* (r_i^*(\Nef (X_i / U)))  \\
           = \bigcup_{i} \tilde{r_i} ^ *  ({p_i} ^* (\Nef(X_i /U)) ) \\
           \simeq \bigcup_{i} \tilde{r_i}^ * (\Nef (W_i /U) )
\end{align*}

Thus we conclude that $q$ is an MDM.
\end{proof}

\begin{remark}
In \pref{th:base_change} if $U = \Spec (\bC)$, so that $X$ is an MDS, then the condition $(2)$ is automatically satisfied by \cite[III EXERCISE 12.6]{MR0463157}.
\end{remark}

On the other hand, there exists an MDM whose special fibre is not an MDS as in the following example.
Hence not an arbitrary base change of MDM is an MDM.
The example violates (2).

\begin{example}\label{eg:K3_fibre}
By \cite[Theorem 4]{MR168559}, there exists 
a smooth hypersurface $S_F \subset \bP ^ 3 $ 
defined by a homogeneous quartic polynomial
$F(x, y, z, w)$ such that $\# \Aut (S_F) = \infty$.
Note that $S_F$ is not an MDS by \cite{MR2660680}.

Take a sufficiently general quartic polynomial $G(x, y, z, w)$ such that 
$S_F \cap S_G$ is a smooth curve.
Let $\mu \colon X \to \bP^3$ be the blow up of $\bP^3$ along $S_F \cap S_G$.
Then $X$ is smooth and there is a morphism 
$f \colon X \to \bP^1$ such that the fibre of a point 
$[a:b] \in \bP^1$ 
is the hypersurface 
$S_{aF-bG} \subset \bP^3 $.

We show that $f$ is an MDM. To see this, it is sufficient to show that $X$ is an MDS because of \pref{pr:betweeen_MDS}.
Let $E$ be the $\mu$-exceptional divisor.
It follows that $ \Pic(X) = \mu ^* (\Pic(\bP ^3)) \oplus \bZ E$, and we can check that
$\Nef(X)$ is generated by $\mu ^* (\cO_{\bP ^3}(1))$ and 
$\mu ^* (\cO_{\bP ^3}(1)) - (1/4) E$.
Note that some positive multiple of the second generator is the class of the fibre of $f$.
In particular, $\Nef(X)$ is generated by semiample divisors.
For $\varepsilon \in \bQ_{>0}$, it is obvious that 
$\mu ^* (\cO_{\bP ^3}(1)) + \varepsilon E$ is not movable.
On the other hand,
$\mu ^* (\cO_{\bP ^3}(1)) - (1/4) E$ is not big since its multiple is pull back of divisor on $\bP^1$,
so that a divisor $D$ in the outside of $\Nef(X)$ with respect to the ray generated by $\mu ^* (\cO_{\bP ^3}(1)) - (1/4) E$ is not peseudo-effective.
Then we obtain $\Nef(X) = \Mov(X)$, so that $X$ is an MDS.
\end{example}

Below is an example of a birational contraction  which is not an MDM.

\begin{example}
Let $X$ be the projective cone over a smooth plane cubic curve $C$ defined by an equation
$
 F \lb x, y, z \rb \in \bC [ x, y, z ]
$.
By considering the blowing up at the vertex of the cone, we obtain the following birational contraction.
\begin{align*}
f \colon \Xtilde \coloneqq \bP _{ C } \lb \cO _{ C } \oplus \cO _{ C } ( 1 ) \rb\to X
\end{align*}

Note that
$\Pic(\tilde{X})_{\bQ}$ is not countable since 
$\Pic(\tilde{X}) \simeq \cO_{\tilde{X}}(1) \oplus \pi ^ * \Pic(C)$ and $C$ is an elliptic curve,
where $\pi \colon \tilde{X} \to C $ is the canonical projection 
and
$\cO_{\tilde{X}}(1)$ is the tautological line bundle.
On the other hand, $\Pic(X) \simeq \bZ$ holds.
Indeed, we can see that $ f^*(\Pic(X)) \simeq \Ker (s^*) \simeq \bZ$,
where $s \colon C \to \tilde{X}$ is the section of $\pi$ such that $s(C)$ is contracted by $f$
and $s^* : \Pic(\tilde{X}) \to \Pic(C)$ is the induced map.
Hence $\Pic(\tilde{X}/X)_{\bQ}$ is not countable,
and so it is not isomorphic to $\N^1 (\tilde{X}/X)_{\bQ} \simeq \bQ $.
Therefore $f$ is not an MDM.
\end{example}

\bibliographystyle{amsalpha}
\bibliography{bib}
\end{document}